\documentclass[12pt,a4paper]{amsart}
\usepackage[top=30truemm,bottom=30truemm,left=25truemm,right=25truemm]{geometry}
\usepackage{amsfonts,amssymb,amscd,amsmath,bbm,latexsym,amsbsy, mathdots, fancyhdr}
\usepackage[utf8]{inputenc}
\usepackage{calc}
\usepackage{accents}
\usepackage[demo]{graphicx}
\usepackage{caption}
\usepackage{subcaption}
\usepackage{cleveref}

\captionsetup[subfigure]{subrefformat=simple,labelformat=simple}

\usepackage{tikz}
	\usetikzlibrary{graphs,arrows,decorations.pathmorphing,backgrounds,positioning,fit,chains,matrix,scopes,decorations.markings}

\tikzset{-<-/.style={decoration={ 
  markings,
  mark=at position .5 with {\arrowreversed[line width = 0.5pt]{angle 90}}},postaction={decorate}}}
        
\tikzset{->-/.style={decoration={ 
  markings,
  mark=at position .5 with {\arrow[line width = 0.5pt]{angle 90}}},postaction={decorate}}}

\usepackage{tikz}

\numberwithin{equation}{section}
\theoremstyle{plain}
\newtheorem{thm}{Theorem}[section]

\newtheorem{prop}[thm]{Proposition}

\newtheorem{lem}[thm]{Lemma}
\newtheorem{cor}[thm]{Corollary}

\theoremstyle{remark}
\newtheorem{rema}[thm]{Remark}

\newcommand{\dbtilde}[1]{\accentset{\approx}{#1}}

\newcommand{\mydots}{\hbox to 1em{.\hss.\hss.}}

\newcommand{\bA}{\mathbf{A}}
\newcommand{\bc}{\mathbf{c}}
\newcommand{\bd}{\mathbf{d}}

\newcommand{\bz}{\mathbf{z}}

\newcommand{\bnull}{{\mathbf{0}}}
\newcommand{\C}{{\mathbb C}}

\newcommand{\N}{{\mathbb N}}
\newcommand{\cA}{{\mathcal A}}

\newcommand{\cB}{{\mathcal B}}

\newcommand{\cF}{{\mathcal F}}

\newcommand{\cG}{{\mathcal G}}

\newcommand{\cI}{{\mathcal I}}

\newcommand{\cJ}{{\mathcal J}}

\newcommand{\cM}{{\mathcal M}}

\newcommand{\cX}{{\mathcal X}}
\newcommand{\cY}{{\mathcal Y}}
\newcommand{\cW}{{\mathcal W}}
\newcommand{\cZ}{{\mathcal Z}}

\newcommand{\eps}{\varepsilon}

\newcommand{\field}{{\mathbb K}}

\newcommand{\gfrak}{{\mathfrak g}}

\newcommand{\glfrak}{{\mathfrak{gl}}}

\newcommand{\gr}{{\mathrm{gr}}}
\newcommand{\hfrak}{{\mathfrak h}}

\newcommand{\Hom}{{\mathrm{Hom}}}

\newcommand{\id}{{\mbox{id}}}
\newcommand{\im}{{\mbox{Im}}}

\newcommand{\kfrak}{{\mathfrak k}}
\newcommand{\kow}{{\varDelta}}

\newcommand{\nfrak}{{\mathfrak n}}

\newcommand{\ob}{{\overline{b}}}

\newcommand{\ov}{{\overline{v}}}

\newcommand{\one}{\mathbf{1}}

\newcommand{\ot}{\otimes}

\newcommand{\qfield}{k}

\newcommand{\Q}{\mathbb Q}

\newcommand{\rank}{\mathrm{rank}}

\newcommand{\rt}{\mathrm{rt}}

\newcommand{\slfrak}{{\mathfrak{sl}}}
\newcommand{\sofrak}{{\mathfrak{so}}}

\newcommand{\ttJ}{\dbtilde{J}}
\newcommand{\ttL}{\dbtilde{L}}
\newcommand{\ttN}{\dbtilde{N}}

\newcommand{\Uq}{U}

\newcommand{\uqg}{{U_q(\mathfrak{g})}}

\newcommand{\vep}{\varepsilon}

\newcommand{\Z}{{\mathbb Z}}

\begin{document}
\title{Representation theory of very non-standard quantum ${\sofrak}(2N-1)$}

\author{Stefan Kolb}
\address{Stefan Kolb and Jake Stephens, School of Mathematics, Statistics and Physics, Newcastle University, Herschel Building, Newcastle upon Tyne NE1 7RU, UK}
\email{stefan.kolb@newcastle.ac.uk}
\email{jakehenrystephens@gmail.com}

\author{Jake Stephens}

\subjclass[2020]{17B37}

\keywords{Quantum groups, quantum symmetric pairs, Gelfand-Tsetlin bases}

\begin{abstract}
  We classify the finite dimensional representations of the quantum symmetric pair coideal subalgebra $\cB_\bc$ of type $DII$ corresponding to the symmetric pair $(\sofrak(2N),\sofrak(2N-1))$. For $\cB_\bc$ defined over an arbitrary field $\qfield$ and $q\in \qfield$ not a root of unity we establish a one-to-one correspondence between finite dimensional, simple $\cB_\bc$-modules and dominant integral weights for $\sofrak(2N-1)$. We use specialisation to show that the category of finite dimensional $\cB_\bc$-modules is semisimple if $\mathrm{char}(\qfield)=0$ and $q$ is transcendental over $\Q$. In this case the characters of simple $\cB_\bc$-modules are given by Weyl's character formula. This means in particular that the quantum symmetric pair of type $DII$ can be used to obtain Gelfand-Tsetlin bases for irreducible representations of the Drinfeld-Jimbo quantum group $U_q(\sofrak(2N))$.
\end{abstract}  
\maketitle
\section{Introduction}
\subsection{Motivation and background} Let $\gfrak_n$ denote the general linear Lie algebra $\glfrak(n)$ of complex $n\times n$ matrices or the complex orthogonal Lie algebra $\sofrak(n)$. There are natural embeddings of Lie algebras $\gfrak_n\hookrightarrow \gfrak_{n+1}$.
In their seminal work \cite{a-GT50a}, \cite{a-GT50b} I.~Gelfand and M.~Tsetlin developed explicit formulas for representations of $\gfrak_n$ in terms of combinatorial data. These Gelfand-Tsetlin patters can be interpreted as sequences $\lambda_n, \lambda_{n-1}, \dots, \lambda_2, (\lambda_1)$,
where $\lambda_\ell$ is a dominant integral weight for $\gfrak_\ell$, such that the simple $\gfrak_\ell$-module of highest weight $\lambda_\ell$ is contained inside the simple $\gfrak_{\ell+1}$-module of highest weight $\lambda_{\ell+1}$ under restriction. As restriction from $\gfrak_{\ell+1}$ to $\gfrak_\ell$ is multiplicity free, the sequence $\lambda_n, \lambda_{n-1}, \dots, \lambda_2, (\lambda_1)$ determines a basis of the simple $\gfrak_n$-module of highest weight $\lambda_n$ up to scalar factors.

With the advent of quantum groups in the 1980s, it was natural to ask if similar bases exist for representations of Drinfeld-Jimbo quantised enveloping algebras $U_q(\gfrak_n)$. For $\gfrak_n=\glfrak(n)$ this was settled early on \cite{a-Jimbo3}, \cite{a-UST90}. However, for orthogonal Lie algebras there is no embedding of $U_q(\sofrak(n-1))$ into $U_q(\sofrak(n))$ and therefore Gelfand-Tsetlin bases for $U_q(\sofrak(n))$ were long deemed non-existent. This observation was the starting point for A.~Gavrilik and A.~Klimyk's invention of non-standard $U'_q(\sofrak(n))$ in \cite{a-GavKlim91}. These algebras support inclusions $U'_q(\sofrak(n-1))\subset U'_q(\sofrak(n))$ and Gavrilik and Klimyk constructed finite dimensional, irreducible representations of $U'_q(\sofrak(n))$ in terms of Gelfand-Tsetlin patterns for $\sofrak(n)$. 

M.~Noumi observed in \cite{a-Noumi96} that Gavrilik and Klimyk's $U'_q(\sofrak(n))$ can be realised as a (right) coideal subalgebra of Drinfeld-Jimbo $U_q(\slfrak(n))$, and he established the pair of algebras $(U_q(\slfrak(n)), U'_q(\sofrak(n)))$ as one of the first examples of a quantum symmetric pair. 
Noumi and collaborators constructed quantum symmetric pairs for all classical symmetric Lie algebras, that is, for all involutive Lie algebra automorphisms $\theta:\gfrak\rightarrow \gfrak$ with $\gfrak$ of classical type \cite{a-Noumi96}, \cite{a-NS95}, \cite{a-NDS97}. Their construction proceeds case by case and relies on explicit solutions of the reflection equation.

Independently, G.~Letzter devised a uniform construction of quantum symmetric pairs for all semisimple symmetric Lie algebras \cite{a-Letzter99a}. Recall that involutive automorphisms $\theta$ of a semisimple, complex Lie algebra $\gfrak$ are classified up to conjugation in terms of Satake diagrams $(X,\tau)$, see also Section \ref{sec:symSatake}. Here $X$ is a subset of the set of nodes of the Dynkin diagram of $\gfrak$ and $\tau$ is a diagram automorphism preserving the subset $X$. For each Satake diagram, Letzter defined a right coideal subalgebra $\cB\subset \uqg$ which is a quantum group analogue of $U(\kfrak)$ for $\kfrak=\{x\in \gfrak\,|\,\theta(x)=x\}$. We refer to $(\uqg,\cB)$ as a quantum symmetric pair (QSP) and to the algebra $\cB$ as a QSP-coideal subalgebra of $\uqg$. For $\gfrak$ of classical type, Letzter's construction reproduces the coideal subalgebras constructed by Noumi and collaborators, see \cite[Section 6]{a-Letzter99a}.

The pair of Lie algebras $(\sofrak(n),\sofrak(n-1))$ is a symmetric pair. If $n=2N$ is even, then the corresponding Satake diagram is of type $DII$, see \Cref{fig:DII}.
\begin{figure}
\centering
\begin{subfigure}[t]{.5\textwidth}
  \centering
    \begin{tikzpicture}
			[scale=0.7, white/.style={circle,draw=black,inner sep = 0mm, minimum size = 3mm},
			black/.style={circle,draw=black,fill=black, inner sep= 0mm, minimum size = 3mm}, every node/.style={transform shape}]	
			\node[white] (first) [label = below:{\scriptsize $1$}] {};		
			\node[black] (second) [right= of first] [label = below:{\scriptsize $2$}]  {}
				edge (first);
			\node[black] (third) [right = 1.5cm of second] {}
				edge [dashed] (second);
			\node[black] (fourth) [above right = 0.5cm of third] [label = above:{\scriptsize $N-1$}] {}
				edge (third);
			\node[black] (fifth) [below right = 0.5cm of third] [label = below:{\scriptsize $N$}] {}
				edge (third);		
		\end{tikzpicture}
                \caption{$n=2N$, type $DII$}\label{fig:DII}
\end{subfigure}%
\begin{subfigure}[t]{.5\textwidth}
  \centering
   \begin{tikzpicture} 
			[scale=0.7, white/.style={circle,draw=black,inner sep = 0mm, minimum size = 3mm},
			black/.style={circle,draw=black,fill=black, inner sep= 0mm, minimum size = 3mm}, every node/.style={transform shape}] 
			\node[white] (first)  [label = below:{\scriptsize $1$}] {};		
			\node[black] (second) [right=of first] [label = below:{\scriptsize $2$}]  {}			
				edge (first);
			\node[black] (third) [right = 1.5cm of second]  {}
				edge [dashed] (second);
			\node[black] (fourth) [right = of third] [label = below:{\scriptsize $N$}] {}
			edge [double equal sign distance, -<-] (third);
                        \phantom{	\node[black] (fifth) [below right = 0.5cm of third] [label = below:{\scriptsize $N$}] {}
				edge (third);	}
   \end{tikzpicture}
   
     \caption{$n=2N+1$, type $BII$}\label{fig:BII}
  \end{subfigure}
\caption{Satake diagrams for $(\sofrak(n),\sofrak(n-1))$}
\label{fig:Satake}
\end{figure}
Quantum symmetric pairs provide a corresponding coideal subalgebra $\cB_{2N-1}\subset U_q(\sofrak(2N))$ which is a quantum group analogue of $U(\sofrak(2N-1))$. The algebra $\cB_{2N-1}$ is different from Gavrilik and Klimyk's non-standard $U_q'(\sofrak_{2N-1})$ and hence we also refer to it as `very non-standard' quantum $\sofrak(2N-1)$. By Letzter's construction we obtain a chain of algebras
\begin{align}\label{eq:chain1}
   \dots \subset U_q(\sofrak(2N-2)) \subset \cB_{2N-1} \subset U_q(\sofrak(2N))\subset \dots 
\end{align}
In the odd case $n=2N+1$, the Satake diagram corresponding to the symmetric pair $(\sofrak(2N+1),\sofrak(2N))$ is of type $BII$, see \Cref{fig:BII}. Again, quantum symmetric pairs provide a coideal subalgebra $\cB_{2N}\subset U_q(\sofrak(2N+1))$ which is a quantum group analogue of $U(\sofrak(2N))$, and we obtain a chain of algebras
\begin{align}\label{eq:chain2}
   \dots \subset U_q(\sofrak(2N-1)) \subset \cB_{2N} \subset U_q(\sofrak(2N+1))\subset \dots 
\end{align}
It is natural to expect that the chains of algebras \eqref{eq:chain1} and \eqref{eq:chain2} lead to Gelfand-Tsetlin bases for representations of $U_q(\sofrak(n))$. To this end, we need to understand the representation theory of the QSP-coideal subalgebras $\cB_{2N-1}$ and $\cB_{2N}$. The two situations are quite different as they correspond to different Satake diagrams. In the present paper we consider the quantum symmetric pair $(U_q(\sofrak(2N)),\cB_{2N-1})$.

\subsection{Representations of QSP-coideal subalgebras}
 If $(\gfrak, \kfrak)$ is a symmetric pair with semisimple $\gfrak$ then $\kfrak$ is a reductive Lie algebra. 
 Hence it is natural to expect that any QSP-coideal subalgebra $\cB$ has a nice category of finite-dimensional representations. However, to this date, finite dimensional representations of $\cB$ have been classified only for a fairly limited class of examples. To put the results of the present paper into context, we provide a brief overview of known results.

 In type $AI$, corresponding to the symmetric pair $(\slfrak(n),\sofrak(n))$, finite dimensional representations of $\cB$ were classified inductively by N.~Iorgov and A.~Klimyk \cite{a-IK05}. There appear two types of irreducible representations, those of classical type which allow specialisation $q\rightarrow 1$ and those of nonclassical type which don't. The classification relies on the explicit realisation of these representations given in \cite{a-GavKlim91}. These realisations involve square roots of $q$-numbers and hence \cite{a-IK05} work over the field $\qfield=\C$ with $q\in \C$ not a root of unity. The induction argument in \cite{a-IK05} also shows that the category of finite dimensional $\cB$-modules is semisimple in this case.

 The classification in type $AI$ was revisited by H.~Wenzl who developed a Verma module approach in this case \cite{a-Wenzl20}. In particular, Wenzl identifies large proper submodules of Verma modules and can show by an $\slfrak(2)$-argument similar to the classical case that the corresponding quotients are finite dimensional if the highest weight is dominant integral \cite[Section 5.3]{a-Wenzl20}. This is possible because the rank two case provides $\slfrak(2)$-triples in type $AI$.

In type $AII$, corresponding to the symmetric pair $(\slfrak(2N)),\mathfrak{sp}(2N))$, irreducible representations of $\cB$ were classified by A.~Molev in \cite{a-Molev06}. Molev works essentially with Noumi's construction of $\cB$ and again follows a Verma module approach. To show that simple quotients of Verma modules for dominant integral weights are finite dimensional, Molev uses Gelfand-Tsetlin bases for $U_q(\glfrak(2N))$ to construct non-vanishing homomorphisms from Verma modules for $\cB$ to finite dimensional $U_q(\glfrak(2N))$-modules. Moreover, Molev uses specialisation to show that in type $AII$ the characters of simple $\cB$-modules are given by Weyl's character formula. 

A unifying approach to the classification of finite dimensional $\cB$-modules was proposed by H.~Watanabe in \cite{a-Watanabe21}. To include all types, he works over the algebraic closure $\overline{\C(q)}$. Watanabe defines a class of modules of classical weight which excludes the nonclassical modules of type $AI$ and which is not always closed under tensor products with 1-dimensional $\uqg$-modules. For this class Watanabe develops a highest weight theory. To make this theory work, one needs to establish a triangular decomposition of $\cB$ in terms of analogues of root vectors in $\cB$, which is quite challenging in general. In \cite{a-Watanabe21} this program is performed for several classes of examples, reproving the results of \cite{a-Molev06}, parts of the classification in \cite{a-IK05} and \cite{a-Wenzl20}, and leading to a classification of simple modules of classical weight in type AIII corresponding to the symmetric pairs $(\slfrak(2n),\mathfrak{s}(\glfrak(n)\oplus \glfrak(n)))$ and $(\slfrak(2n+1),\mathfrak{s}(\glfrak(n)\oplus \glfrak(n+1)))$.
To show that simple quotients of Verma modules for dominant integral weights are finite dimensional, Watanabe uses specialisation early on. Specialisation is also employed to show that any finite dimensional $\cB$-module is completely reducible in those cases.
\subsection{Results}
In the present paper, we classify all finite dimensional simple $\cB$-modules in type $DII$ corresponding to the symmetric pair $(\sofrak(2N),\sofrak(2N-1))$. 
We aim to emulate the approach in Jantzen's textbook \cite[Chapter 5]{b-Jantzen96}. In particular, we work over any field $\qfield$ with $q\in \qfield$ not a root of unity. We follow the conventions in \cite{a-Kolb14} and write $\cB=\cB_\bc$ to indicate the parameter dependence of the QSP-coideal subalgebra of type $DII$.

Let $\Uq^0\subset U_q(\sofrak(2N))$ be the subalgebra generated by the group-like elements $K_i^{\pm 1}$ for $i=1,\dots,N$. In type $DII$, the number of black dots of the Satake diagram $|X|=N-1$ coincides with the rank of $\kfrak=\sofrak(2N-1)$. Hence, the QSP-coideal subalgebra $\cB_\bc$ has a natural analogue of a Cartan subalgebra, namely $\Uq^0_X:=\cB_\bc \cap \Uq^0=\qfield\langle K_i^{\pm 1}\,|\,i\in X\rangle$. This only happens for symmetric pairs of type $AII$, $DII$ and $EIV$ and simplifies the situation significantly.

The algebra $\cB_\bc$ is generated by a single element $B_1$ and the subalgebra $\cM_X=U_q(\gfrak_X)$ corresponding to the set $X$ of black dots in the Satake diagram. Taking inspiration from the classical case, we define root vectors
for $\cB_\bc$ in terms of the generator $B_1$ and the Lusztig braid automorphism $T_i$ for $i\in X$. The root vectors are ordered by a choice of longest element in the Weyl group $W$. Moreover, they split up into positive and negative root vectors depending on their behaviour under conjugation by $U^0_X$. Let $\cB_\bc^+$ and $\cB_\bc^-$ be the span of all ordered monomials in the positive and negative root vectors of $\cB_\bc$, respectively. We obtain a triangular decomposition for $\cB_\bc$.

\medskip

\noindent{\bf Theorem A.} (Theorem \ref{thm:PBW-Bc}, Corollary \ref{cor:Bc-triang})
\textit{The multiplication map
\begin{align*}
  \cB_\bc^- \ot U^0_X \ot \cB_\bc^+ \rightarrow \cB
\end{align*}
is a linear isomorphism. Moreover, the ordered monomials in the positive (negative) root vectors form a basis of $\cB_\bc^+$ ($\cB_\bc^-$).}

\medskip

Any finite dimensional $\cB_\bc$-module $V$ can be considered as a $\cM_X$-module by restriction, and hence the action of $U^0_X$ on $V$ is diagonalisable. The notion of type of a finite dimensional $\cM_X$-module \cite[5.2]{b-Jantzen96} lifts to a notion of type of a finite dimensional $\cB_\bc$-module. As in the quantum group case, it suffices to consider finite dimensional $\cB_\bc$-modules of type $\mathbf 1$, as all others can be obtained by tensoring by a one-dimensional $U_q(\sofrak(2N))$-module, see Proposition \ref{prop:cat-equiv}.

Let $P_{2N-1}$ denote the weight lattice of $\sofrak(2N-1)$ and let $P_{2N-1}^+$ be the subset of dominant integral weights. Finite dimensional $\cB_\bc$-modules decompose into weight spaces and hence contain a highest weight vector of weight $\lambda\in P_{2N-1}$ on which $\cB_\bc^+$ acts trivially. We need to show that $\lambda$ is dominant. Let $\gamma_1,\dots,\gamma_{N-1}$ be the simple roots of $\sofrak(2N-1)$ with $\gamma_{N-1}$ being the short root. For $i=1,\dots,N-2$ the triple $\{E_{i+1}, F_{i+1}, K_{i+1}^{\pm 1}\} \subset \cB_\bc$ generates a copy of $U_q(\slfrak(2))$ corresponding to the root $\gamma_i$, and hence $\lambda(h_{\gamma_i})\ge 0$ where $h_{\gamma_i}$ is the corresponding coroot. We denote the positive and negative root vectors corresponding to the short root $\gamma_{N-1}$ by $B_{\beta_{N-1}}$ and $B_{\beta_N}$, respectively. We show in Proposition \ref{prop:Uqsl2} that the triple $\{B_{\beta_{N-1}}, B_{\beta_N}, (K_N K_{N-1}^{-1})^{\pm 1}\}$ acts on a suitable subspace $H(V)\subset V$ as a copy of $U_q(\slfrak(2))$. Moreover, $H(V)$ contains the highest weight vector. This implies that $\lambda(h_{\gamma_{N-1}})\ge 0$ and hence $\lambda\in P^+_{2N-1}$ is dominant.

Using the root vectors for $\cB_\bc$ we define a Verma module $M(\lambda)$ for each $\lambda\in P_{2N-1}$, see Section \ref{sec:Verma}. The Verma module $M(\lambda)$ is generated by a highest weight vector $v_\lambda$ and we show in Theorem \ref{thm:PBWMl} that the linear map
\begin{align*}
  \cB_\bc^-\rightarrow M(\lambda), \qquad b\mapsto bv_\lambda
\end{align*}
is a linear isomorphism. 
Any finite dimensional simple $\cB_\bc$-module is a quotient of $M(\lambda)$ by a proper maximal submodule $N(\lambda)\subset M(\lambda)$ for some $\lambda\in P^+_{2N-1}$. To complete the classification of finite dimensional, simple $\cB_\bc$-modules, it remains to show that
\begin{align*}
  L(\lambda) = M(\lambda)/N(\lambda) 
\end{align*}
is finite dimensional for all $\lambda\in P_{2N-1}^+$. We first consider the submodule
\begin{align*}
  \widetilde{N}(\lambda) = \sum_{i=1}^{N-2} \cB_\bc F_{i+1}^{n_i+1}v_\lambda +\cB_\bc B_{\beta_{N}}^{n_{N-1}+1}v_\lambda
\end{align*}
with $n_i=\lambda(h_{\gamma_i})\ge 0$. As in the quantum group case, we show that 
$\widetilde{N}(\lambda) \subset M(\lambda)$ is a proper submodule. However, it is unclear how to adapt the Weyl group invariance argument from \cite[5.9]{b-Jantzen96}, since $\{B_{\beta_{N-1}}, B_{\beta_N}, (K_N K_{N-1}^{-1})^{\pm 1}\}$ acts as an $\slfrak(2)$-triple only on the subspace $H(V)$. Instead we consider the submodule
\begin{align*}
  \ttN(\lambda) = \sum_{i=1}^{N-2} \cB_\bc F_{i+1}^{n_i+1}v_\lambda +\cB_\bc F_N^{n_N+1}v_\lambda
\end{align*}
with $n_\lambda=\lambda(h_{\gamma_{N-2}} + h_{\gamma_{N-1}})$ which contains $\widetilde{N}(\lambda)$, see Remark \ref{rem:NtNtt}. By Proposition \ref{prop:FNvl} we know that $\ttN(\lambda)$ is also a proper submodule of $M(\lambda)$. Using a filtered-graded argument, we show in Section \ref{sec:filt-grad} that $M(\lambda)/\ttN(\lambda)$ is finite dimensional. This gives us the desired classification.

\medskip

\noindent{\bf Theorem B.} (Theorem \ref{thm:Llambda}) \textit{Let $\qfield$ be any field and $q\in \qfield$ not a root of unity.
  For each $\lambda\in P_{2N-1}^+$ the simple $\cB_\bc$-module $L(\lambda)$ has finite dimension. Each finite-dimensional simple $\cB_\bc$-module (of type ${\bf 1}$) is isomorphic to exactly one $L(\lambda)$ with $\lambda\in P_{2N-1}^+$.}

\medskip

We would like to know the characters of the simple $\cB_\bc$-modules $L(\lambda)$ and we would like to show that all finite dimensional $\cB_\bc$-modules are semisimple. At this point, we can only achieve this via specialisation, as in \cite[5.15, 5.17]{b-Jantzen96}, \cite{a-Molev06}, \cite{a-Watanabe21}. We obtain the following result.

\medskip

\noindent{\bf Theorem C.} (Corollary \ref{cor:Weyl}, Theorem \ref{thm:semisimple}) Assume that $\qfield$ is a field of characteristic zero and that $q$ is transcendental over $\Q$. Then every finite dimensional $\cB_\bc$-module is semisimple. Moreover, for any $\lambda\in P^+_{2N-1}$ the submodules $N(\lambda), \widetilde{N}(\lambda)$ and $\ttN(\lambda)$ coincide, and the dimensions of the weight spaces $L(\lambda)_\mu$ for $\mu\in P_{2N-1}$ are given by Weyl's character formula.

\medskip

Theorem C implies in particular that finite dimensional $U_q(\sofrak(2N))$-modules decompose into irreducible $\cB_\bc$-modules as in the classical case $q=1$. And similarly, finite-dimensional $\cB_\bc$-modules decompose into irreducible $U_q(\sofrak(2N-2))$-modules as in the classical case $q=1$. Hence, if $\mathrm{char}(\qfield)=0$ and $q$ is transcendental over $\Q,$ then the chain of inclusions \eqref{eq:chain1} provides a Gelfand-Tsetlin basis of any simple $U_q(\sofrak(2N))$-module, up to scalar factors.
\subsection{Very recent developments}
The theory of quantum symmetric pairs is presently developing at rapid speed. While we were putting final touches to the present paper, two preprints appeared which have bearing on our results. In \cite{a-Watanabe24p} H.~Watanabe proposed a notion of integrable modules for QSP-coideal subalgebras. He shows  that simple, integrable modules over QSP-coideal subalgebras of finite type are finite dimensional \cite[Section 5.1]{a-Watanabe24p}. In the spirit of our paper to emulate the constructions of \cite[Chapter 5]{b-Jantzen96} as closely as possible, it would be desirable to show that the quotient $M(\lambda)/\ttN(\lambda)$ for $\lambda\in P^+_{2N-1}$ is integrable in the sense of Watanabe's paper.

M.~Lu, R.~Yang and W.~Zhang proposed notions of root vectors and PBW bases for QSP-coideal subalgebras of finite type \cite{a-LuYangZhang24p}. In type $DII$ the root vectors given in \cite[3.7]{a-LuYangZhang24p} coincide with the root vectors given in Section \ref{sec:RV4Bc}. Moreover, the second part of Theorem A, Theorem \ref{thm:PBW-Bc}, is a special case of \cite[Theorem 4.4]{a-LuYangZhang24p}. We note that the definition of the root vectors in type $DII$ and the PBW-Theorem \ref{thm:PBW-Bc} already appeared in the second named author's PhD thesis \cite{phd-Stephens23}.

\medskip

\noindent {\bf Acknowledgements.} The first named author is grateful to Johnathan Brundan and Ben Webster for helpul comments at an early stage of this project.

\section{The symmetric pair $ (\sofrak(2N), \sofrak(2N-1))$}\label{sec:sym-pair}
In this preliminary section we identify the orthogonal Lie algebra $\sofrak(2N-1)$ explicitly as the Lie subalgebra  pointwise fixed under a maximally split involution on $\sofrak(2N)$. This allows us to identify simple root vectors of $\sofrak(2N-1)$ inside $\sofrak(2N)$, which will in turn inform the choice of root vectors in the quantum setting in Section \ref{sec:PBW}. 
\subsection{Symmetric pairs and Satake diagrams}\label{sec:symSatake}
Let $\gfrak$ be a semisimple complex Lie algebra with Cartan subalgebra $\hfrak$, root system $\Phi$, a fixed choice of simple roots $\Pi=\{\alpha_i\,|\,i\in I\}$, and corresponding triangular decomposition $\gfrak=\nfrak^+\oplus \hfrak \oplus \nfrak^-$. Let $e_i, f_i, h_i$ for $i\in I$ denote the Chevalley generators of $\gfrak$. For any subset $X\subset I$ let $\gfrak_X$ denote the Lie subalgebra of $\gfrak$ generated by $e_i, f_i, h_i$ for all $i\in X$. Let $Q_X=\sum_{i\in X}\Z \alpha_i$ be the corresponding root lattice and $Q_X^+=\sum_{i\in X}\N_0\alpha_i$ the positive cone. For $X=I$ we write $Q$ and $Q^+$ instead of $Q_I$ and $Q_I^+$, respectively.

Let $\theta:\gfrak\rightarrow \gfrak$ be an involutive Lie algebra automorphism, i.e.~$\theta^2=\id_\gfrak$, and $\kfrak=\{x\in \gfrak\,|\,\theta(x)=x\}$ the pointwise fixed Lie subalgebra of $\gfrak$. We refer to $(\gfrak,\kfrak)$ as a symmetric pair. Up to conjugation by a Lie algebra automorphism of $\gfrak$, the involution $\theta$ has the following properties, see \cite[1.13]{b-Dixmier96}, also \cite[Section 7]{MSRI-Letzter}:
\begin{enumerate}
  \item $\theta(\hfrak)=\hfrak$,
  \item There exists $X\subset I$ such that $\theta|_{\gfrak_X}=\id_{\gfrak_X}$,
  \item $\theta(e_i)\in \nfrak^-$ and $\theta(f_i)\in \nfrak^+$ if $i\in I\setminus X$, where $X$ is the set given in (2).  
\end{enumerate}
Following \cite{MSRI-Letzter}, we call the involution $\theta$ maximally split if properties (1), (2) and (3) hold. 
From now on we assume that $\theta$ is maximally split. By condition (1), the map $\theta$ induces an involution $\Theta:\hfrak^\ast\rightarrow \hfrak^\ast$ which leaves the root system $\Phi$ invariant. By construction we have $\theta(\gfrak_\alpha)=\gfrak_{\Theta(\alpha)}$ for all roots $\alpha\in \Phi$. Moreover, if $\theta$ is maximally split, then there exists an involutive diagram automorphism $\tau:I\rightarrow I$ such that $\Theta(-\alpha_i)-\alpha_{\tau(i)}\in Q^+_X$ for all $i\in I\setminus X$, see also \cite[(7.5)]{MSRI-Letzter}. The pair $(X,\tau)$ is called a Satake diagram and determines the involution $\theta$ uniquely up to conjugation, see also \cite[Theorem 2.7]{a-Kolb14}. The Satake diagram $(X,\tau)$ is recorded in the Dynkin diagram of $\gfrak$ by colouring all nodes in $X$ black and indicating the diagram automorphism by a double-pointed arrow. See \cite[p.~32/33]{a-Araki62} for a list of all Satake diagrams for simple Lie algebras $\gfrak$.  
\subsection{Root system and Chevalley generators for $\sofrak(n)$}\label{sec:rootChevalley}
For any $n\in \N$ let $\glfrak(n)$ denote the Lie algebra of complex $n\times n$-matrices. For any $m\times n$ matrix  $M$ we write $M^t$ to denote its transpose matrix. The complex orthogonal Lie algebra $\sofrak(n)$ can be defined as
\begin{align}\label{eq:glSn}
  \glfrak_S(n)=\{M\in \glfrak(n)\,|\,M= - S M^t S^{-1}\}
\end{align}
for any non-degenerate symmetric $n\times n$-matrix $S$. The Lie algebras
$\glfrak_S(n)$ for different choices of $S$ are all isomorphic by the classification of complex symmetric bilinear forms.
For the purpose of this paper, it is convenient to choose
\begin{align}\label{eq:S-def}
    S= S_n =\begin{pmatrix}  &  & 1 \\  &  \iddots & \\ 1 & & \end{pmatrix}=\sum_{i=1}^n E_{i,n-i+1}
\end{align}
where $E_{i,j}$ denotes the elementary $n\times n$ matrix with entry $1$ in the $ij$-th position and zero entries elsewhere. For this choice of $S$, the set $\hfrak_n$ of diagonal matrices in $\glfrak_S(n)$ is a Cartan subalgebra. Moreover, the definition of $S=S_n$ is uniform for  even and odd $n$. Throughout this paper, the symbol $\sofrak(n)$ for $n\in \N$ will always denote the Lie algebra $\glfrak_S(n)$ for $S$ given by \eqref{eq:S-def}.

For any $m\times n$ matrix $A$ we write $A^\tau=S_n A^t S_m$ to denote the matrix obtained by reflection in the antidiagonal. With this notation we have
\begin{align*}
  \sofrak(n)=\glfrak_S(n)=\{M\in \glfrak(n)\,|\,M=-M^\tau\}.
\end{align*}  
\begin{rema}\label{rem:JantzenJacobson}
  Let $\field$ be any field of characteristic $0$. Then \eqref{eq:glSn} with $S$ given by \eqref{eq:S-def} defines a Lie algebra $\glfrak_n(S)_\field$ over $\field$. The set $\hfrak_n$ of diagonal matrices in $\glfrak_S(n)_\field$ is a splitting Cartan subalgebra. Up to a change of basis this example is discussed in \cite[IV, Theorems 7 and 9]{b-Jacobson62}. Moreover, finite dimensional representations of $\glfrak_S(n)_\field$ are completely reducible, irreducible representations are given by dominant integral weights, and the characters of irreducible $\glfrak_n(S)$-modules are given by Weyl's character formula, see \cite{b-Jacobson62}. We will need these statements in Section \ref{sec:mod-special} when we prove analogous results for very non-standard quantum $\sofrak(2N-1)$ over a field of characteristic $0$ and $q$ transcendental over $\Q$, see also \cite[Remark 5.14]{b-Jantzen96}.
\end{rema}
For later reference, we describe root systems and Chevalley generators of $\sofrak(n)$ for even and for odd $n$ separately. Moreover, it is convenient to give block decompositions of $\sofrak(n)$ for even and odd $n$ which will allow us to realise explicit embeddings of $\sofrak(n-1)$ into $\sofrak(n)$.

\noindent{\bf Case $n=2N$ even:} In this case, we can write
\begin{align}\label{eq:so2N}
  \sofrak(2N)=\glfrak_S(2N)=\left\{\begin{pmatrix} A& B\\ C & -A^\tau \end{pmatrix} \Bigg|\,   B=-B^\tau, C=-C^\tau\right\}.
  \end{align}
where $A,B,C \in \glfrak(N)$. The elements $h_i=E_{i,i}-E_{n-i+1,n-i+1}$ for $i=1, \dots, N$ form a basis of the Cartan subalgebra $\hfrak_{2N}$. Let $\{\eps_i\,|\,i=1, \dots N\}$ be the dual basis of $\hfrak_{2N}^*$. The root system of $\sofrak(2N)$ is given by $\{\pm(\eps_i\pm \eps_j)\,|\,1\le i<j\le N\}$. We choose the simple roots
\begin{align}\label{eq:simpleRoots2N}
  \alpha_i=\begin{cases}
        \vep_i-\vep_{i+1} & \mbox{for $1\le i \le N-1$,}\\
        \vep_{N-1} + \vep_{N} & \mbox{for $i=N$.}
        \end{cases}
\end{align}
Corresponding Chevalley generators of $\sofrak(2N)$ are given by
\begin{align}\label{eq:Chevalley1N1}
 e_{\alpha_i}&= E_{i,i+1}{-}E_{n-i,n-i+1},& f_{\alpha_i}&=E_{i+1,i}{-}E_{n-i+1,n-i}, &  h_{\alpha_i}&=h_i{-}h_{i+1}
\end{align}
for $1\le i \le N-1$, and
\begin{align}\label{eq:Chevalley1N2}
 e_{\alpha_N}&{=} E_{N-1,N+1}{-}E_{N,N+2}, & f_{\alpha_N}&{=}E_{N+1,N-1} {-} E_{N+2,N},  &h_{\alpha_N}&{=}h_{N-1}{+}h_N.
\end{align}
    {\bf Case $n=2N+1$ odd:} In this case, we can write
    \begin{align}\label{eq:so2N+1}
  \sofrak(2N+1)= \glfrak_S(2N+1)=\left\{\begin{pmatrix} A & b & B \\ d & 0 & -b^\tau\\ C & -d^\tau & -A^\tau \end{pmatrix} \Bigg|\, B=-B^\tau, C=-C^\tau\right\}
    \end{align}  
 where $b\in \C^N$ is a column vector, $d\in\C^N$ is a row vector and $A,B,C$ are $N\times N$-matrices as before. As in the even case, the elements $h_i=E_{i,i}-E_{n-i+1,n-i+1}$ for $i=1, \dots, N$ form a basis of $\hfrak_{2N+1}$, and we still denote the dual basis by $\{\eps_i\,|\,i=1, \dots, N\}$. The root system of $\sofrak(2N+1)$ is given by $\{\pm \eps_\ell, \pm(\eps_i\pm \eps_j)\,|\,\ell=1, \dots, N,\, 1\le i<j\le N\}$. We choose the simple roots
\begin{align*}
  \gamma_i=\begin{cases}
              \vep_i-\vep_{i+1} & \mbox{for $1\le i \le N-1$,}\\
              \vep_N & \mbox{for $i=N$.}
           \end{cases}
\end{align*}
Corresponding Chevalley generators $e_{\gamma_i}, f_{\gamma_i}, h_{\gamma_i}$ of $\sofrak(2N+1)$ are given by \eqref{eq:Chevalley1N1} for $1\le i \le N-1$, and
\begin{align*}
 e_{\gamma_N}&= E_{N,N+1}{-}E_{N+1,N+2}, & f_{\gamma_N}&=2(E_{N+1,N} {-} E_{N+2,N+1}),  &h_{\gamma_N}&=2h_N.
\end{align*}
In the case $n=2N+1$, it helps to define
\begin{align}\label{eq:bi-def}
  b_i&=E_{i,N+1}{-}E_{N+1,2N-i+2}, & d_i&=E_{N+1,i}{-}E_{2N-i+2,N+1} & &\mbox{for $i=1,\dots,N$.}
\end{align}
With this notation, we have $e_{\gamma_N}=b_N$ and $f_{\gamma_N}=2d_N$. Moreover, note that
\begin{align}\label{eq:2Nin2N1}
  [f_{\gamma_i},b_i]&=b_{i+1},&  [e_{\gamma_{i}},b_{i+1}]&=b_i, &  [f_{\gamma_i},d_{i+1}]&=-d_i,&  [e_{\gamma_{i}},d_{i}]&=-d_{i+1}
\end{align}
for $1\le i \le N-1$. It is convenient to consider the Chevalley generators \eqref{eq:Chevalley1N2} of the copy of $\sofrak(2N)$ inside $\sofrak(2N+1)$.
We have
\begin{align}
  [f_{\alpha_N},b_{N-1}]&=-d_{N}, & [f_{\alpha_N},b_N]&=d_{N-1},\label{eq:bc1}  \\
  [e_{\alpha_N},d_{N-1}]&=b_{N}, & [e_{\alpha_N},d_N]&=-b_{N-1}.\label{eq:bc2}
\end{align}
The above formulas \eqref{eq:bc1}, \eqref{eq:bc2} and \eqref{eq:2Nin2N1} reflect the fact that the matrices $\{b_i, d_i\,|\,i=1,\dots,N\}$ form a basis of the vector representation of the Lie subalgebra $\sofrak(2N)\subset \sofrak(2N+1)$ under the adjoint action. This is also evident from the block matrix representation \eqref{eq:so2N+1} of $\sofrak(2N+1)$. 
\subsection{Realising $\sofrak(2N-1)\subset \sofrak(2N)$ as a symmetric pair}\label{sec:symPair}
In the realisations \eqref{eq:so2N}, \eqref{eq:so2N+1}, there is an immediate embedding of Lie algebras $\varphi:\sofrak(2N)\hookrightarrow \sofrak(2N+1)$ given by
\begin{align*}
   \varphi\left(\begin{pmatrix} A & B\\ C & -A^\tau\end{pmatrix}\right) = \begin{pmatrix} A & \bnull & B \\ \bnull^t & 0 & \bnull^t \\ C & \bnull &-A^\tau  \end{pmatrix}
\end{align*}
where $\bnull\in \C^N$ denotes the zero column vector. The embedding $\sofrak(2N-1)\subset \sofrak(2N)$ is slightly less immediate. In the following, we will realise $\sofrak(2N-1)$  for $N\ge 2$ as the fixed Lie subalgebra for a maximally split involution on $\sofrak(2N)$. Consider the $2N\times 2N$-matrix
\begin{align}\label{eq:J}
  J= \begin{pmatrix} \begin{array}{c|c|c} 1 & &  \\ \hline  & S_{2N-2} & \\ \hline & & 1  \end{array}\end{pmatrix}
\end{align}
where all empty blocks are filled with zero-matrices of the appropriate size.
As $J$ and the matrix $S_{2N}$ in \eqref{eq:S-def} commute, the map
\begin{align}\label{eq:theta-def}
  \theta: \sofrak(2N)\rightarrow \sofrak(2N), \qquad \theta(M)=-JM^tJ^{-1}
\end{align}
is well-defined. More explicitly, any $M\in \sofrak(2N)$ can be written as a block matrix
\begin{align}\label{eq:X}
  M=\begin{pmatrix} a & -s^\tau & 0  \\ t & P & s \\ 0 & -t^\tau & -a\end{pmatrix}
\end{align}
where $a \in \C$, $s,t\in\C^{2N-2}$ are column vectors and $P\in \sofrak(2N-2)$. A direct calculation shows that
\begin{align}\label{eq:thetaX}
  \theta(M) = \begin{pmatrix} -a & -t^\tau & 0 \\ s & P & t \\ 0 & -s^\tau & a
              \end{pmatrix}.
\end{align}
We consider the fixed Lie subalgebra
\begin{align*}
  \sofrak(2N)^\theta = \{M\in \sofrak(2N)\,|\, \theta(M)=M\} = \glfrak_S(2N)\cap \glfrak_J(2N).
\end{align*}
Comparing \eqref{eq:X} and \eqref{eq:thetaX}, we see that
\begin{align}\label{eq:so2Ntheta}
  \sofrak(2N)^\theta=\left\{\begin{pmatrix} 0 & -s^\tau & 0  \\ s & P & s \\ 0 & -s^\tau & 0 \end{pmatrix} \,\Bigg|\, P\in \sofrak(2N-2)\right\} 
\end{align}
where $s\in \C^{2N-2}$ is a column vector.
\begin{lem}\label{lem:eta}
  The map $\eta:\sofrak(2N-1)\rightarrow \sofrak(2N)$ defined by
\begin{align*}  
  \eta\left(\begin{pmatrix} A & b & B \\ d & 0 & -b^\tau\\ C & -d^\tau & -A^\tau \end{pmatrix} \right) =  \begin{pmatrix} 0 & \frac{1}{2}d & -b^\tau & 0  \\ b & A & 2B& b  \\ -\frac{1}{2}d^\tau & \frac{1}{2}C & - A^\tau& -\frac{1}{2}d^\tau \\ 0 &\frac{1}{2}d &  -b^\tau & 0\end{pmatrix}
\end{align*}
  is an injective homomorphism of Lie algebras with image $\sofrak(2N)^\theta$. 
\end{lem}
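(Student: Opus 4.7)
The map $\eta$ is manifestly linear, and injectivity is immediate: the blocks $A$, $2B$, $\tfrac12 C$ of $\eta(M)$ occupy the interior $(2N-2)\times(2N-2)$ submatrix, while $b$ and $-\tfrac12 d^\tau$ occupy the first column below the top-left entry, so each of $A,B,C,b,d$ is recoverable from $\eta(M)$.

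To identify the image with $\sofrak(2N)^\theta$, I would regard $\eta(M)$ as a $3\times 3$ block matrix with block sizes $1, 2N-2, 1$ by amalgamating the two middle strips. The central $(2N-2)\times(2N-2)$ block reads
\begin{equation*}
P = \begin{pmatrix} A & 2B \\ \tfrac12 C & -A^\tau\end{pmatrix},
\end{equation*}
and the left column below the top-left corner is the vector $s = \bigl(b,\, -\tfrac12 d^\tau\bigr)^t \in \C^{2N-2}$. Using the decomposition $S_{2N-2} = \begin{pmatrix} 0 & S_{N-1} \\ S_{N-1} & 0\end{pmatrix}$, a short block computation gives $P^\tau = \begin{pmatrix} -A & 2B^\tau \\ \tfrac12 C^\tau & A^\tau\end{pmatrix}$, so the identities $B = -B^\tau$ and $C = -C^\tau$ imply $P = -P^\tau$ and hence $P \in \sofrak(2N-2)$. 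The analogous computation yields $-s^\tau = (\tfrac12 d,\, -b^\tau)$, matching both the top and bottom rows of $\eta(M)$. Hence $\eta(M)$ has precisely the form \eqref{eq:so2Ntheta} and lies in $\sofrak(2N)^\theta$. Conversely, every pair $(P,s)\in \sofrak(2N-2)\oplus\C^{2N-2}$ arises from a unique $(A,b,d,B,C)$, so $\eta$ is bijective onto $\sofrak(2N)^\theta$; the dimension count $\dim\sofrak(2N-1) = (N-1)(2N-1) = \dim\sofrak(2N-2) + (2N-2)$ provides a quick sanity check.

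The main task is to verify $\eta([X,Y]) = [\eta(X),\eta(Y)]$, which by linearity reduces to checking a short list of bracket identities on a generating set. I would take the Chevalley generators $e_{\gamma_i},f_{\gamma_i},h_{\gamma_i}$ ($1\le i\le N-1$) together with the auxiliary elements $b_i,d_i$ analogous to \eqref{eq:bi-def}, all of which have extremely sparse block form. The asymmetric scalings $b\mapsto b$, $d\mapsto \tfrac12 d$, $B\mapsto 2B$, $C\mapsto \tfrac12 C$ in the definition of $\eta$ are chosen precisely so that the product $\bigl(\eta(X)\eta(Y)\bigr)_{\text{middle}}$ reproduces $\eta\bigl((XY)_{\text{middle}}\bigr)$: the factors cancel in the pairing $B_1 C_2 \mapsto 2B_1 \cdot \tfrac12 C_2 = B_1 C_2$, while the correct scaling is preserved in the off-diagonal blocks $A B - B A^\tau$ and $C A - A^\tau C$. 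A parallel computation for the $b$-$d$ cross-brackets, together with the short-root relation $[e_{\gamma_{N-1}}, f_{\gamma_{N-1}}] = h_{\gamma_{N-1}}$ in which the factor $2$ in $f_{\gamma_{N-1}} = 2 d_{N-1}$ is absorbed by the factor $\tfrac12$ on $d$ in $\eta$, completes the verification. The main obstacle is purely computational: although each individual bracket is elementary, the identities must be checked case by case, and the scalings in the definition of $\eta$ are the decisive ingredient that makes the map a Lie algebra homomorphism.
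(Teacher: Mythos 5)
Your treatment of injectivity and of the image is correct and is exactly the paper's argument: after amalgamating the two middle strips, your formulas for $P^\tau$ and for $-s^\tau=(\tfrac12 d,\,-b^\tau)$ are right, so $\eta(M)$ has precisely the shape \eqref{eq:so2Ntheta}, and conversely every pair $(P,s)$ with $P\in\sofrak(2N-2)$ arises; the paper's proof of this part is the same comparison of \eqref{eq:so2N+1} with \eqref{eq:so2Ntheta}, and for the bracket it simply invokes ``a direct calculation''.

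The one genuine flaw is the claim that the identity $\eta([X,Y])=[\eta(X),\eta(Y)]$ ``by linearity reduces to checking \dots\ on a generating set''. Bilinearity of the bracket reduces the verification to pairs from a \emph{spanning} set (a basis), not to pairs from a Lie-algebra generating set: for an arbitrary linear map, validity on pairs of generators does not propagate to iterated brackets. For instance, on $\slfrak(2)$ with generating set $\{e,f\}$ the linear map $e\mapsto e$, $f\mapsto 0$, $h\mapsto 0$ satisfies the identity on every pair of generators but is not a homomorphism, since $[h,e]=2e$ is not sent to $[0,e]=0$. Your proposed list $\{e_{\gamma_i},f_{\gamma_i},h_{\gamma_i},b_i,d_i\}$ spans a subspace of dimension at most $5(N-1)$, far short of $\dim\sofrak(2N-1)=(N-1)(2N-1)$ (none of the non-simple root vectors lie in its span), so checking brackets only among these elements does not prove the lemma. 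The fix is cheap and is already contained in your own sketch: carry out the block computation for \emph{arbitrary} $X,Y$ in the block form \eqref{eq:so2N+1}; the scaling cancellations you point out ($2B_1\cdot\tfrac12 C_2=B_1C_2$, and the preservation of the scalings in the off-diagonal blocks and in the border row and column) are exactly what makes every block of $[\eta(X),\eta(Y)]$ agree with the corresponding block of $\eta([X,Y])$, and this general computation is what the paper means by its ``direct calculation''. (Alternatively, one may use that $D=\{x\,|\,\eta([x,y])=[\eta(x),\eta(y)]\ \mbox{for all }y\}$ is a subalgebra by the Jacobi identity, but then each generator must be checked against a full basis, not merely against the other generators, so nothing is saved.)
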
  
\begin{proof}
  It follows from \eqref{eq:so2N+1} and \eqref{eq:so2Ntheta} that $\eta$ is an injective linear map with image  $\sofrak(2N)^\theta$. A direct calculation shows that the map $\eta$ respects the Lie brackets.
\end{proof}  
The above Lemma shows that $(\sofrak(2N),\sofrak(2N-1))$ is a symmetric pair.
We can identify the Chevalley generators of $\sofrak(2N-1)$ inside $\sofrak(2N)$ via the embedding $\eta$.
Observe first that
\begin{align}\label{eq:etaxi}
  \eta(e_{\gamma_i})&{=}e_{\alpha_{i+1}},&
  \eta(f_{\gamma_i})&{=}f_{\alpha_{i+1}},&
  \eta(h_{\gamma_i})&{=}h_{\alpha_{i+1}}
\end{align}
for $i=1, \dots, N-2$. Again, considering the Chevalley generators $e_{\alpha_{N-1}}, f_{\alpha_{N-1}}$ of $\sofrak(2N-2)$ inside $\sofrak(2N-1)$ we have
\begin{align}\label{eq:etaxN-1}
  \eta(e_{\alpha_{N-1}})&=2 e_{\alpha_N}, & \eta(f_{\alpha_{N-1}})&=\frac{1}{2} f_{\alpha_N}.
\end{align}  
  Next, recall the elements $b_i\in \sofrak(2N-1)$ defined by \eqref{eq:bi-def} for $i=1, \dots, N-1$. We have
\begin{align}\label{eq:f1+tf1}
  f_{\alpha_1} + \theta(f_{\alpha_1}) =  \eta(b_1)
\end{align}
and hence
\begin{align}
  \eta(e_{\gamma_{N-1}}) &= \eta(b_{N-1})\nonumber\\
  &\stackrel{\eqref{eq:2Nin2N1}}{=} \eta\big([f_{\gamma_{N-2}},[f_{\gamma_{N-3}},\dots,[f_{\gamma_1}, b_1]\dots]] \big)\nonumber\\
  &\stackrel{\eqref{eq:etaxi}}{=} [f_{\alpha_{N-1}},[f_{\alpha_{N-2}},\dots,[f_{\alpha_2}, \eta(b_1)]\dots]] \nonumber\\
   &\stackrel{\eqref{eq:f1+tf1}}{=} [f_{\alpha_{N-1}},[f_{\alpha_{N-2}},\dots,[f_{\alpha_2}, f_{\alpha_1} + \theta(f_{\alpha_1})]\dots]]   \label{eq:eN-1}
\end{align}  
\begin{align}
  \eta(f_{\gamma_{N-1}}) &= 2\eta(d_{N-1})\nonumber\\
  &\stackrel{\eqref{eq:bc1}}{=} -2 \eta\big([f_{\alpha_{N-1}},b_{N-2}]\big)\nonumber\\
  &\stackrel{\eqref{eq:2Nin2N1}}{=}- 2\eta\big([f_{\alpha_{N-1}},[f_{\gamma_{N-3}},\dots,[f_{\gamma_1}, b_1]\dots]] \big)\nonumber\\
  &\stackrel{\eqref{eq:etaxi},\eqref{eq:etaxN-1}}{=}-[f_{\alpha_{N}},[f_{\alpha_{N-2}},\dots,[f_{\alpha_2}, \eta(b_1)]\dots]] \nonumber\\
   &\stackrel{\eqref{eq:f1+tf1}}{=}- [f_{\alpha_{N}},[f_{\alpha_{N-2}},\dots,[f_{\alpha_2}, f_{\alpha_1} + \theta(f_{\alpha_1})]\dots]].  \label{eq:fN-1}
\end{align}
Moreover, we have
\begin{align}\label{eq:etaxi2}
  \eta(h_{\gamma_{N-1}})= \eta\big(2(E_{N-1,N-1}-E_{N+1,N+1})\big)=2h_N
   =h_{\alpha_N}- h_{\alpha_{N-1}}.
\end{align}  
We will use Equations \eqref{eq:eN-1} and \eqref{eq:fN-1} in Section \ref{sec:RV4Bc} to identify $q$-analogues of $e_{\gamma_{N-1}}$ and $f_{\gamma_{N-1}}$ inside the QSP-subalgebra $\cB_\bc$. 
\subsection{The Satake diagram of the symmetric pair $(\sofrak(2N),\sofrak(2N-1))$}
The involution $\theta$ of $\sofrak(2N)$ defined by \eqref{eq:theta-def} satisfies properties (1), (2), (3) in Section \ref{sec:symSatake} with $X=\{2,3,\dots,N\}$.
Indeed, by \eqref{eq:thetaX} we have
\begin{align}
  \theta(e_1)&=\theta(E_{1,2}-E_{N+2,N+1})=-E_{2N-1,1} + E_{2N,2} \nonumber\\
 &=(-1)^{N+1}[f_2,[f_3,{\dots}[f_{N-2},[f_{N},[f_{N-1},[f_{N-2},{\dots}[f_2,f_1]{\dots}]]]]{\dots}]],\label{eq:ms1}\\
  \theta(f_1)&=\theta(E_{2,1}-E_{2N,2N-1})=-E_{1,2N-1} + E_{2,2N} \nonumber\\  &=(-1)^{N+1}[e_2,[e_3,{\dots}[e_{N-2},[e_{N},[e_{N-1},[e_{N-2},{\dots}[e_2,e_1]{\dots}]]]]{\dots}]]\label{eq:ms2}
\end{align}
where to shorten notation we wrote $e_i, f_i$ instead of $e_{\alpha_i}$ and $f_{\alpha_i}$, respectively. Formulas \eqref{eq:ms1} and \eqref{eq:ms2} show that condition (3) in Section \ref{sec:symSatake} holds with $X=\{2,3,\dots,N\}$, and hence the involution $\theta$ defined by \eqref{eq:theta-def} is maximally split.
On the level of roots we obtain
\begin{align*} \Theta(-\alpha_1)-\alpha_1=2(\alpha_2+\dots+\alpha_{N-2})+\alpha_{N-1}+\alpha_N\in Q_X^+
\end{align*}  
as expected, and the involution $\theta$ corresponds to the Satake diagram of type $DII$ depicted in Figure \ref{fig:DII}. 
	
\section{Very non-standard quantum $\sofrak(2N-1)$}
The theory of quantum symmetric pairs provides a coideal subalgebra $\cB_\bc$ of $U_q(\sofrak(2N))$ corresponding to the Satake diagram in Figure \ref{fig:DII}. The algebra $\cB_\bc$ is a quantum group analogue of $U(\sofrak(2N-1))$. We recall the definition of $\cB_\bc$ in the conventions of \cite{a-Kolb14} and its presentation in terms of generators and relations as given in \cite{a-BalaKolb15}.
\subsection{Preliminaries on quantum groups}\label{sec:prelim}
For the remainder of this paper we fix $\gfrak=\sofrak(2N)$. Let $\qfield$ be a field 
and fix an element $q\in \qfield\setminus \{0\}$  which is not a root of unity. Let $\Uq=\uqg$ be the corresponding Drinfeld-Jimbo quantum enveloping algebra defined over $\qfield$ in terms of generators $E_i, F_i, K_i^{\pm 1}$ for $i=1, \dots, N$ and relations given in \cite[4.3]{b-Jantzen96}. For simplicity we write $I=\{1,2, \dots, N\}$. Let $U^+, U^-$ and $U^0$ denote the subalgebras of $\Uq$ generated by $\{E_i\,|\,i\in I\}$, $\{F_i\,|\,i\in I\}$ and $\{K_i^{\pm 1}\,|\,i\in I\}$, respectively. By \cite[4.21]{b-Jantzen96} the algebra $\Uq$ has a triangular decomposition in the sense that the multiplication map
\begin{align}\label{eq:triang}
  U^-\ot U^0\ot U^+\rightarrow \Uq
\end{align}
is a linear isomorphism. Recall the set of simple roots $\Pi=\{\alpha_i\,|\,i\in I\}$ given by \eqref{eq:simpleRoots2N}, the root lattice $Q=\Z\Pi$ and the positive cone $Q^+=\N_0\Pi$. For any $\mu\in Q$ let $U_\mu=\{u\in U\,|\,K_iuK_i^{-1}=q^{\mu(h_{\alpha_i})}u \mbox{ for all $i\in I$}\}$ be the corresponding weight space. We also write $U^\pm_\mu=U_\mu\cap U^\pm$.
Set $\Pi^\vee=\{h_{\alpha_i}\,|\,i\in I\}$ and let $Q^\vee=\Z\Pi^\vee$ be the coroot lattice. We can view $U^0$ as the group algebra of $Q^\vee$ and we write $K_h=\prod_{i\in I}K_i^{n_i}$ if $h=\sum_{i\in I}n_ih_{\alpha_i}$. With this notation we have
\begin{align}\label{eq:KEKF}
  K_h E_i K_h^{-1}=q^{\alpha_i(h)} E_i, \quad K_h F_i K_h^{-1}=q^{-\alpha_i(h)} F_i \qquad \mbox{for all $h\in Q^\vee, i\in I$.}
\end{align}  
Recall that $\Uq$ is a Hopf algebra with coproduct $\kow$, counit $\vep$ and antipode $S$ uniquely determined by
\begin{align*}
  \kow (E_i)=E_i\ot 1 + K_i\ot E_i, \quad \kow(F_i)=F_i\ot K_i^{-1}+1\ot F_i,\quad \kow(K_i)=K_i\ot K_i.
\end{align*}  
Let $T_i:\Uq\rightarrow \Uq$ for $i\in I$ denote the Lusztig braid group automorphisms on $\Uq$ in the conventions of \cite[8.14]{b-Jantzen96}. In particular, for $i,j\in I$ with $a_{ij}=-1$ we have
\begin{align}\label{eq:ij-ji}
  T_i(E_j)=[E_i,E_j]_{q^{-1}}=T_j^{-1}(E_i), \qquad T_{i}(F_j)=[F_j,F_i]_q=T_j^{-1}(F_i).
\end{align}
Let $W$ be the Weyl group of $\gfrak$ generated by the simple reflections $s_i$ corresponding to the simple roots $\alpha_i$ for $i\in I$. For any $w\in W$ with reduced expression $w=s_{i_1} \dots s_{i_\ell}$ let $T_w=T_{i_1}\dots T_{i_\ell}$ denote the corresponding Lusztig automorphism of $\Uq$.

Finally, let $\sigma:\Uq\rightarrow \Uq$ denote the involutive algebra antiautomorphism defined by $\sigma(E_i)=E_i$, $\sigma(F_i)=F_i$ and $\sigma(K_i)=K_i^{-1}$, see \cite[3.1.3]{b-Lusztig94}.

\subsection{A very non-standard deformation of $U(\sofrak(2n-1))$}
Let $X\subset I$ be the subset $X=\{2,\dots, N\}$ and let $\cM_X\subset \Uq$ be the subalgebra generated by all $E_j, F_j, K_j^{\pm 1}$ for $j\in X$. By construction, the Hopf-subalgebra $\cM_X$ is isomorphic to the Drinfeld-Jimbo quantum enveloping algebra $U_q(\sofrak(2N-2))$. Let $W_X\subset W$ be the parabolic subgroup corresponding to the subset $X$ and let $w_X\in W_X$ be the longest element.
Let $\cB_\bc\subset \Uq$ be the subalgebra generated by $\cM_X$ and the element
\begin{align*}
  B_1= F_1 - c_1 T_{w_X}(E_1) K_1^{-1}.
\end{align*}
The algebra $\cB_\bc$ depends on a single parameter $c_1\in \qfield\setminus \{0\}$.
We call $\cB_\bc$ the very-nonstandard quantum deformation of $U(\sofrak(2N-1))$. The pair $(\Uq,\cB_\bc)$ is an example of a quantum symmetric pair as constructed by G.~Letzter in \cite{a-Letzter99a}. Here we follow the conventions of \cite{a-Kolb14}. In particular, $\cB_\bc\subset \Uq$ is a right coideal subalgebra, that is $\kow(\cB_\bc)\subset \Uq\ot \cB_\bc$. As in \cite{a-Kolb14} we also refer to the algebra $\cB_\bc$ as a quantum symmetric pair (QSP) coideal subalgebra of $\Uq$.
\begin{rema}
  The algebra $\cB_\bc$ was first defined by Noumi and Sugitani in \cite[Section 3]{a-NS95} as case `$BDI$ for $\ell=1$ and even $N$'. Their construction is more in the spirit of the FRT approach to quantum groups and relies on an explicit solution of the reflection equation which is a $q$-analogue of the matrix $J$ in \eqref{eq:J}. Noumi and Sugitani denoted their QSP-coideal subalgebra by $U^{\mathrm{tw}}_q(\kfrak)$. Letzter showed in \cite[Section 6]{a-Letzter99a} that $\cB_\bc$ as defined here and $U^{\mathrm{tw}}_q(\kfrak)$ as defined in \cite{a-NS95} coincide for a suitable choice of the parameter $c_1$. 
\end{rema}  
For $i=1, \dots, N-1$ define reduced words $\sigma_i$ in the Weyl group $W$ by
\begin{align}\label{eq:sigmai}
  \sigma_i=s_i\dots s_{N-2} s_{N-1} s_{N} s_{N-2}\dots s_i. 
\end{align}
Then $w_X=\sigma_2\dots\sigma_{N-1}$ is a reduced expression for the longest element $w_X\in W_X$. Using Equation \eqref{eq:ij-ji} we obtain
\begin{align}
  T_{w_X}&(E_1)=T_{\sigma_2}(E_1)=T_2\dots T_{N-2} T_{N-1} T_{N} T_{N-2} \dots T_2(E_1)\nonumber\\
  &=\big[[E_2,\mydots[E_{N-2},E_N]_{q^{-1}}\mydots]_{q^{-1}},[E_{N-1},[E_{N-2}, \mydots[E_2,E_1]_{q^{-1}}\mydots]_{q^{-1}}]_{q^{-1}} \big]_{q^{-1}}.\label{eq:TwXE1}
\end{align}
This expression allows us to show that the algebras $\cB_\bc$ for different parameters $\bc=(c_1)$ are isomorphic via Hopf algebra automorphisms of $\Uq$.
\begin{prop}\label{prop:BcBd-iso}
  Let $c_1, d_1 \in \qfield\setminus\{0\}$ and $\cB_\bc$, $\cB_\bd$ the corresponding very nonstandard quantum deformations of $U(\sofrak(2N-1))$. Then there exists a Hopf algebra automorphism $\phi:\Uq\rightarrow \Uq$ such that $\phi(\cB_\bc)=\cB_\bd$.
\end{prop}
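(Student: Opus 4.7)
The plan is to realise $\phi$ as one of a natural family of rescaling automorphisms of $\Uq$. For any tuple $\mathbf{t}=(t_i)_{i\in I}$ of nonzero scalars in $\qfield$, I would define $\phi_\mathbf{t}$ on Chevalley generators by
\begin{align*}
  \phi_\mathbf{t}(E_i)=t_i E_i,\qquad \phi_\mathbf{t}(F_i)=t_i^{-1}F_i,\qquad \phi_\mathbf{t}(K_i^{\pm 1})=K_i^{\pm 1}.
\end{align*}
The verification that $\phi_\mathbf{t}$ extends to a Hopf algebra automorphism is routine: the quantum Serre relations are homogeneous of degree one in the $E_i$'s and separately in the $F_i$'s, the relation $[E_i,F_j]=\delta_{ij}(K_i-K_i^{-1})/(q-q^{-1})$ is preserved thanks to the balanced choice $t_i\cdot t_i^{-1}=1$, and compatibility with the coproduct, counit and antipode is immediate from the formulas recalled in Section \ref{sec:prelim}. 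It is also clear that $\phi_\mathbf{t}(\cM_X)=\cM_X$, since each generator $E_j,F_j,K_j^{\pm 1}$ of $\cM_X$ is rescaled within $\cM_X$.

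The key observation is that $\phi_\mathbf{t}$ acts on each weight space $U^+_\mu$ by the scalar $\mathbf{t}^\mu:=\prod_{i\in I}t_i^{n_i}$, where $\mu=\sum n_i\alpha_i$, because an ordered monomial $E_{i_1}\cdots E_{i_r}$ of weight $\alpha_{i_1}+\dots+\alpha_{i_r}$ is rescaled by $\prod_k t_{i_k}=\mathbf{t}^{\alpha_{i_1}+\dots+\alpha_{i_r}}$. Since Lusztig's braid automorphisms permute weight spaces according to the Weyl group action, the vector $T_{w_X}(E_1)$ lies in $U^+_{w_X(\alpha_1)}$, and therefore
\begin{align*}
  \phi_\mathbf{t}(B_1)=t_1^{-1}F_1 - c_1\,\mathbf{t}^{w_X(\alpha_1)}\,T_{w_X}(E_1)K_1^{-1}.
\end{align*}
At this point I would invoke the explicit identity $w_X(\alpha_1)=\alpha_1+2(\alpha_2+\dots+\alpha_{N-2})+\alpha_{N-1}+\alpha_N$ from Section \ref{sec:sym-pair} (readily cross-checked by counting the occurrences of each $E_i$ in the expansion \eqref{eq:TwXE1}). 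Choosing $t_i=1$ for every $i\neq N-1$ and $t_{N-1}=d_1c_1^{-1}\in\qfield\setminus\{0\}$ then gives $t_1^{-1}=1$ and $\mathbf{t}^{w_X(\alpha_1)}=d_1c_1^{-1}$, so $\phi_\mathbf{t}(B_1)=F_1-d_1\,T_{w_X}(E_1)K_1^{-1}$, which is precisely the defining generator of $\cB_\bd$. Combined with $\phi_\mathbf{t}(\cM_X)=\cM_X$, this yields $\phi_\mathbf{t}(\cB_\bc)\subseteq\cB_\bd$, and the reverse inclusion follows by running the argument with the inverse tuple $(t_i^{-1})_{i\in I}$.

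The main delicate point is the weight-space observation: since $T_{w_X}$ does not commute with $\phi_\mathbf{t}$ on the nose, one cannot push $\phi_\mathbf{t}$ through $T_{w_X}$ directly and must instead argue via the weight grading. The structural ingredient that makes the construction work over an arbitrary field is that $w_X(\alpha_1)$ admits some simple root with odd coefficient (here $\alpha_{N-1}$, with coefficient $1$), so that the single equation $\mathbf{t}^{w_X(\alpha_1)}=d_1c_1^{-1}$ can be solved linearly in the $t_i$. Had every coefficient of $w_X(\alpha_1)$ been even, this elementary rescaling strategy would force $d_1/c_1$ to be a square in $\qfield$.
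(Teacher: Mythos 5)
Your proposal is correct and follows essentially the same route as the paper: the paper also uses a diagonal rescaling Hopf algebra automorphism fixing $U^0$ and $\cM_X$, only it rescales $E_N\mapsto \frac{d_1}{c_1}E_N$, $F_N\mapsto \frac{c_1}{d_1}F_N$ (rather than the node $N-1$) and reads off $\phi(T_{w_X}(E_1))=\frac{d_1}{c_1}T_{w_X}(E_1)$ directly from the explicit expansion \eqref{eq:TwXE1} instead of your weight-grading argument; both choices work because $\alpha_{N-1}$ and $\alpha_N$ each occur with coefficient $1$ in $w_X(\alpha_1)$.
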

\begin{proof}
  Define the automorphism $\phi:\Uq\rightarrow \Uq$ by $\phi(K)=K$ for all $K\in U^0$ and
  \begin{align*}
    \phi(E_i)=\begin{cases}
                E_i & \mbox{if $i\neq N$,}\\
                \frac{d_1}{c_1} E_N & \mbox{if $i=N$,}
              \end{cases} \qquad 
    \phi(F_i)=\begin{cases}
                F_i & \mbox{if $i\neq N$,}\\
                \frac{c_1}{d_1} F_N & \mbox{if $i=N$.}
              \end{cases}
  \end{align*}
  By Equation \eqref{eq:TwXE1} we have
  \begin{align*}
     \phi(F_1- c_1 T_{w_X}(E_1)K_1^{-1}) = F_1 - d_1 T_{w_X}(E_1)K_1^{-1}.
  \end{align*}
  Moreover, $\phi$ restricts to a Hopf algebra automorphism of $\cM_X$. This shows that $\phi(\cB_\bc)=\cB_\bd$.
\end{proof}  
\begin{rema}
  In \cite{a-Letzter99a} and \cite{a-Kolb14} the quantised enveloping algebra $\uqg$ and its coideal subalgebra $\cB_\bc$ are defined over the field $\qfield=\field(q)$ of rational functions in an indeterminate $q$ and $\field$ is assumed to be of characteristic zero. However, all relevant results in \cite{a-Kolb14} not involving specialisation also hold in the more general setting that $q\in \qfield$ is not a root of unity in an arbitrary field $\qfield$. Only once we use specialisation in Section \ref{sec:specialization} will we have to restrict to fields $\qfield$ of characteristic $0$ and $q\in \qfield$ which is transcendental over $\Q$. This is similar to the approach in \cite[Section 5]{b-Jantzen96} which we are trying to emulate.
\end{rema}  
\subsection{Generators and relations for $\cB_\bc$}\label{sec:gen-rels}
Recall the Lusztig-Kashiwara skew derivation ${}_ir, r_i:U^+\rightarrow U^+$ defined by
\begin{align}\label{eq:uFFu}
  u F_i - F_i u = \frac{r_i(u)K_i - K_i^{-1}{}_ir(u)}{q-q^{-1}} \qquad \mbox{for all $u\in U^+$,}
\end{align}
see \cite[Lemma 6.17]{b-Jantzen96}, and define
\begin{align}\label{eq:Zidef}
  \cZ_1 = r_1(T_{w_X}(E_1)).
\end{align}
For $N=3$ one gets $\cZ_1=(1-q^{-2})^2E_2E_3$. For $N\ge 4$ one calculates as in \cite[Section 2.2]{a-BalaKolb15} to obtain
\begin{align}\label{eq:Z1-explicit}
  \cZ_1=(1-q^{-2})[T_{3}^{-1}\dots T_{N-2}^{-1}T_N^{-1}T_{N-1}^{-1}T_{N-2}^{-1}\dots T_3^{-1}(E_2),E_2]_{q^{-2}}.
\end{align}
For $N\ge 4$ Equation \eqref{eq:Zidef} implies that $T_i(\cZ_1)=\cZ_1$ if $i\ge 3$. Hence Equation \eqref{eq:Z1-explicit} can be rewritten as
\begin{align}\label{eq:Z1-explicit-T}
  \cZ_1=(1-q^{-2})[E_2,T_{3}\dots T_{N-2}T_NT_{N-1}T_{N-2}\dots T_3(E_2)]_{q^{-2}}.
\end{align}
By \cite[8.26]{b-Jantzen96} one has
\begin{align}
  T_i(U^+)\cap U^+=\{x\in U^+\,|\,r_i(x)=0\},\label{eq:rinull}\\
  T_i^{-1}(U^+)\cap U^+=\{x\in U^+\,|\,{}_ir(x)=0\}.\label{eq:irnull}
\end{align}  
By application of \eqref{eq:rinull} and \eqref{eq:irnull} to the expressions \eqref{eq:Z1-explicit} and \eqref{eq:Z1-explicit-T}, respectively, we hence obtain
\begin{align*}
  r_2(\cZ_1)&=(1-q^{-2})^2T_{3}^{-1}\dots T_{N-2}^{-1}T_N^{-1}T_{N-1}^{-1}T_{N-2}^{-1}\dots T_3^{-1}(E_2),\\
  {}_2r(\cZ_1)&= (1-q^{-2})^2T_{3}\dots T_{N-2}T_NT_{N-1}T_{N-2}\dots T_3(E_2)
\end{align*}  
in the case $N\ge 4$. With these preparations we are ready to describe the defining relations of the algebra $\cB_\bc$ explicitly.
\begin{thm}[\upshape{\cite[Theorem 7.1]{a-Letzter03}, see also \cite[Theorem 3.9]{a-BalaKolb15}}] The algebra $\cB_\bc$ is generated over $\cM_X$ by the element $B_1$ subject only to the relations
\begin{align*}
  K_i B_1 K_i^{-1} &= q^{-(\alpha_i,\alpha_1)} B_1 & &\mbox{for $i=2, \dots, N$,}\\
  E_i B_1 &= B_1 E_i& &\mbox{for $i=2, \dots, N$,}\\
  F_j B_1 &= B_1 F_j& &\mbox{for $j=3, \dots, N$,}
\end{align*}
\begin{align}
  B_1^2 F_2 - (q+q^{-1}) B_1 F_2 B_1 + F_2 B_1^2 & =  \label{eq:qSerre-Bc1}\\
       -qc_1 \Big(F_2 \cZ_1 + &
  \frac{q r_2(\cZ_1)K_2 + q^{-1} {}_2r(\cZ_1)K_2^{-1}}{(q-q^{-1})^2} \Big),\nonumber\\
   F_2^2 B_1 - (q + q^{-1}) F_2 B_1 F_2 + B_1 F_2^2 &=0.
\end{align}  
\end{thm}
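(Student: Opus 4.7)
The plan is to verify the theorem in two parts: \emph{soundness} (the listed relations hold in $\cB_\bc$) and \emph{completeness} (no further relations are needed). For soundness, the weight relation $K_i B_1 K_i^{-1} = q^{-(\alpha_i,\alpha_1)} B_1$ for $i\in X$ reduces to showing that $T_{w_X}(E_1)K_1^{-1}$ transforms under $K_i$-conjugation in the same way as $F_1$, i.e.\ with weight $-\alpha_1$ on $Q_X^\vee$. Since $T_{w_X}(E_1)$ is a weight vector of weight $w_X(\alpha_1)$, this follows from the general fact that $w_X(\alpha_1)-\alpha_1\in Q_X$ for the longest element $w_X\in W_X$ with the Satake data $\tau=\mathrm{id}$ in type $DII$, so that $w_X(\alpha_1)|_{Q_X^\vee}=0$. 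The commutation relations $E_i B_1=B_1 E_i$ for $i\in X$ and $F_j B_1=B_1 F_j$ for $j\geq 3$ each split into an $F_1$-contribution (which is immediate: $[E_i,F_1]=0$ for $i\in X$, and $F_j$ commutes with $F_1$ when $j\geq 3$) and a $T_{w_X}(E_1)K_1^{-1}$-contribution, which follows from the fact that $T_{w_X}(E_1)$ is a lowest weight vector for the $\ad$-action of $\cM_X$; this is in turn a consequence of the braid identities $T_i T_{w_X}=T_{w_X}T_{w_X^{-1}s_i w_X}$ for $i\in X$ combined with the extremal weight property of $T_{w_X}(E_1)$.

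For the two $q$-Serre-type relations I would check each summand of $B_1=F_1-c_1 T_{w_X}(E_1)K_1^{-1}$ separately. The undeformed relation $F_2^2B_1-(q+q^{-1})F_2B_1F_2+B_1F_2^2=0$ follows because $F_1$ satisfies the standard quantum Serre relation with $F_2$, and $T_{w_X}(E_1)K_1^{-1}$ satisfies the same relation, as can be read off from the explicit form \eqref{eq:TwXE1}. The deformed relation \eqref{eq:qSerre-Bc1} is the genuine source of $c_1$-dependence. Expanding $B_1^2F_2-(q+q^{-1})B_1F_2B_1+F_2B_1^2$ and using the skew-derivation formula \eqref{eq:uFFu} to move $F_2$ past $T_{w_X}(E_1)$, the pure $F_1$-terms cancel via the standard quantum Serre relation, the pure $T_{w_X}(E_1)K_1^{-1}$-terms cancel as above, and the mixed terms produce commutators that can be expressed in terms of $\cZ_1=r_1(T_{w_X}(E_1))$ together with $r_2(\cZ_1)$ and ${}_2r(\cZ_1)$. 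Careful bookkeeping then matches the displayed right-hand side of \eqref{eq:qSerre-Bc1}.

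For completeness, the plan is to bootstrap from Theorem A. Let $\widetilde{\cB}$ be the abstract $\qfield$-algebra presented by the generators $\cM_X$ and $\widetilde{B}_1$ modulo the stated relations. Sending $\widetilde{B}_1\mapsto B_1$ yields a surjective algebra homomorphism $\pi:\widetilde{\cB}\twoheadrightarrow\cB_\bc$. Using only the listed relations, one produces in $\widetilde{\cB}$ analogues of the root vectors for $\cB_\bc$ defined in Section~\ref{sec:RV4Bc} and shows, by the standard commutation-and-reorder argument, that ordered PBW-monomials in these span $\widetilde{\cB}$. Since the images of these monomials are linearly independent in $\cB_\bc$ by Theorem~A, the map $\pi$ is an isomorphism. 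The main obstacle is the explicit derivation of \eqref{eq:qSerre-Bc1}: one has to reduce the surviving mixed commutators via \eqref{eq:uFFu} and identify them as exactly the combination involving $\cZ_1$, $r_2(\cZ_1)$ and ${}_2r(\cZ_1)$. A secondary concern is to ensure that the completeness step does not tacitly invoke the presentation being proved, which is why it is essential that Theorem~A is established directly from the realisation $\cB_\bc\subset\Uq$ rather than from a presentation.
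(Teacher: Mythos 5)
Note first that the paper does not prove this theorem: it is imported from \cite[Theorem 7.1]{a-Letzter03} and \cite[Theorem 3.9]{a-BalaKolb15}, so there is no internal argument to compare against, and your two-step plan (verify the relations inside $\Uq$, then show the abstractly presented algebra maps isomorphically onto $\cB_\bc$) is indeed the strategy of those sources. As written, however, there are genuine gaps. The justification of the weight relation is wrong: $w_X(\alpha_1)$ does \emph{not} vanish on $Q_X^\vee$ (here $w_X(\alpha_1)=\eps_1+\eps_2$ in the notation of Section \ref{sec:rootChevalley}, which pairs nontrivially with $\alpha_2$), the implication from ``$w_X(\alpha_1)-\alpha_1\in Q_X$'' to such vanishing is a non sequitur, and even if the claim held it would give exponent $0$ rather than the required $-(\alpha_i,\alpha_1)$. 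What is needed, and true, is $(\alpha_i,w_X(\alpha_1))=-(\alpha_i,\alpha_1)$ for all $i\in X$, equivalently $w_X(\alpha_1)+\alpha_1=2\eps_1\perp Q_X$; this follows since $-w_X$ permutes $\Pi_X$ fixing the node adjacent to $1$, or from the explicit formula $\Theta(-\alpha_1)-\alpha_1=2(\alpha_2+\cdots+\alpha_{N-2})+\alpha_{N-1}+\alpha_N$. More seriously, the heart of the theorem is the precise right-hand side of \eqref{eq:qSerre-Bc1}, and your sketch leaves it at ``careful bookkeeping''. Writing $G=T_{w_X}(E_1)K_1^{-1}$, one must show that the $c_1^2$-contribution $G^2F_2-(q+q^{-1})GF_2G+F_2G^2$ vanishes identically (your ``cancel as above'' refers to nothing previously established for this degree-three expression) and then identify the $c_1$-linear cross terms, via \eqref{eq:uFFu} and weight considerations for $r_2$ and ${}_2r$ applied to $T_{w_X}(E_1)$, with exactly the displayed combination of $\cZ_1$, $r_2(\cZ_1)$ and ${}_2r(\cZ_1)$. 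This is precisely the computation carried out in \cite{a-BalaKolb15} and is where the work lies; a proof proposal that defers it has not engaged with the main difficulty.

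On completeness, your circularity worry is the right one but must be resolved, not merely flagged: in this paper Theorem \ref{thm:PBW-Bc} rests on the isomorphism $\varphi:\cA\rightarrow\gr(\cB_\bc)$ of Section \ref{sec:standardFilt}, whose construction invokes the defining relations of Section \ref{sec:gen-rels} (the statement at issue) and whose injectivity is \cite[Proposition 6.2]{a-Kolb14}. The circle can be broken, because only the soundness half of the presentation is needed to define $\varphi$, and \cite[Proposition 6.2]{a-Kolb14} is proved from the realisation $\cB_\bc\subset\Uq$; but you should state this explicitly rather than assume Theorem \ref{thm:PBW-Bc} wholesale. Finally, your proposed ``commutation-and-reorder'' spanning argument in $\widetilde{\cB}$ would require re-deriving all straightening relations among the root-vector analogues from the listed relations alone, which is substantial and unsketched. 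A cleaner and standard alternative is to filter $\widetilde{\cB}$ by the degree in $\widetilde{B}_1$: in the associated graded algebra the deformed relation \eqref{eq:qSerre-Bc1} degenerates to the undeformed Serre relation, so $\gr(\widetilde{\cB})$ is a quotient of $\cA$ (whose presentation is known from the Drinfeld--Jimbo relations and \eqref{eq:Atriang}); comparing with $\gr(\cB_\bc)\cong\cA$ along the filtered surjection $\widetilde{\cB}\twoheadrightarrow\cB_\bc$ then forces all maps to be isomorphisms.
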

The algebra $\cB_\bc$ is invariant under the Lusztig automorphisms $T_i$ for $i=2, \dots, N$. More explicitly, a direct calculation shows that
\begin{align}
  T_i(B_1)&=\begin{cases} B_1 & \mbox{if $i=3, \dots, N$,}\\
                        [B_1,F_2]_q & \mbox{if $i=2$,}
           \end{cases}\label{eq:TiB}\\
  T_i^{-1}(B_1)&=\begin{cases}B_1 & \mbox{if $i=3, \dots, N$,}\\
  [F_2,B_1]_q & \mbox{if $i=2$,}
           \end{cases}\label{eq:Ti-B}
\end{align}  
see also \cite[Theorem 4.2]{a-BaoWang18b}.

Recall the algebra antiautomorphism $\sigma$ from Section \ref{sec:prelim}. There exists an involutive algebra antiautomorphism $\sigma^B:\cB_\bc\rightarrow \cB_\bc$ defined by
\begin{align}\label{eq:sigmaB-def}
  \sigma^B|_{\cM_X}=\sigma|_{\cM_X} \quad \mbox{and} \quad \sigma^B(B_1)=B_1.
\end{align}
To show that $\sigma^B$ is well defined, it suffices to check that the right hand side of Equation \eqref{eq:qSerre-Bc1} is preserved under $\sigma$. This follows from the relation $\sigma(\cZ_1)=\cZ_1$, which was proved in \cite[Proposition 2.3]{a-BalaKolb15}, and the relation $\sigma(r_2(\cZ_1))={}_2r(\cZ_1)$ which is a direct consequence of the definition of $r_i$ and ${}_ir$ via Equation \eqref{eq:uFFu}. Note also that $K_2 \,{}_2r(\cZ_1)={}_2r(\cZ_1)K_2$ as ${}_2r(\cZ_1)\in (\cM^+_X)_{w_X(\alpha_1)-\alpha_1-\alpha_2}$.
\subsection{The standard filtration on $\cB_\bc$}\label{sec:standardFilt}
Let $\cA$ be the subalgebra of $\Uq$ generated by $\cM_X$ and the element $F_1$. Let $\cM_X^+, \cM_X^-$ and $U^0_X$ be the subalgebras of $\cM_X$ generated by $\{E_j\,|\,j\in X\}, \{F_j\,|\,j\in X\}$ and $\{K_j^{\pm 1}\,|\,j\in X\}$, respectively. By the triangular decomposition of $\Uq$, the multiplication map
\begin{align}\label{eq:Atriang}
  U^-\ot U^0_X\cM_X^+\rightarrow \cA
\end{align}
is a linear isomorphism. The algebra $\cA$ is $\N_0$-graded by the degree function $\deg$ given by $\deg(F_1)=1$ and $\deg(m)=0$ for all $m\in \cM_X$. The algebra $\cB_\bc$, on the other hand, has an $\N_0$-filtration $\cF_\ast$ defined by the degree function on the generators given by
\begin{align*}
  \deg(B_1)&=1, & \deg(m)=0 \quad \mbox{for all $m\in \cM_X$.}
\end{align*}
Let $\gr(\cB_\bc)$ denote the associated graded algebra. The defining relations of $\cB_\bc$ in Section \ref{sec:gen-rels} imply that there is a surjective homomorphism $\varphi:\cA\rightarrow \gr(\cB_\bc)$ of graded algebras such that
\begin{align*}
  \varphi(F_1)&=B_1\in \cF_1(\cB_\bc)/\cF_0(\cB_\bc),& \varphi(m)=m\in \cF_0(\cB_\bc) \quad \mbox{for $m\in \cM_X$.}
\end{align*}
It follows from \cite[Proposition 6.2]{a-Kolb14} that the map $\varphi:\cA\rightarrow \gr(\cB_\bc)$ is an isomorphism of graded algebras.

By Equations \eqref{eq:TiB}, \eqref{eq:Ti-B} the algebra isomorphisms $T_i:\cB_\bc\rightarrow \cB_\bc$ for $i\in X$ restrict to linear isomorphisms $T_i:\cF_n(\cB_\bc)\rightarrow \cF_n(\cB_\bc)$ for any $n\in \N_0$. Hence the maps $T_i:\cB_\bc\rightarrow \cB_\bc$ induce algebra isomorphisms $\gr(T_i):\gr(\cB_\bc)\rightarrow \gr(\cB_\bc)$. The second formula in \eqref{eq:ij-ji} and Equations \eqref{eq:TiB}, \eqref{eq:Ti-B} now imply the following result.
\begin{lem}\label{lem:grT}
  For any $i\in X$ the algebra isomorphism $\varphi:\cA\rightarrow \gr(\cB_\bc)$ is compatible with the Lusztig automorphism $T_i$ in the sense that $\varphi\circ T_i = \gr(T_i)\circ \varphi$.
\end{lem}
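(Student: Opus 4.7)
The plan is to reduce the claim to a verification on algebra generators of $\cA$. Both $\varphi\circ T_i$ and $\gr(T_i)\circ\varphi$ are algebra homomorphisms from $\cA$ to $\gr(\cB_\bc)$ (the first because $T_i$ is an algebra automorphism of $\cA\subset \Uq$ and $\varphi$ is an algebra homomorphism; the second because $T_i$ preserves the filtration $\cF_\ast(\cB_\bc)$, making $\gr(T_i)$ a graded algebra automorphism of $\gr(\cB_\bc)$, composed with the algebra map $\varphi$). Hence it suffices to check the identity on a generating set of $\cA$, which by construction may be taken to be $\cM_X\cup\{F_1\}$.

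For $m\in \cM_X$ the equality is essentially tautological. The restriction $T_i|_{\cM_X}$ is an automorphism of $\cM_X$, and $\varphi$ restricts to the inclusion $\cM_X\hookrightarrow \cF_0(\cB_\bc)\subset \gr(\cB_\bc)$. On the other side, $\gr(T_i)$ acts on the degree-zero component exactly as $T_i|_{\cM_X}$, because $T_i$ stabilises $\cF_0(\cB_\bc)=\cM_X$. Both sides of the desired equation therefore evaluate to $T_i(m)\in \cM_X$.

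For the remaining generator $F_1$ I split into the cases $i\in\{3,\dots,N\}$ and $i=2$. When $i\ge 3$, the fact that $a_{i,1}=0$ yields $T_i(F_1)=F_1$, while \eqref{eq:TiB} gives $T_i(B_1)=B_1$; both sides of the claimed identity then evaluate to the class of $B_1$ in $\cF_1(\cB_\bc)/\cF_0(\cB_\bc)$. When $i=2$, Equation \eqref{eq:ij-ji} yields $T_2(F_1)=[F_1,F_2]_q$, whereas \eqref{eq:TiB} yields $T_2(B_1)=[B_1,F_2]_q$. Since $F_2\in \cM_X\subset \cF_0(\cB_\bc)$, the element $[B_1,F_2]_q$ still lies in $\cF_1(\cB_\bc)$, and its class in $\cF_1/\cF_0$ is $[\varphi(F_1),F_2]_q=\varphi([F_1,F_2]_q)$. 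The two sides therefore agree in this case as well.

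I do not expect any serious obstacle in this argument: the substantive content is the prior identification of $\varphi$ as an algebra isomorphism via \cite[Proposition~6.2]{a-Kolb14}, together with the explicit braid group formulas \eqref{eq:TiB} and \eqref{eq:ij-ji}; the lemma is then a direct unraveling of definitions. The one point to be careful about is that in the $i=2$ case $[B_1,F_2]_q$ genuinely has filtration degree $1$ (not higher), which is immediate because $F_2$ has filtration degree $0$ and multiplication by $\cF_0$ preserves $\cF_1$.
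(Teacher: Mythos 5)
Your argument is correct and is essentially the paper's own: the paper proves the lemma by simply invoking the second formula in \eqref{eq:ij-ji} together with \eqref{eq:TiB}, \eqref{eq:Ti-B}, which amounts exactly to your generator-by-generator check of $\varphi\circ T_i=\gr(T_i)\circ\varphi$ on $\cM_X$ and on $F_1$ (the cases $i\geq 3$ and $i=2$). Your write-up just makes explicit the reduction to generators and the degree bookkeeping that the paper leaves implicit.
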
  
\section{A Poincar\'e-Birkhoff-Witt Theorem for $\cB_\bc$}\label{sec:PBW}
We now use our understanding of the classical symmetric pair $(\sofrak(2N),\sofrak(2N-1))$ from Section \ref{sec:sym-pair} to define root vectors for the algebra $\cB_\bc$. We will see in Theorem \ref{thm:PBW-Bc} that the ordered monomials in the root vectors provide a Poincar\'e-Birkhoff-Witt basis for $\cB_\bc$. This statement can be reformulated as a triangular decomposition for $\cB_\bc$ in Corollary \ref{cor:Bc-triang}. We end the section with some preliminary results on $q$-commutators of root vectors.
\subsection{The Poincar\'e-Birkhoff-Witt Theorem for $U_q(\sofrak(2N))$}
Recall the reduced words $\sigma_i$ for $i=1, \dots, N-1$ defined by \eqref{eq:sigmai}. The word
\begin{align}\label{eq:reduced}
  w_0=\sigma_1 \sigma_2 \dots \sigma_{N-1}
\end{align}
is a reduced expression of the longest element in $W$. Write the reduced expression \eqref{eq:reduced} as $w_0=s_{i_1}\dots s_{i_{N(N-1)}}$. We obtain the $N(N-1)$ positive roots of $\sofrak(2N)$ as
\begin{align*}
  \beta_j=s_{i_1} \dots s_{i_{j-1}}(\alpha_{i_j}) \qquad \mbox{for $j=1, \dots, N(N-1)$.}
\end{align*}  
Let $\tau:I\rightarrow I$ be the nontrivial diagram automorphism, i.e.~$\tau(i)=i$ for $i=1, \dots, N-2$ and $\tau(N-1)=N$, $\tau(N)=N-1$. As $\sigma_i(\alpha_j)=\alpha_{\tau(j)}$ for $i<j$, we see that $\alpha_1$ is contained with a nonzero coefficient only in the roots $\beta_1,\dots, \beta_{2(N-1)}$ but not in $\beta_j$ for $j>2(N-1)$.
Define root vectors $F_{\beta_j}\in U^-$ by
\begin{align}\label{eq:Fbetaj-def}
  F_{\beta_j}= T_{i_1} \dots T_{i_{j-1}}(F_{i_j}) \qquad \mbox{for $j=1, \dots, N(N-1)$.}
\end{align}
By the Poincar\'e-Birkhoff-Witt Theorem \cite[8.24]{b-Jantzen96} the ordered monomials
\begin{align}\label{eq:FJ-def}
  F_{\cJ}=F_{\beta_{N(N-1)}}^{j_{N(N-1)}} \dots  F_{\beta_2}^{j_2}F_{\beta_1}^{j_1} \qquad \mbox{ for $\cJ=(j_1, \dots, j_{N(N-1)})\in \N_0^{N(N-1)}$}
\end{align}
form a $\qfield$-vector space basis $U^-$. We want to rewrite the root vectors  $F_{\beta_j}$ in the case where $\beta_j$ contains $\alpha_1$ with a non-zero coefficient.
\begin{lem}
  For $j\in\{1,\dots, 2(N-1)\}$ the root vector $F_{\beta_j}$ can be written as
  \begin{align}\label{eq:Fbetaj}
    F_{\beta_j}=\begin{cases}
      F_1 & \mbox{if $j=1$,}\\
      T_j^{-1}\dots T_{2}^{-1}(F_1) & \mbox{if $2\le j\le N-1$,}\\
      T_{2N-j}^{-1}\dots T_N^{-1}T_{N-2}^{-1}\dots T_2^{-1}(F_1 ) & \mbox{if $N\le j\le 2(N-1)$.}
              \end{cases}
  \end{align}  
\end{lem}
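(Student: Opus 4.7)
The plan is to induct on $j$ within each of the three cases, using \eqref{eq:ij-ji}, which converts $T_i(F_j)$ into $T_j^{-1}(F_i)$ when $a_{ij}=-1$, together with the braid relations satisfied by the Lusztig automorphisms.

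The case $j=1$ is immediate from \eqref{eq:Fbetaj-def}. For the range $2\le j\le N-1$, I would induct on $j$, with base case $j=2$ given directly by $T_1(F_2)=T_2^{-1}(F_1)$ from \eqref{eq:ij-ji}. For the inductive step from $j$ to $j+1\le N-1$, I would start with $F_{\beta_{j+1}}=T_1T_2\cdots T_{j-1}T_j(F_{j+1})$, use \eqref{eq:ij-ji} to rewrite $T_j(F_{j+1})=T_{j+1}^{-1}(F_j)$, and then commute $T_{j+1}^{-1}$ past each of $T_1,\ldots,T_{j-1}$. These commutations are legitimate because every such index differs from $j+1$ by at least $2$, and since $j+1\le N-1$ we are still in the $A$-type portion of the Dynkin diagram, where non-adjacent simple reflections commute. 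Applying the induction hypothesis to $T_1\cdots T_{j-1}(F_j)$ then completes the case.

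The transition at $j=N$ uses the branching of the diagram. Because $a_{N-1,N}=0$ we have $T_{N-1}(F_N)=F_N$, so $F_{\beta_N}=T_1\cdots T_{N-2}(F_N)$. Applying \eqref{eq:ij-ji} gives $T_{N-2}(F_N)=T_N^{-1}(F_{N-2})$, hence $F_{\beta_N}=T_1\cdots T_{N-3}T_N^{-1}(F_{N-2})$. Since $T_N$ commutes with each of $T_1,\ldots,T_{N-3}$ (every such index differs from $N$ by at least $2$ and is distinct from the branch node $N-2$), we may pull $T_N^{-1}$ all the way to the left. Invoking the previous case with $j=N-2$ to rewrite the remaining factor yields $F_{\beta_N}=T_N^{-1}T_{N-2}^{-1}\cdots T_2^{-1}(F_1)$, matching the claimed formula.

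For $N+1\le j\le 2(N-1)$ I would induct on $k=j-N\ge 1$, with the $j=N$ case as base. The claimed formulas at consecutive values of $j$ differ exactly by a leading factor $T_{N-k}^{-1}$, so the inductive step reduces to the identity
\[
F_{\beta_{N+k}}\;=\;T_{N-k}^{-1}\bigl(F_{\beta_{N+k-1}}\bigr).
\]
To prove this, I would start from the explicit expression $F_{\beta_{N+k}}=T_1\cdots T_{N-1}T_NT_{N-2}\cdots T_{N-k+1}T_{N-k}(F_{N-1-k})$, apply \eqref{eq:ij-ji} to rewrite $T_{N-k}(F_{N-1-k})=T_{N-1-k}^{-1}(F_{N-k})$, and then shift $T_{N-1-k}^{-1}$ leftward through the operator chain by repeated use of the braid relations. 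I expect this shifting to be the main technical obstacle, since the chain passes through the branching node $s_{N-2}$, where the two rank-two braid relations $T_{N-2}T_{N-1}T_{N-2}=T_{N-1}T_{N-2}T_{N-1}$ and $T_{N-2}T_NT_{N-2}=T_NT_{N-2}T_N$ must be used in concert with the commutativity $T_{N-1}T_N=T_NT_{N-1}$ and with the $A$-type braid relations along the linear portion of the diagram. A carefully organized version of this calculation should produce a leading $T_{N-k}^{-1}$ together with exactly the operator chain defining $F_{\beta_{N+k-1}}$, completing the induction.
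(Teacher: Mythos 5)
Your treatment of the range $j\le N$ is complete and correct, and it is essentially the computation the paper leaves implicit: repeated use of \eqref{eq:ij-ji} together with commutation of $T_i$'s attached to non-adjacent nodes, with the branching handled at $j=N$ via $T_{N-1}(F_N)=F_N$ and $T_{N-2}(F_N)=T_N^{-1}(F_{N-2})$.

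The gap is in the range $N+1\le j\le 2(N-1)$, which is the only part of the lemma requiring real work. You correctly reduce it to the identity $F_{\beta_{N+k}}=T_{N-k}^{-1}(F_{\beta_{N+k-1}})$, but then you only say you ``expect'' a suitably organized braid computation to prove it; as written, the decisive step is asserted rather than established, and your diagnosis of the difficulty is off: no braid relation at the trivalent node is needed. Concretely, for $k\ge 2$ write $T_{N-k}(F_{N-1-k})=T_{N-1-k}^{-1}(F_{N-k})$ and observe that $T_{N-1-k}^{-1}$ (with $N-1-k\le N-3$) commutes with every operator to its left except the single $T_{N-k}$ in the prefix $T_1\cdots T_{N-2}$: it passes through the second block $T_{N-2}\cdots T_{N-k+1}$, through $T_N$ and $T_{N-1}$, and through $T_{N-2},\dots,T_{N-k+1}$ of the prefix; then one type-$A$ braid relation $T_{N-1-k}T_{N-k}T_{N-1-k}^{-1}=T_{N-k}^{-1}T_{N-1-k}T_{N-k}$ produces a factor $T_{N-k}^{-1}$ which commutes with $T_1,\dots,T_{N-2-k}$ and can be pulled to the front, leaving exactly the defining word of $F_{\beta_{N+k-1}}$. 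Note also that your generic expression degenerates at $k=1$ (the innermost operator applied to $F_{N-2}$ is $T_N$, not $T_{N-1}$), so the first inductive step needs a separate small computation, e.g. $T_{N-2}T_{N-1}T_N(F_{N-2})=T_{N-1}^{-1}T_{N-2}(F_N)$, which is essentially your $j=N$ argument again. For comparison, the paper does this case in one pass rather than by induction: it rewrites the trailing string via \eqref{eq:ij-ji}, moves the resulting block of inverse operators past the prefix using the identity $T_1\cdots T_{N-2}T_k^{-1}\cdots T_{N-3}^{-1}=T_{k+1}^{-1}\cdots T_{N-2}^{-1}T_1\cdots T_{N-2}$, and then applies $T_{N-2}T_{N-1}T_N(F_{N-2})=T_{N-1}^{-1}T_N^{-1}(F_{N-2})$ together with the already-proved case $j=N-2$; your induction, once the step above is actually carried out, is a legitimate and slightly more incremental alternative.
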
  
\begin{proof}
  It follows from \eqref{eq:ij-ji} that Equation \eqref{eq:Fbetaj} holds for $j=1, \dots, N$. For $j\ge N+1$ we have
  \begin{align*}
    F_{\beta_j}=T_1 \dots  T_N T_{N-2}\dots T_{k+1}(F_k) \qquad \mbox{where $k=2N-j-1$.} 
  \end{align*}
  Using again  \eqref{eq:ij-ji} we hence obtain
  \begin{align*}
    F_{\beta_j}= T_{1}\dots T_N T^{-1}_{k} \dots T_{N-3}^{-1}(F_{N-2}).
  \end{align*}
  As $T_{1} \dots T_{N-2} T^{-1}_k \dots T^{-1}_{N-3}= T^{-1}_{k+1} \dots T_{N-2}^{-1} T_1 \dots T_{N-2}$ we obtain
  \begin{align*}
    F_{\beta_j}=  T^{-1}_{k+1} \dots T_{N-2}^{-1} T_1 \dots  T_N(F_{N-2}).
  \end{align*}
  Now we use $T_{N-2}T_{N-1}T_N(F_{N-2})=T_{N-1}^{-1}T_N^{-1}(F_{N-2})$ and \eqref{eq:ij-ji} to obtain
  \begin{align*}
    F_{\beta_j}= T^{-1}_{2N-j} \dots T^{-1}_{N-2} T^{-1}_N \dots T^{-1}_2(F_1) 
  \end{align*}
  where we also substituted $k+1=2N-j$. This settles the remaining cases. 
  \end{proof}  

\subsection{Root vectors for $\cB_\bc$}\label{sec:RV4Bc}
Recall from Equation \eqref{eq:TiB}, \eqref{eq:Ti-B} that the Lusztig automorphisms $T_i$ for $i\in X$ leave $\cB_\bc$ invariant. In analogy to \eqref{eq:Fbetaj} we define root vectors of $\cB_\bc$ for $j=1, \dots, N(N-1)$ by
\begin{align}\label{eq:Bbetaj}
    B_{\beta_j}=\begin{cases}
      B_1 & \mbox{if $j=1$,}\\
      T_j^{-1}\dots T_{2}^{-1}(B_1) & \mbox{if $2\le j\le N-1$,}\\
      T_{2N-j}^{-1}\dots T_N^{-1}T_{N-2}^{-1}\dots T_2^{-1}(B_1 ) & \mbox{if $N\le j\le 2(N-1)$.}\\
      F_{\beta_j} & \mbox{if $j >2(N-1)$.}
               \end{cases}
  \end{align}  
In view of Equation \eqref{eq:Ti-B} we can write the root vectors for $\cB_\bc$ explicitly as iterated $q$-commutators.
\begin{lem}\label{lem:Bbetaj2}
  For $j\in \{1,\dots,2(N-1)\}$ the root vectors $B_{\beta_j}$ can be written as
  \begin{align}\label{eq:Bbetaj2}
     B_{\beta_j}=\begin{cases}
      B_1 & \mbox{if $j=1$,}\\
      [F_j,[F_{j-1},\dots,[F_3,[F_2,B_1]_q]_q{\dots}]_q]_q & \mbox{if $2\le j\le N-1$,}\\
     [F_N,[F_{N-2},\dots,[F_3,[F_2,B_1]_q]_q{\dots}]_q]_q & \mbox{if $j=N$,}
      \\  
    [F_N,[F_{N-1},\dots,[F_3,[F_2,B_1]_q]_q{\dots}]_q]_q & \mbox{if $j=N+1$,}
      \\
      [T_{N}{\dots}T_{2N-j+1}(F_{2N-j}),T_{N-2}^{-1}{\dots} T_2^{-1}(B_1 )]_q & \mbox{if $N{+}2{\le} j{\le} 2(N{-}1)$.}
      \end{cases}
  \end{align}  
\end{lem}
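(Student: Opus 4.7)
The plan is to evaluate the iterated Lusztig automorphisms defining $B_{\beta_j}$ in \eqref{eq:Bbetaj} step by step, using three tools throughout: first, the explicit formulas $T_i^{-1}(F_j) = F_j$ when $a_{ij} = 0$ (with $i \ne j$) and $T_i^{-1}(F_j) = [F_i, F_j]_q$ when $a_{ij} = -1$, together with \eqref{eq:Ti-B} on $B_1$; second, the compatibility $T_i^{-1}([a, b]_q) = [T_i^{-1}(a), T_i^{-1}(b)]_q$; and third, the elementary identity
\begin{align*}
[[a, b]_q, c]_q = [a, [b, c]_q]_q \qquad \text{whenever } ac = ca,
\end{align*}
which follows by direct expansion of both sides.

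For $2 \le j \le N - 1$ I induct on $j$. The base case $j = 2$ is precisely \eqref{eq:Ti-B}. For the inductive step, $B_{\beta_{j+1}} = T_{j+1}^{-1}([F_j, Y]_q)$ with $Y = [F_{j-1}, \dots [F_2, B_1]_q \dots]_q$. Since $j + 1 \ge 3$, the operator $T_{j+1}^{-1}$ fixes $B_1$ and each $F_k$ with $k \le j - 1$, so $T_{j+1}^{-1}(Y) = Y$, while $T_{j+1}^{-1}(F_j) = [F_{j+1}, F_j]_q$. The commutator identity then applies, as $F_{j+1}$ commutes with $Y$ by the Serre relations and the defining relations of $\cB_\bc$ from Section \ref{sec:gen-rels}. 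The cases $j = N$ and $j = N + 1$ are handled identically by applying $T_N^{-1}$ and $T_{N-1}^{-1}$ in succession to $B_{\beta_{N-2}}$, using $a_{N-1, N} = 0$ so that $T_{N-1}^{-1}(F_N) = F_N$, and invoking the commutator identity with the outer $F$-factor commuting with $Y' = [F_{N-3}, \dots [F_2, B_1]_q \dots]_q$.

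For $N + 2 \le j \le 2(N-1)$ I set $k = 2N - j$ and induct on $j$. The inductive step $j \ge N + 3$ (so $k \le N - 3$) is clean: $B_{\beta_j} = T_k^{-1}(B_{\beta_{j-1}})$, and since $T_k^{-1}$ commutes with every $T_m$ for $m \ge k + 2$, it passes through $T_N T_{N-1} \dots T_{k+2}$ to act on $F_{k+1}$ as $T_k^{-1}(F_{k+1}) = [F_k, F_{k+1}]_q = T_{k+1}(F_k)$, producing the desired left factor $T_N T_{N-1} \dots T_{k+1}(F_k)$. For the right factor, $T_k^{-1}(B_{\beta_{N-2}}) = B_{\beta_{N-2}}$: one moves $T_k^{-1}$ rightward through the defining word $T_{N-2}^{-1} \dots T_2^{-1}$ of $B_{\beta_{N-2}}$, using commutativity past $T_m^{-1}$ for $|k - m| \ge 2$ and a single application of the braid relation $T_k^{-1} T_{k+1}^{-1} T_k^{-1} = T_{k+1}^{-1} T_k^{-1} T_{k+1}^{-1}$, reducing eventually to $T_{k+1}^{-1}(B_1) = B_1$.

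The main obstacle is the remaining base case $j = N + 2$ (where $k = N - 2$), for which the commutativity argument above breaks down. Here I first apply the commutator identity twice to rewrite $B_{\beta_{N+1}} = [[F_N, [F_{N-1}, F_{N-2}]_q]_q, B_{\beta_{N-3}}]_q$, valid because both $F_N$ and $F_{N-1}$ commute with $B_{\beta_{N-3}}$. Then $B_{\beta_{N+2}} = T_{N-2}^{-1}(B_{\beta_{N+1}})$ is expanded with the crucial simplification $T_{N-2}^{-1}([F_{N-1}, F_{N-2}]_q) = T_{N-2}^{-1}(T_{N-2}(F_{N-1})) = F_{N-1}$, since $[F_{N-1}, F_{N-2}]_q = T_{N-2}(F_{N-1})$ by \eqref{eq:ij-ji}. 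Combined with $T_{N-2}^{-1}(F_N) = [F_{N-2}, F_N]_q$ and $T_{N-2}^{-1}(B_{\beta_{N-3}}) = B_{\beta_{N-2}}$ from \eqref{eq:Bbetaj}, this assembles into $B_{\beta_{N+2}} = [[[F_{N-2}, F_N]_q, F_{N-1}]_q, B_{\beta_{N-2}}]_q$, and a direct evaluation confirming $T_N T_{N-1}(F_{N-2}) = [[F_{N-2}, F_N]_q, F_{N-1}]_q$ completes the proof.
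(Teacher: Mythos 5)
Your proposal is correct and follows essentially the same route as the paper: both evaluate the defining words \eqref{eq:Bbetaj} directly using \eqref{eq:ij-ji}, \eqref{eq:Ti-B} and the braid relations, the paper doing this in one telescoping chain for $N+2\le j\le 2(N-1)$ and dismissing $2\le j\le N+1$ in a line. Your inductive organisation merely makes explicit the ingredients the paper leaves implicit (the rebracketing identity $[[a,b]_q,c]_q=[a,[b,c]_q]_q$ for commuting $a,c$, and the fixed-point property $T_k^{-1}(B_{\beta_{N-2}})=B_{\beta_{N-2}}$ for $k\le N-3$), and all of these steps check out.
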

\begin{proof}
  For $2\le j \le N+1$ Equation \eqref{eq:Bbetaj2} follows from Equations \eqref{eq:ij-ji}, \eqref{eq:Ti-B}. In the case $N+2\le j \le 2(N-1)$ we again write $k=2N-j-1$ and get
  \begin{align*}
    B_{\beta_j}&= T_{k+1}^{-1} \dots T_{N-2}^{-1} T_N^{-1} \dots T_2^{-1}(B_1)\\
    &=T_{k+1}^{-1}\dots T_{N-2}^{-1}\big([T_N^{-1}T_{N-1}^{-1}(F_{N-2}),T^{-1}_{N-3}\dots T_{2}^{-1}(B_1)]_q\big)\\
    &=T_{k+1}^{-1}\dots T_{N-3}^{-1}\big([T_N T_{N-1} (F_{N-2}),T^{-1}_{N-2}\dots T_{2}^{-1}(B_1)]_q\big)\\
    &=[T_N \dots T_{k+2} (F_{k+1}),T^{-1}_{N-2}\dots T_{2}^{-1}(B_1)]_q
  \end{align*}
  where we made repeated use of Equation \eqref{eq:ij-ji}.
\end{proof}
\begin{rema}\label{rem:q=1}
  Lemma \ref{lem:Bbetaj2} allows us to informally identify the limit of the root vectors $B_{\beta_j}$ for $q \rightarrow 1$. Recall the notation from Sections \ref{sec:rootChevalley} and \ref{sec:symPair}. By \eqref{eq:f1+tf1} the generator $B_1\in \cB_\bc$ is a $q$-analogue of the root vector $b_1\in \sofrak(2N-1)$ corresponding to the root $\vep_1$. Similarly, the generators $F_i\in \cB_\bc$ for $i\in X$ are $q$-analogues of the Chevalley generators $f_{\alpha_i}\in \sofrak(2N)$ and hence of $f_{\gamma_{i-1}}\in \sofrak(2N-1)$ if $i\le N-1$. Hence the first equation in \eqref{eq:2Nin2N1} shows that $B_{\beta_j}$ for $1\le j\le N-1$ are $q$-analogues of the elements $b_j\in \sofrak(2N-1)$. In particular,
\begin{align*}
  B_{\beta_{N-1}}=T^{-1}_{N-1}T^{-1}_{N-2}\dots T^{-1}_{2}(B_1)
\end{align*}
is a $q$-analogue of the Chevalley generator $e_{\gamma_{N-1}}\in \sofrak(2N-1)$, see also Equation \eqref{eq:eN-1}. Similarly, by Equation \eqref{eq:fN-1},
\begin{align*}
   B_{\beta_{N}}=T^{-1}_{N}T^{-1}_{N-2}\dots T^{-1}_{2}(B_1)
\end{align*}
is a $q$-analogue of $-f_{\gamma_{N-1}}\in \sofrak(2N-1)$.
We will make these argument precise in Section \ref{sec:Bc-special} using specialisation.
\end{rema}
The interpretation of $B_{\beta_{N-1}}$ and $-B_{\beta_N}$ as $q$-analogues of the Chevalley generators corresponding to the simple roots $\pm\gamma_{N-1}$ is confirmed by the following Lemma which also identifies $q$-analogues of the remaining simple root vectors. Let $Q^\vee_{2N-1}=\Z\{h_{\gamma_j}\,|\,j=1, \dots, N-1\}$ be the coroot lattice of $\sofrak(2N-1)$. By Equations \eqref{eq:etaxi} and \eqref{eq:etaxi2} the embedding $\eta$ induces a group homomorphism $\eta:Q^\vee_{2N-1}\rightarrow Q^\vee$.
\begin{lem}\label{lem:root-vectors}
  For $i=2\dots, N-1$ and any $h\in Q^\vee_{2N-1}$ the following relations hold:
  \begin{align}
    K_{\eta(h)} E_i K_{\eta(h)}^{-1} &= q^{\gamma_{i-1}(h)}E_i,&
    K_{\eta(h)} B_{\beta_{N-1}} K_{\eta(h)}^{-1}&= q^{\gamma_{N-1}(h)} B_{\beta_{N-1}}, \label{eq:root-vectors1}\\
    K_{\eta(h)} F_i K_{\eta(h)}^{-1} &= q^{-\gamma_{i-1}(h)} F_i,&
    K_{\eta(h)} B_{\beta_{N}} K_{\eta(h)}^{-1}&= q^{-\gamma_{N-1}(h)} B_{\beta_{N}}\label{eq:root-vectors2}.
  \end{align}
  Moreover, we have
  \begin{align}
     K_{\eta(h)} E_N K_{\eta(h)}^{-1} &= q^{(\gamma_{N-2}+2\gamma_{N-1})(h)}E_i,\label{eq:EN-root-vector}\\
    K_{\eta(h)} F_N K_{\eta(h)}^{-1}&= q^{-(\gamma_{N-2}+2\gamma_{N-1})(h)} F_N.\label{eq:FN-root-vector}
  \end{align}  
\end{lem}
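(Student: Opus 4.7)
The proof splits naturally into two types of cases. For the classical generators $E_i, F_i$ with $i\in X$, the relations \eqref{eq:root-vectors1}, \eqref{eq:root-vectors2}, \eqref{eq:EN-root-vector}, \eqref{eq:FN-root-vector} follow immediately from \eqref{eq:KEKF} once we establish, as linear functionals on $Q^\vee_{2N-1}$, the identities
\begin{align*}
  \alpha_i\circ\eta &= \gamma_{i-1}\quad (i=2,\dots,N-1),&
  \alpha_N\circ\eta &= \gamma_{N-2}+2\gamma_{N-1}.
\end{align*}
By linearity these reduce to checking values on the basis $\{h_{\gamma_j}\}_{j=1}^{N-1}$. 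Using $\eta(h_{\gamma_j})=h_{\alpha_{j+1}}$ for $j\le N-2$ from \eqref{eq:etaxi} and $\eta(h_{\gamma_{N-1}}) = h_{\alpha_N}-h_{\alpha_{N-1}}$ from \eqref{eq:etaxi2}, the left-hand sides become entries (or differences of entries) of the $D_N$ Cartan matrix, and the match with the $B_{N-1}$ Cartan matrix is an explicit verification. The case of $\alpha_N$ is where the factor of $2$ appears, reflecting the fact that $\gamma_{N-2}(h_{\gamma_{N-1}})=-2$ because $\gamma_{N-1}$ is the short root of $B_{N-1}$.

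For the twisted root vectors $B_{\beta_{N-1}}$ and $B_{\beta_N}$, set $L:=\eta(Q^\vee_{2N-1})\subset Q^\vee$. The first observation is that $L$ coincides with the full coroot sublattice $Q^\vee_X:=\Z\{h_{\alpha_i} : i\in X\}$: the elements $\eta(h_{\gamma_j})$ for $j\le N-2$ supply $h_{\alpha_2},\dots,h_{\alpha_{N-1}}$, and $h_{\alpha_N}=(h_{\alpha_N}-h_{\alpha_{N-1}})+h_{\alpha_{N-1}}$ also lies in $L$. In particular $L$ is $W_X$-invariant. The key technical step is to show that $B_1$ is homogeneous under conjugation by $\{K_h : h\in L\}$. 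Writing $B_1 = F_1 - c_1 T_{w_X}(E_1)K_1^{-1}$, the two summands have $U^0$-weights $-\alpha_1$ and $w_X(\alpha_1)$ respectively, so we need $w_X(\alpha_1)+\alpha_1$ to vanish on $L$. Using the Satake computation at the end of Section~\ref{sec:symPair} one finds $w_X(\alpha_1)+\alpha_1 = 2\epsilon_1$ in the $\epsilon$-basis of $D_N$, and $2\epsilon_1$ pairs to zero with every $h_{\alpha_i}$ for $i\in X$. Hence $B_1$ has a well-defined $K_L$-weight, namely the restriction of $-\alpha_1$ to $L$.

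Since $T_w$ for $w\in W_X$ is an algebra automorphism with $T_w(K_h)=K_{w(h)}$, and $L$ is $W_X$-invariant, $T_w^{\pm 1}$ transforms $K_L$-homogeneous elements of weight $\mu|_L$ into $K_L$-homogeneous elements of weight $\mu\circ w^{\mp 1}|_L$. With $w = s_2 s_3\cdots s_{N-1}$ one computes $w^{-1}(\alpha_1) = \alpha_1+\alpha_2+\cdots+\alpha_{N-1}$, so $B_{\beta_{N-1}} = T_w^{-1}(B_1)$ has $K_L$-weight the restriction of $-(\alpha_1+\alpha_2+\cdots+\alpha_{N-1})$. Pulling back via $\eta$ and evaluating at each $h_{\gamma_j}$ using the identities of the first part (together with the analogous computation $\alpha_1\circ\eta = -\omega_1$, which sends $h_{\gamma_1}\mapsto -1$ and all other $h_{\gamma_j}$ to zero) yields exactly $\gamma_{N-1}$. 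The analogous computation for $B_{\beta_N}$ uses $w' = s_2\cdots s_{N-2}s_N$, for which $w'^{-1}(\alpha_1) = \alpha_1+\alpha_2+\cdots+\alpha_{N-2}+\alpha_N$; after pullback the contribution $\alpha_N\circ\eta = \gamma_{N-2}+2\gamma_{N-1}$ collapses the sum to $-\gamma_{N-1}$.

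The main obstacle is the homogeneity of $B_1$ under $K_L$; the identity $w_X(\alpha_1)+\alpha_1 = 2\epsilon_1$ vanishing on $Q^\vee_X$ is what allows the braid-transport argument to produce a single $K_L$-weight for $B_{\beta_{N-1}}$ and $B_{\beta_N}$ in spite of $B_1$ being a sum of two $U^0$-weight vectors of different $U^0$-weights. Everything else is routine bookkeeping with Cartan matrices and the Lusztig action on the group-like part.
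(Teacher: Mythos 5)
Your proposal is correct and takes essentially the same route as the paper: conjugation by $K_{\eta(h)}$ is transported through the braid operators defining $B_{\beta_{N-1}}$ and $B_{\beta_N}$, and everything reduces to evaluating Weyl-transformed roots on the coroots $\eta(h_{\gamma_j})$, exactly as in the paper's proof. The only differences are organizational: the $K_L$-homogeneity of $B_1$ that you derive from $w_X(\alpha_1)+\alpha_1=2\epsilon_1$ is already available as one of the quoted defining relations of $\cB_\bc$, namely $K_iB_1K_i^{-1}=q^{-(\alpha_i,\alpha_1)}B_1$ for $i\in X$, and the paper evaluates $-\alpha_1\big(s_2\cdots s_{N-2}s_{N-1}(\eta(h_{\gamma_j}))\big)$ directly instead of first expanding $w^{-1}(\alpha_1)$ into simple roots and pulling back via the identities $\alpha_i\circ\eta=\gamma_{i-1}$.
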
  
\begin{proof}
  We check the relations \eqref{eq:root-vectors1}, the relations \eqref{eq:root-vectors2} follow analogously. It suffices to verify the relations \eqref{eq:root-vectors1} for $h=h_{\gamma_j}$ with $j=1,\dots,N-1$. For $j=1, \dots, N-2$ we have
  \begin{align*}
    K_{\eta(h_{\gamma_j})} E_i K_{\eta(h_{\gamma_j})}^{-1} =q^{\alpha_i(\eta(h_{\gamma_j}))} E_i
    =q^{\alpha_i(h_{\alpha_{j+1}})} E_i =q^{\gamma_{i-1}(h_{\gamma_{j}})} E_i. 
  \end{align*}  
  Moreover, we have
  \begin{align*}
    K_{\eta(h_{\gamma_{N-1}})} E_i K_{\eta(h_{\gamma_{N-1}})}^{-1} =q^{\alpha_i(\eta(h_{\gamma_{N-1}}))} E_i
    =q^{\alpha_i(h_{\alpha_N}-h_{\alpha_{N-1}})} E_i= q^{\gamma_{i-1}(h_{\gamma_{N-1}})} E_i. 
  \end{align*}  
  This proves the first equation in \eqref{eq:root-vectors1}.
  Next we use the definition \eqref{eq:Bbetaj} of $B_{\beta_{N-1}}$ to calculate
   \begin{align}
     K_{\eta(h)} B_{\beta_{N-1}} K_{\eta(h)}^{-1} &=q^{-s_{N-1} s_{N-2}\dots s_{2}(\alpha_1)(\eta(h))} B_{\beta_{N-1}}\nonumber \\
     &= q^{-\alpha_1( s_2 \dots s_{N-2}s_{N-1}(\eta(h)))} B_{\beta_{N-1}} \label{eq:step}. 
   \end{align}
   For $j=1, \dots, N-1$ we have
   \begin{align*}
     -\alpha_1( s_2 \dots s_{N-2}s_{N-1}(\eta(h_{\gamma_{j}})))&=
     \begin{cases}
       2 & \mbox{if $j=N-1$}\\ -1 & \mbox{if $j=N-2$}\\  0 & \mbox{else} 
     \end{cases}\\
     &= \gamma_{N-1}(h_{\gamma_{j}}).
   \end{align*}
   Inserting the above Equation into \eqref{eq:step} we obtain the second equation in \eqref{eq:root-vectors1}.

   To verify Equation \eqref{eq:EN-root-vector} we consider $j=1, \dots, N-2$ and calculate
   \begin{align}
      K_{\eta(h_{\gamma_j})} E_N K_{\eta(h_{\gamma_j})}^{-1} =q^{\alpha_N(\eta(h_{\gamma_j}))} E_N
      =q^{\alpha_N(h_{\alpha_{j+1}})} E_i =q^{-\delta_{j,N-3}} E_N,\label{eq:EN1}\\
      K_{\eta(h_{\gamma_{N-1}})} E_N K_{\eta(h_{\gamma_{N-1}})}^{-1} =q^{\alpha_N(\eta(h_{\gamma_{N-1}}))} E_N
      =q^{\alpha_N(h_{\alpha_{N}}-h_{\alpha_{N-1}} )} E_N =q^{2} E_N. \label{eq:EN2}
   \end{align}
   On the other hand, as $\gamma_{N-2}+2\gamma_{N-1}=\epsilon_{N-2}+\epsilon_{N-1}$ we obtain
   \begin{align*}
     (\gamma_{N-2}+2\gamma_{N-1})(h_{\gamma_j})=\begin{cases}
     -1 & \mbox{if $j=N-3$,}\\
     2  & \mbox{if $j=N-1$,}\\
     0 & \mbox{else.}
     \end{cases}
   \end{align*}
   Comparison of the above with \eqref{eq:EN1} and \eqref{eq:EN2} proves Equation \eqref{eq:EN-root-vector}. Equation \eqref{eq:FN-root-vector} now follows from \eqref{eq:KEKF}.
\end{proof}  
\subsection{The PBW-basis for $\cB_\bc$}
Recall the definition \eqref{eq:FJ-def} of the PBW-monomials $F_\cJ\in U^-$ for $\cJ\in \N_0^{N(N-1)}$. Similarly, we use the root vectors \eqref{eq:Bbetaj} to define PBW-monomials for $\cB_\bc$ by
\begin{align}\label{eq:BJ-def}
  B_\cJ=B_{\beta_{N(N-1)}}^{j_{N(N-1)}} \dots  B_{\beta_2}^{j_2}B_{\beta_1}^{j_1} \qquad \mbox{ for $\cJ=(j_1, \dots, j_{N(N-1)})\in \N_0^{N(N-1)}$.}
\end{align}  
Recall the algebra isomorphism $\varphi:\cA\rightarrow \gr(\cB_\bc)$ from Section \ref{sec:standardFilt}. Lemma \ref{lem:grT} and Equations \eqref{eq:Fbetaj}, \eqref{eq:Bbetaj} imply that $\varphi(F_{\beta_j})=B_{\beta_j}$. This in turn implies that for any $\cJ=(j_1,\dots,j_{N(N-1)})\in \N_0^{N(N-1)}$ we have
\begin{align}\label{eq:varphiFJ}
  \varphi(\cF_\cJ)=B_\cJ\in \cF_j(\cB_\bc)/\cF_{j-1}(\cB_\bc) \qquad \mbox{with $j=\sum_{\ell=1}^{2N-2}j_\ell$.}
\end{align}
This observation is the main ingredient in the proof of the following Proposition.
\begin{prop}\label{prop:Bc-basis}
  The set $\{B_\cJ\,|\,\cJ\in \N_0^{N(N-1)}\}$ is a basis of $B_\bc$ as a right (or left) $U^0_X\cM_X^+$-module. 
\end{prop}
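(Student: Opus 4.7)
The plan is to deduce the statement from the analogous fact about $\cA$ via the filtered-graded isomorphism $\varphi:\cA\to \gr(\cB_\bc)$ of Section \ref{sec:standardFilt}. First I would establish the statement at the graded level. The triangular decomposition \eqref{eq:Atriang}, combined with the standard PBW basis $\{F_\cJ\}$ of $U^-$ from \eqref{eq:FJ-def}, shows that $\{F_\cJ\mid \cJ\in\N_0^{N(N-1)}\}$ is a basis of $\cA$ as a right $U^0_X\cM_X^+$-module. Since $\varphi$ acts as the identity on $\cM_X$ (and hence on $U^0_X\cM_X^+$), it is right $U^0_X\cM_X^+$-linear as well as a graded algebra isomorphism, and \eqref{eq:varphiFJ} then immediately yields that $\{B_\cJ\mid \cJ\in\N_0^{N(N-1)}\}$ is a basis of $\gr(\cB_\bc)$ as a right $U^0_X\cM_X^+$-module.

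Next, I would lift this statement to $\cB_\bc$ itself by the standard filtered-graded argument. The filtration $\cF_\ast(\cB_\bc)$ is bounded below and exhaustive, since $\cB_\bc$ is generated in degrees $0$ and $1$. For spanning, take $b\in \cF_n(\cB_\bc)$ and induct on $n$, with the case $n<0$ vacuous. The class $\bar b\in \cF_n/\cF_{n-1}$ can be written uniquely as a finite sum $\sum_{\cJ}\overline{B_\cJ}\,m_\cJ$ with $m_\cJ\in U^0_X\cM_X^+$ and indices $\cJ$ satisfying $\sum_{\ell\le 2(N-1)}j_\ell=n$; then $b-\sum_\cJ B_\cJ m_\cJ\in \cF_{n-1}$ and the inductive hypothesis applies. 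For linear independence, suppose $\sum_\cJ B_\cJ m_\cJ=0$ is a finite nontrivial relation with $m_\cJ\in U^0_X\cM_X^+$, and let $n$ be the maximum filtration degree among the $B_\cJ$ with $m_\cJ\ne 0$; passing to $\cF_n/\cF_{n-1}$ yields a nonzero relation among the $\overline{B_\cJ}$, contradicting the previously established basis property of $\gr(\cB_\bc)$. Iterating drives all $m_\cJ$ to zero.

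The left $U^0_X\cM_X^+$-module analogue follows by the same strategy applied to a left-handed triangular decomposition $U^0_X\cM_X^+\cdot U^-=\cA$, which can be derived from \eqref{eq:Atriang} by applying the algebra antiautomorphism $\sigma$; alternatively, the left-module basis can be transported from the right-module one across the involutive antiautomorphism $\sigma^B$ of \eqref{eq:sigmaB-def}, which preserves $U^0_X\cM_X^+$ and $\cB_\bc$ and is compatible with the filtration. The substantive content of the proof is entirely packaged in the isomorphism $\varphi$ and in the explicit identification \eqref{eq:varphiFJ}; the main thing to keep in mind is that $\varphi$ is not only an algebra isomorphism but also a $U^0_X\cM_X^+$-module isomorphism, after which the filtered-graded lift is routine.
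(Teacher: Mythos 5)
Your argument is correct and follows essentially the same route as the paper: the paper likewise deduces from the triangular decomposition \eqref{eq:Atriang} that $\{F_\cJ\}$ is a right $U^0_X\cM_X^+$-basis of $\cA$, then uses Lemma \ref{lem:grT} and \eqref{eq:varphiFJ} to conclude that the $B_\cJ$ of filtration degree at most $j$ form a basis of $\cF_j(\cB_\bc)$, which is exactly your filtered-graded lift written more tersely. The left-module case is handled in the paper by swapping tensor factors in \eqref{eq:Atriang}, matching your primary suggestion (your alternative via $\sigma^B$ is unnecessary and would anyway only give a basis consisting of the images $\sigma^B(B_\cJ)$, not the $B_\cJ$ themselves).
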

\begin{proof}
  By the PBW Theorem for $U^-$ and the triangular decomposition \eqref{eq:Atriang}, the algebra $\cA$ is a free right $U^0_X\cM_X^+$-module with basis $\{F_\cJ\,|\,\cJ\in \N_0^{N(N-1)}\}$ given by \eqref{eq:FJ-def}. Lemma \ref{lem:grT} and Equation \eqref{eq:varphiFJ} imply that the set
  \begin{align*}
    \{B_\cJ\,|\,\cJ=(j_1, \dots,j_{N(N-1)})\in \N_0^{N(N-1)}, \sum_{\ell=1}^{2N-2}j_\ell\le j\}
  \end{align*}
forms a basis of $\cF_j(\cB_\bc)$ as a right $U^0_X\cM_X^+$-module. This proves the proposition considering $\cB_\bc$ as a right $U^0_X\cM_X^+$-module. The corresponding statement about $\cB_\bc$ as a left $U^0_X\cM_X^+$-module is obtained by swapping tensor factors in the triangular decomposition \eqref{eq:Atriang}.  
\end{proof}
In analogy to \eqref{eq:Fbetaj-def} we define positive root vectors $E_{\beta_j}\in U^+$ by
\begin{align*}
  E_{\beta_j}= T_{i_1} \dots T_{i_{j-1}}(E_{i_j}) \qquad \mbox{for $j=1, \dots, N(N-1)$.}
\end{align*}
For $\cI=(i_{2N-1},\dots,i_{N(N-1)})\in \N_0^{(N-1)(N-2)}$ define 
\begin{align*}
  E_{\cI}=E_{\beta_{N(N-1)}}^{i_{N(N-1)}} \dots  E_{\beta_{2N}}^{i_{2N}}E_{\beta_{2N-1}}^{i_{2N-1}}.
\end{align*}  
By the PBW-Theorem for $\cM^+_X$ the ordered monomials $E_\cI$ for $\cI\in \N_0^{(N-1)(N-2)}$ form a $\qfield$-vector space basis of $\cM_X^+$. Moreover, the set $\{K_\beta\,|\,\beta\in Q_X\}$ is a $\qfield$-vector space basis of $U^0_X$.
Combining these facts with Proposition \ref{prop:Bc-basis} we obtain the desired PBW-Theorem for $\cB_\bc$.
\begin{thm}\label{thm:PBW-Bc}
  The set of ordered monomials
  \begin{align*}
    \{B_\cJ K_\beta E_\cI\,|\,\cJ\in \N_0^{N(N-1)}, \beta\in Q_X, \cI\in \N_0^{(N-1)(N-2)}\}
  \end{align*}
  is a $\qfield$-vector space basis of $\cB_\bc$.
\end{thm}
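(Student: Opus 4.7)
The plan is to combine the three basis results already in hand: Proposition \ref{prop:Bc-basis}, which says the ordered monomials $\{B_\cJ\,|\,\cJ\in\N_0^{N(N-1)}\}$ form a basis of $\cB_\bc$ as a right $U^0_X\cM_X^+$-module; the Poincar\'e-Birkhoff-Witt theorem for $\cM_X^+\cong U_q(\sofrak(2N-2))^+$, which gives the $\qfield$-basis $\{E_\cI\}$; and the description of $U^0_X$ as the group algebra of $Q_X$, which gives the basis $\{K_\beta\,|\,\beta\in Q_X\}$.

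The key intermediate step is to show that the multiplication map
\begin{align*}
  U^0_X\ot \cM_X^+ \longrightarrow U^0_X\cM_X^+
\end{align*}
is a $\qfield$-linear isomorphism, so that the set $\{K_\beta E_\cI\,|\,\beta\in Q_X,\,\cI\in\N_0^{(N-1)(N-2)}\}$ is a $\qfield$-basis of $U^0_X\cM_X^+$. First I would note surjectivity: every element of $U^0_X\cM_X^+$ is a sum of products $K u^+$ with $K\in U^0_X$ and $u^+\in \cM_X^+$, and by \eqref{eq:KEKF} we can move all $K$-factors to the left of any monomial in the $E_i$, so such products are spanned by the $K_\beta E_\cI$. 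Injectivity follows from the triangular decomposition \eqref{eq:triang} of $\Uq$: the multiplication map $U^-\ot U^0\ot U^+\to \Uq$ is an isomorphism, so restricting the second and third tensor factors to $U^0_X$ and $\cM_X^+$ gives an injection onto $U^0_X\cM_X^+\subset U^0 U^+$.

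Combining these facts, the proof is then immediate. By Proposition \ref{prop:Bc-basis}, any element of $\cB_\bc$ can be written uniquely as
\begin{align*}
  \sum_\cJ B_\cJ \cdot m_\cJ, \qquad m_\cJ\in U^0_X\cM_X^+,
\end{align*}
with only finitely many $m_\cJ$ nonzero. Expanding each $m_\cJ$ uniquely as a $\qfield$-linear combination of the $K_\beta E_\cI$ by the preceding paragraph, we obtain a unique expansion of each element of $\cB_\bc$ in terms of the set $\{B_\cJ K_\beta E_\cI\}$, which establishes the theorem.

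I expect the only conceptual point to verify carefully is the injectivity of $U^0_X\ot \cM_X^+\to U^0_X\cM_X^+$, i.e.\ checking that the restriction of the triangular decomposition of $\Uq$ stays injective on this smaller subspace; this is a standard consequence of the weight space decomposition, since $U^0_X\cM_X^+\subset U^0 U^+$ and the ambient map $U^0\ot U^+\to U^0U^+$ is already an isomorphism. No genuinely new input beyond the results cited above is required, so there is no substantive obstacle.
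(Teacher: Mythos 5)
Your proposal is correct and follows essentially the same route as the paper: the paper also combines Proposition \ref{prop:Bc-basis} with the PBW basis $\{E_\cI\}$ of $\cM_X^+$ and the basis $\{K_\beta\,|\,\beta\in Q_X\}$ of $U^0_X$, with the isomorphism $U^0_X\ot\cM_X^+\cong U^0_X\cM_X^+$ (via the triangular decomposition of $\Uq$) left implicit. Your explicit verification of that last point is the only difference, and it is a reasonable piece of added care rather than a deviation.
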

By Lemma \ref{lem:root-vectors} and Lemma \ref{lem:Bbetaj2} the root vectors $B_{\beta_1},\dots,B_{\beta_{N-1}}$ correspond to positive roots of $\sofrak(2N-1)$ while the root vectors $B_{\beta_N},\dots,B_{\beta_{N(N-1)}}$ correspond to negative roots. Hence it is reasonable to define
\begin{align*}
  \cB_\bc^- &= \mathrm{span}_\qfield\{B_{\beta_{N(N-1)}}^{j_{N(N-1)}}\dots B_{\beta_N}^{j_N}\}\,|\,(j_N,\dots,j_{N(N-1)})\in \N_0^{(N-1)^2}\},\\
  \cB_\bc^+&=\mathrm{span}_\qfield\{B_{\beta_{N-1}}^{j_{N-1}}\dots B_{\beta_1}^{j_1} E_{\cI}\}\,|\,(j_1,\dots,j_{N-1})\in\N_0^{N-1}, \cI\in \N_0^{(N-1)(N-2)}\}
\end{align*}
and we obtain a triangular decomposition of $\cB_\bc$.
\begin{cor}\label{cor:Bc-triang}
  The multiplication map $\cB_\bc^-\ot U^0_X \ot \cB_\bc^+\rightarrow \cB_\bc$ is a linear isomorphism. 
\end{cor}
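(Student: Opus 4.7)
The proof should be a direct reorganisation of the basis produced by Theorem \ref{thm:PBW-Bc}; essentially one just has to observe how the PBW monomial $B_\cJ$ factors compatibly with the splitting into positive and negative root vectors, and then commute $K_\beta$ past the positive part using weight considerations.

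First I would extract the bases of $\cB_\bc^-$ and $\cB_\bc^+$ from Theorem \ref{thm:PBW-Bc}. For any $\cJ=(j_1,\dots,j_{N(N-1)})\in\N_0^{N(N-1)}$, write $\cJ=(\cJ^+,\cJ^-)$ with $\cJ^+=(j_1,\dots,j_{N-1})$ and $\cJ^-=(j_N,\dots,j_{N(N-1)})$, and set
\[
B^-_{\cJ^-}=B_{\beta_{N(N-1)}}^{j_{N(N-1)}}\dots B_{\beta_N}^{j_N},\qquad B^+_{\cJ^+}=B_{\beta_{N-1}}^{j_{N-1}}\dots B_{\beta_1}^{j_1},
\]
so that $B_\cJ=B^-_{\cJ^-}B^+_{\cJ^+}$. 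The linear independence of $\{B_\cJ K_\beta E_\cI\}$ in Theorem \ref{thm:PBW-Bc} immediately gives linear independence of the $B^-_{\cJ^-}$ (so these form a basis of $\cB_\bc^-$ by definition) and of the $B^+_{\cJ^+}E_\cI$ (so these form a basis of $\cB_\bc^+$).

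Next, for a basis element $B^-_{\cJ^-}\otimes K_\beta\otimes B^+_{\cJ^+}E_\cI$ of $\cB_\bc^-\otimes U^0_X\otimes \cB_\bc^+$, I would compute its image under multiplication. By Lemma \ref{lem:root-vectors}, each root vector $B_{\beta_i}$ for $i=1,\dots,N-1$ is a weight vector under conjugation by $U^0_X$, so there is an integer $c(\beta,\cJ^+)$ with $K_\beta B^+_{\cJ^+}=q^{c(\beta,\cJ^+)}B^+_{\cJ^+}K_\beta$. Hence
\[
B^-_{\cJ^-}K_\beta B^+_{\cJ^+}E_\cI = q^{c(\beta,\cJ^+)}\,B^-_{\cJ^-}B^+_{\cJ^+}K_\beta E_\cI = q^{c(\beta,\cJ^+)}\, B_\cJ K_\beta E_\cI.
\]
This exhibits a bijection, up to invertible scalars, between the natural basis of $\cB_\bc^-\otimes U^0_X\otimes \cB_\bc^+$ (indexed by $(\cJ^-,\beta,(\cJ^+,\cI))$) and the basis $\{B_\cJ K_\beta E_\cI\}$ of $\cB_\bc$ from Theorem \ref{thm:PBW-Bc}, so the multiplication map is a linear isomorphism.

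There is no real obstacle here, since the corollary is essentially a repackaging of Theorem \ref{thm:PBW-Bc}. The one point worth noting is that the factor $E_\cI$ is absorbed into $\cB_\bc^+$ rather than being pulled out separately, so the weight-commutation of $K_\beta$ only needs to traverse the $B^+_{\cJ^+}$-part; this is consistent with the natural $\Z$-grading on $\cB_\bc$ coming from conjugation by $U^0_X$.
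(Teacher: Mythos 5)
Your proof is correct and is exactly the repackaging of Theorem \ref{thm:PBW-Bc} that the paper intends: the corollary is stated there as an immediate consequence of the PBW basis and the definitions of $\cB_\bc^\pm$, and your argument (basis goes to basis up to nonzero powers of $q$) spells out precisely that. One small point of citation: Lemma \ref{lem:root-vectors} only records conjugation by $K_{\eta(h)}$ for $h\in Q^\vee_{2N-1}$, which is an index-two sublattice of $Q^\vee_X$, so the $q$-commutation of an arbitrary $K_\beta$ with $\beta\in Q_X$ past $B^+_{\cJ^+}$ is more directly justified by the defining relation $K_iB_1K_i^{-1}=q^{-(\alpha_i,\alpha_1)}B_1$ for $i\in X$ together with Lemma \ref{lem:Bbetaj2}, which exhibits each $B_{\beta_j}$ with $j\le N-1$ as an iterated $q$-commutator of $B_1$ with the generators $F_i$, all of which are $U^0_X$-weight vectors.
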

\begin{rema}
  Note that $\cB_\bc^+$ and $\cB_\bc^-$ are not subalgebras of $\cB_\bc$. Indeed, $B_{\beta_1}=B_1$ and $B_{\beta_2}=T_2^{-1}(B_1)$ belong to $\cB_\bc^+$. However, the deformed $q$-Serre relation \eqref{eq:qSerre-Bc1} implies that $[B_{\beta_2},B_{\beta_1}]_{q^{-1}}\notin \cB_\bc^+$ because of the term $F_2\cZ_1$.
\end{rema}
\begin{rema}
  The definitions of $\cB_\bc^+$ and $\cB_\bc^-$ are symmetric as the ordered monomials $B_{\beta_{N(N-1)}}^{j_{N(N-1)}}\dots B_{\beta_{2N-1}}^{j_{2N-1}}$ form a PBW-basis of $\cM_X^-$.
\end{rema}  
\subsection{Commutation of root vectors}\label{sec:commutation}
For later use we investigate some $q$-commutators of root vectors. Let $\cM_{X,+}^+$ be the augmentation ideal of $\cM_X^+$, that is, the ideal generated by the set $\{E_i\,|\,i\in X\}$.
\begin{lem}\label{lem:BblBbk}
  For $1\le \ell<m\le N-1$ the relation $[B_{\beta_\ell},B_{\beta_m}]_q \in \cM_X \cM_{X,+}^+$ holds.
\end{lem}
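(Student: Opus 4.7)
My plan is to reduce to the classical case via the standard filtration and then handle residual lower-order terms inductively. First, I would show that $[F_{\beta_\ell}, F_{\beta_m}]_q = 0$ in $U^-$. In the orthonormal basis, the positive roots are $\beta_j = \epsilon_1 - \epsilon_{j+1}$ for $1\le j\le N-1$, so $\beta_\ell+\beta_m = 2\epsilon_1 - \epsilon_{\ell+1} - \epsilon_{m+1}$, while the intermediate roots $\beta_{\ell+1},\dots,\beta_{m-1}$ have support in $\{\epsilon_1\}\cup\{\epsilon_{\ell+2},\dots,\epsilon_m\}$. Since $\epsilon_{\ell+1}$ and $\epsilon_{m+1}$ never appear in any non-negative integer combination of the intermediate roots, the Levendorski\v{i}--Soibelman straightening formula, combined with $(\beta_\ell,\beta_m)=1$, forces $[F_{\beta_\ell}, F_{\beta_m}]_q = 0$. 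Applying the isomorphism $\varphi:\cA\to\gr(\cB_\bc)$ from Section~\ref{sec:standardFilt}, which sends $F_{\beta_j}$ to $B_{\beta_j}$ in the appropriate filtration quotient by Equation~\eqref{eq:varphiFJ}, we deduce $[B_{\beta_\ell}, B_{\beta_m}]_q \in \cF_1(\cB_\bc)$.

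To upgrade this filtration bound to the sharper claim, I would proceed by double induction. The base case $\ell=1$, $m=2$ is direct from the defining deformed Serre relation \eqref{eq:qSerre-Bc1}: expanding the nested $q$-commutator yields $[B_1, [F_2, B_1]_q]_q = -q(B_1^2 F_2 - (q+q^{-1}) B_1 F_2 B_1 + F_2 B_1^2)$, which by \eqref{eq:qSerre-Bc1} equals $q^2 c_1\bigl( F_2 \cZ_1 + (q\, r_2(\cZ_1) K_2 + q^{-1}\,{}_2r(\cZ_1) K_2^{-1})/(q-q^{-1})^2\bigr)$; every summand lies in $\cM_X \cM_{X,+}^+$ since $\cZ_1, r_2(\cZ_1), {}_2r(\cZ_1)$ are all in $\cM_{X,+}^+$. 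For $\ell\ge 2$, I would apply the Lusztig automorphism $T_\ell T_{\ell-1}\cdots T_2$ of $\cB_\bc$ together with the braid-group identities $T_\ell\cdots T_2(B_{\beta_\ell}) = B_{\beta_{\ell-1}}$ (with the convention $B_{\beta_0}:=B_1$) and $T_\ell\cdots T_2(B_{\beta_m}) = B_{\beta_m}$ for $m>\ell$, established using relations of the form $T_i T_{i+1}^{-1} T_i^{-1} = T_{i+1}^{-1} T_i^{-1} T_{i+1}$, to reduce inductively to the case $\ell=1$. Finally, at $\ell=1$ one inducts on $m$ via the Jacobi-type identity $[B_1, [F_m, X]_q]_q = [F_m, [B_1, X]_q]_q$, which holds for $m\ge 3$ because $B_1$ commutes with $F_m$, applied with $X = B_{\beta_{m-1}}$.

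The principal obstacle is ensuring that each inductive reduction genuinely lands in the specific left ideal $\cM_X \cM_{X,+}^+$: the braid automorphisms $T_i^{-1}$ do not preserve this ideal in general (for instance $T_i^{-1}(E_i) = -K_i^{-1} F_i$), and commutation with $F_m$ can a priori generate $U^0_X$-valued contributions from $[E_m, F_m] = (K_m - K_m^{-1})/(q-q^{-1})$ whenever $E_m$ factors appear. Verifying that such unwanted terms actually cancel --- analogously to the way the base-case Serre relation balances $F_2\cZ_1$ against the $K_2^{\pm 1}$ corrections --- requires careful explicit bookkeeping of the shape of the commutator at each inductive step, and this is the technical heart of the argument.
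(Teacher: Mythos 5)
Your proposal is incomplete at exactly the point where the lemma has content, and you say so yourself: the ``careful explicit bookkeeping'' you defer is not a technicality but the whole proof. Your first paragraph (Levendorski\v{i}--Soibelman straightening plus the isomorphism $\varphi:\cA\to\gr(\cB_\bc)$) is correct but only gives $[B_{\beta_\ell},B_{\beta_m}]_q\in\cF_1(\cB_\bc)$, which is far weaker than membership in $\cM_X\cM_{X,+}^+$ (it does not even exclude terms involving one factor of $B_1$), and it is never used again. Your base case $\ell=1$, $m=2$ is fine and is the same use of the deformed Serre relation \eqref{eq:qSerre-Bc1} as in the paper. But both of your inductive steps are left open: the reduction in $\ell$ transports the conclusion through $T_2^{-1}\cdots T_\ell^{-1}$, and these automorphisms do not preserve the left ideal $\cM_X\cM_{X,+}^+$; the induction in $m$ via $[B_1,[F_m,X]_q]_q=[F_m,[B_1,X]_q]_q$ requires right multiplication by $F_m$ to stay in $\cM_X\cM_{X,+}^+$, which fails in general because $[E_m,F_m]$ produces Cartan terms. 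You identify both obstructions but offer no mechanism for the cancellations, so the argument does not close.

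The missing idea is to replace ``membership in the ideal'' by weight bookkeeping in $\cM_X$. The paper conjugates the whole commutator in one step, $[B_{\beta_\ell},B_{\beta_m}]_q=T_\ell^{-1}\cdots T_2^{-1}T_m^{-1}\cdots T_3^{-1}\bigl([B_1,T_2^{-1}(B_1)]_q\bigr)$, and identifies $[B_1,T_2^{-1}(B_1)]_q$ via \eqref{eq:qSerre-Bc1} as lying in $\qfield F_2\cZ_1+\qfield K_2 r_2(\cZ_1)+\qfield K_2^{-1}\,{}_2r(\cZ_1)$, a sum of two $\mathrm{Ad}(U^0_X)$-weight components of $\cM_X$. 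The operators $T_i$ with $i\in X$ preserve $\cM_X$ and permute its $Q_X$-weight spaces by the Weyl group, so the commutator lands in $\cM_X(\cM_X)_\mu+\cM_X(\cM_X)_\nu$ with explicitly computed $\mu,\nu\in Q_X^+\setminus\{0\}$; the triangular decomposition of $\cM_X$ then gives $(\cM_X)_\mu,(\cM_X)_\nu\subset\cM_X\cM_{X,+}^+$. No stability of the ideal under $T_i^{-1}$ is needed. If you strengthen your induction hypothesis to record the precise nonzero $Q_X^+$-weight of the commutator (rather than bare ideal membership), your scheme can be repaired, but at that point it becomes the paper's weight argument in inductive disguise.
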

\begin{proof}
  To avoid notational confusion we assume $\ell\ge 2$, but the argument for $\ell=1$ is similar. By definition of the root vectors we have
  \begin{align}
    [B_{\beta_\ell},B_{\beta_m}]_q&=[T_\ell^{-1}\dots T_2^{-1}(B_1), T_m^{-1}\dots T_2^{-1}(B_1)]_q\nonumber\\
    &=T_\ell^{-1}\dots T_2^{-1} T_m^{-1}\dots T_3^{-1}([B_1,T_2^{-1}(B_1)]_q). \label{eq:BblBbk}
  \end{align}
  By \eqref{eq:Ti-B} and \eqref{eq:qSerre-Bc1} we have
  \begin{align*}
    [B_1,T_2^{-1}(B_1)]_q\in \qfield F_2 \cZ_1 + \qfield K_2 r_2(\cZ_1) + \qfield K_2^{-1} {}_2r(\cZ_1)
  \end{align*}
  where $\cZ_1\in (\cM_X^+)_{w_X(\alpha_1)-\alpha_1}$ and $r_2(\cZ_1), {}_2r(\cZ_1)\in (\cM_X^+)_{w_X(\alpha_1)-\alpha_1-\alpha_2}$. Combining the above formula with Equation \eqref{eq:BblBbk} shows that
  \begin{align*}
    [B_{\beta_\ell},B_{\beta_m}]_q\in \cM_X (\cM_X)_\mu + \cM_X (\cM_X)_\nu
  \end{align*}
  where
  \begin{align*}
    \mu&=s_{\ell}\dots s_2 s_m\dots s_3(w_X(\alpha_1)-\alpha_1)\\
    &=w_X(s_\ell \dots s_2(\alpha_1))-s_\ell \dots s_2(\alpha_1)\\
    &= 2(\alpha_{\ell+1}+\dots+\alpha_{N-2})+\alpha_{N-1} + \alpha_{N}\\
    \\
    \nu &=s_{\ell}\dots s_2 s_m\dots s_3(w_X(\alpha_1)-\alpha_1-\alpha_2)\\
    &=\mu-s_\ell\dots s_2 s_m\dots s_3(\alpha_2)\\
    &=\mu-(\alpha_{\ell+1}+\dots + \alpha_m)
  \end{align*}
  As $0\neq \mu,\nu\in Q_X^+$, the triangular decomposition of $\cM_X$ implies that $(\cM_X)_\mu$ and $(\cM_X)_\nu$ are contained in $\cM_X \cM_{X,+}^+$. This completes the proof of the Lemma.
\end{proof}
As an immediate consequence of the above Lemma we obtain the following result.
\begin{cor}\label{lem:inBcE}
  For each $m \in \{N-1,N\}$ we have
  $[B_{\beta_{N-2}},B_{\beta_m}]_q\in \cM_X \cM_{X,+}^+$.
\end{cor}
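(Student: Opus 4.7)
The case $m=N-1$ is immediate from Lemma \ref{lem:BblBbk} applied with $(\ell,m)=(N-2,N-1)$, which lies in the permitted range $1\le \ell<m\le N-1$ (assuming $N\ge 3$).

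The plan for the case $m=N$ is to reduce to the already established case $m=N-1$ by exploiting the $\tau$-symmetry manifest in the Satake diagram of type $DII$ (\Cref{fig:DII}), where $\tau$ is the nontrivial diagram automorphism of $\sofrak(2N)$ swapping the simple roots $\alpha_{N-1}$ and $\alpha_N$ and fixing all others. First I would extend $\tau$ to an algebra automorphism $\tau:\Uq\to\Uq$ by $\tau(E_i)=E_{\tau(i)}$, $\tau(F_i)=F_{\tau(i)}$, $\tau(K_i^{\pm 1})=K_{\tau(i)}^{\pm 1}$. Because the defining relations of $\Uq$ and the formulas for the Lusztig braid operators depend only on the Cartan data, one obtains the intertwining property $\tau\circ T_i=T_{\tau(i)}\circ \tau$ for all $i\in I$. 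Since $X=\{2,\dots,N\}$ is $\tau$-stable, the longest element $w_X\in W_X$ is $\tau$-invariant, hence $\tau(T_{w_X}(E_1))=T_{w_X}(E_1)$; combined with $\tau(K_1)=K_1$ this gives $\tau(B_1)=B_1$, so $\tau$ restricts to an algebra automorphism of $\cB_\bc$. Moreover, $\tau$ permutes the generators $\{E_j\,|\,j\in X\}$, so $\tau$ preserves both $\cM_X$ and its augmentation ideal $\cM_{X,+}^+$, and consequently the left ideal $\cM_X\cM_{X,+}^+$.

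Next, applying $\tau$ to the relevant root vectors using the definition \eqref{eq:Bbetaj}, I would observe that $\tau(B_{\beta_{N-2}})=B_{\beta_{N-2}}$ (the defining expression involves only $T$-operators with indices in $\{2,\dots,N-2\}$ together with $B_1$, all of which are $\tau$-fixed), while
$$
\tau(B_{\beta_{N-1}})=T_{N}^{-1}T_{N-2}^{-1}\dots T_2^{-1}(B_1)=B_{\beta_N}.
$$
Hence $[B_{\beta_{N-2}},B_{\beta_N}]_q=\tau\bigl([B_{\beta_{N-2}},B_{\beta_{N-1}}]_q\bigr)$, and since the argument of $\tau$ lies in $\cM_X\cM_{X,+}^+$ by the $m=N-1$ case, so does its image. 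This completes the reduction.

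There is no substantial obstacle here: the only genuine verification needed is the $\tau$-invariance of $B_1$ and the resulting stability of $\cB_\bc$ under $\tau$, both of which follow from the $\tau$-invariance of the subset $X$ and of $w_X$.
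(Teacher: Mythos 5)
Your proof is correct and follows the same route as the paper: the case $m=N-1$ is the special case $(\ell,m)=(N-2,N-1)$ of Lemma \ref{lem:BblBbk}, and the case $m=N$ is obtained by the symmetry interchanging $N-1$ and $N$. The paper simply invokes ``symmetry between $N$ and $N-1$'' without elaboration; your explicit implementation via the diagram automorphism $\tau$ (checking $\tau(B_1)=B_1$, $\tau\circ T_i=T_{\tau(i)}\circ\tau$, $\tau(B_{\beta_{N-1}})=B_{\beta_N}$, and $\tau$-stability of $\cM_X\cM_{X,+}^+$) is exactly the justification left implicit there.
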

\begin{proof}
  The case $m=N-1$ is a special case of the above Lemma. The case $m=N$ holds by symmetry between $N$ and $N-1$. 
\end{proof}
The next Lemma follows from Equation \eqref{eq:Bbetaj2} and the defining relations of $\cB_\bc$ given in Section \ref{sec:gen-rels}.
\begin{lem}\label{lem:EBBE}
  For $i\in X$ and $m=1, \dots,N-1$ we have
  \begin{align*}
    [E_i,B_{\beta_m}]= \begin{cases}
      B_{\beta_{m-1}} K_{m}^{-1} & \mbox{if $i=m$,}\\
      0 & \mbox{if $i\neq m$.}
                     \end{cases}
  \end{align*}  
\end{lem}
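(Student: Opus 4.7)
The plan is to prove the identity by induction on $m$, reducing to the defining relations of $\cB_\bc$. The base case $m=1$ is immediate: $B_{\beta_1} = B_1$ commutes with each $E_i$ for $i \in X$ by the relations of Section \ref{sec:gen-rels}, and since $1 \notin X$ only the vanishing branch of the claim applies. For $2 \le m \le N-1$ I would use the $q$-commutator recursion $B_{\beta_m} = [F_m, B_{\beta_{m-1}}]_q = F_m B_{\beta_{m-1}} - q B_{\beta_{m-1}} F_m$ from Lemma \ref{lem:Bbetaj2} and expand
\begin{align*}
[E_i, B_{\beta_m}] = [E_i, F_m]\, B_{\beta_{m-1}} + F_m [E_i, B_{\beta_{m-1}}] - q [E_i, B_{\beta_{m-1}}] F_m - q B_{\beta_{m-1}} [E_i, F_m],
\end{align*}
where $[E_i, F_m] = \delta_{im}(K_m - K_m^{-1})/(q-q^{-1})$ provides a natural case split.

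If $i \notin \{m-1, m\}$, then both $[E_i, F_m]$ and (by induction) $[E_i, B_{\beta_{m-1}}]$ vanish, yielding $0$ at once. If $i = m$, the inductive hypothesis gives $[E_m, B_{\beta_{m-1}}] = 0$, and the remaining contribution reduces to $[E_m, F_m] B_{\beta_{m-1}} - q B_{\beta_{m-1}} [E_m, F_m]$. To evaluate this, I would argue that $B_{\beta_{m-1}}$ carries a well-defined $U^0_X$-weight corresponding to $-(\alpha_1 + \cdots + \alpha_{m-1})$: this follows inductively, since $B_1$ carries $U^0_X$-weight $-\alpha_1$ and a Lusztig automorphism $T_j^{-1}$ with $j \in X$ reflects $U^0_X$-weights by $s_j$. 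In particular $K_m B_{\beta_{m-1}} K_m^{-1} = q B_{\beta_{m-1}}$, and a short simplification collapses the expression to $B_{\beta_{m-1}} K_m^{-1}$, as claimed.

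The delicate case is $i = m - 1$ (which requires $m \ge 3$). Here $[E_{m-1}, F_m] = 0$, and induction supplies $[E_{m-1}, B_{\beta_{m-1}}] = B_{\beta_{m-2}} K_{m-1}^{-1}$. Commuting $K_{m-1}^{-1}$ past $F_m$ via $K_{m-1}^{-1} F_m = q^{-1} F_m K_{m-1}^{-1}$ (using $a_{m-1, m} = -1$), the expansion reduces to $[E_{m-1}, B_{\beta_m}] = [F_m, B_{\beta_{m-2}}] K_{m-1}^{-1}$, and it remains to show $[F_m, B_{\beta_{m-2}}] = 0$. For $m = 3$ this is the defining relation $F_3 B_1 = B_1 F_3$. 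For $m \ge 4$, I would write $B_{\beta_{m-2}} = T_{m-2}^{-1} \cdots T_2^{-1}(B_1)$ and recast
\begin{align*}
[F_m, B_{\beta_{m-2}}] = T_{m-2}^{-1} \cdots T_2^{-1}\bigl([T_2 \cdots T_{m-2}(F_m), B_1]\bigr);
\end{align*}
since $a_{k,m} = 0$ for $2 \le k \le m-2$ in the $D_N$ Dynkin diagram, one has $T_2 \cdots T_{m-2}(F_m) = F_m$, and the result then follows from the defining relation $F_m B_1 = B_1 F_m$.

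The main obstacle is handling the case $i = m-1$: one must commute the resulting $K_{m-1}^{-1}$ past $F_m$ with the correct scalar and then invoke the invariance of $F_m$ under the relevant Lusztig automorphisms by tracking the adjacency pattern of the $D_N$ Dynkin diagram. The weight computation in the case $i = m$ is routine once one accepts that $U^0_X$-weights transform correctly under Lusztig automorphisms.
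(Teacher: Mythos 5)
Your proof is correct, and it follows essentially the route the paper intends: the paper dispatches this lemma with the one-line remark that it follows from the $q$-commutator expression \eqref{eq:Bbetaj2} and the defining relations of $\cB_\bc$, and your induction on $m$ (splitting into $i=m$, $i=m-1$, and $i\notin\{m-1,m\}$, using the $U^0_X$-weight of $B_{\beta_{m-1}}$ and the relations $[E_i,B_1]=[F_j,B_1]=0$ for $j\ge 3$) is exactly a careful elaboration of that argument. All the individual steps check out, including the weight computation $K_mB_{\beta_{m-1}}K_m^{-1}=qB_{\beta_{m-1}}$ and the reduction of the $i=m-1$ case to $[F_m,B_{\beta_{m-2}}]=0$ via $T_j(F_m)=F_m$ for non-adjacent $j$.
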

Similarly, we need commutation relations between the root vectors $B_{\beta_m}$ and the generators $F_i$ for $i\in X$.
\begin{lem}\label{lem:FBBF}
  The following relations hold in $\cB_\bc$:
  \begin{align}
    [B_{\beta_m},F_m]_q&=0, \qquad \qquad \mbox{for $m=1,\dots,N$}\label{eq:FBBF1}\\
    [F_{m+1},B_{\beta_m}]_{q}&=B_{\beta_{m+1}} \qquad\,\, \mbox{for $m=1,\dots,N-2$,} \label{eq:FBBF2}
  \end{align}
  Moreover, for $m=1,\dots,N$, $i\in X\setminus\{m,m+1\}$ with ($m\neq N-2$ if $i=N$) and ($m\neq N$ if $i=N-1$) the relation
  \begin{align}
    [F_i,B_{\beta_m}]&=0 \label{eq:FBBF3}
  \end{align}
  holds. 
\end{lem}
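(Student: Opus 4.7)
The three relations will be proved in the order \eqref{eq:FBBF2}, \eqref{eq:FBBF1}, \eqref{eq:FBBF3}; each feeds into the next.

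Relation \eqref{eq:FBBF2} is immediate from Lemma \ref{lem:Bbetaj2}: the nested $q$-commutator formula displayed there for $B_{\beta_{m+1}}$ (valid for $m\le N-2$) is exactly $[F_{m+1},B_{\beta_m}]_q$.

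For relation \eqref{eq:FBBF1}, the base case $m=2$ is equivalent to the undeformed quantum Serre relation $F_2^2 B_1 - (q+q^{-1}) F_2 B_1 F_2 + B_1 F_2^2 = 0$ from the presentation of $\cB_\bc$ in Section \ref{sec:gen-rels}, which unpacks as $[F_2,[F_2,B_1]_q]_{q^{-1}} = [F_2,B_{\beta_2}]_{q^{-1}} = 0$, equivalent to $[B_{\beta_2},F_2]_q = 0$. For $m \in \{3,\dots,N-1\}$ the idea is to use $B_{\beta_m} = T_m^{-1}(B_{\beta_{m-1}})$ and apply $T_m$ to the bracket:
\begin{align*}
T_m\!\left([B_{\beta_m},F_m]_q\right) = [B_{\beta_{m-1}},T_m(F_m)]_q = -[B_{\beta_{m-1}},K_m^{-1} E_m]_q.
\end{align*}
A short induction on the definition shows that $B_{\beta_{m-1}}$ is a $U^0_X$-weight vector of weight $-(\alpha_1+\cdots+\alpha_{m-1})$; combined with $(\alpha_{m-1},\alpha_m) = -1$, this gives $B_{\beta_{m-1}} K_m^{-1} = q K_m^{-1} B_{\beta_{m-1}}$, and the right-hand side collapses to $q K_m^{-1} [E_m, B_{\beta_{m-1}}]$, which vanishes by Lemma \ref{lem:EBBE}. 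Since $T_m$ is injective, $[B_{\beta_m},F_m]_q = 0$. The case $m = N$ is identical using $B_{\beta_N} = T_N^{-1}(B_{\beta_{N-2}})$ together with $(\alpha_{N-2},\alpha_N) = -1$.

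Relation \eqref{eq:FBBF3} is proved by induction on $m$. For $m=1$ the claim reduces to the defining relations $[F_i, B_1] = 0$ for $i \in \{3,\dots,N\}$ recorded in Section \ref{sec:gen-rels}. For $m \ge 2$ one writes $B_{\beta_m} = T_m^{-1}(B_{\beta_{m-1}})$ (or $B_{\beta_N} = T_N^{-1}(B_{\beta_{N-2}})$ when $m = N$) and applies $T_m$, obtaining $[F_i, B_{\beta_m}] = T_m^{-1}([T_m(F_i), B_{\beta_{m-1}}])$. When $i\in X$ is not Dynkin-adjacent to $m$ in $D_N$ (so $T_m(F_i) = F_i$), the inductive hypothesis for the smaller index concludes, once one checks that the exception clauses on $(i,m)$ descend consistently. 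The delicate case is when $i$ and $m$ are Dynkin-adjacent but $i \neq m+1$ (so $i = m-1$ for $m\le N-1$, and $i = N-2$ for $m = N$); here $T_m(F_i) = [F_i, F_m]_q$, and the vanishing of $[F_i, B_{\beta_m}]$ reduces to $[[F_i, F_m]_q, B_{\beta_{m-1}}] = 0$ (with the obvious modification when $m = N$). A direct expansion using the previously proved relations \eqref{eq:FBBF1} and \eqref{eq:FBBF2} turns out to be tautological, so one must descend once more by writing $B_{\beta_{m-1}} = T_{m-1}^{-1}(B_{\beta_{m-2}})$ and exploiting the braid relation $T_{m-1} T_m T_{m-1} = T_m T_{m-1} T_m$; this rearrangement eventually reduces the vanishing to a commutator of the form $[E_j, B_{\beta_k}]$ controlled by Lemma \ref{lem:EBBE} together with the inductive hypothesis at strictly smaller indices.

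The main obstacle is this Dynkin-adjacent subcase of \eqref{eq:FBBF3}: relations \eqref{eq:FBBF1} and \eqref{eq:FBBF2} alone produce only a tautological identity, so breaking the circularity requires the extra application of the braid relation and careful tracking of how the $T$-operators transform the simple root vectors involved.
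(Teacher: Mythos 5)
Your handling of \eqref{eq:FBBF2} and of \eqref{eq:FBBF1} for $m\ge 2$ is correct, though for \eqref{eq:FBBF1} you take a different route from the paper: you peel off a single braid operator, use $T_m(F_m)=-K_m^{-1}E_m$, the $U^0_X$-weight of $B_{\beta_{m-1}}$, and $[E_m,B_{\beta_{m-1}}]=0$ from Lemma \ref{lem:EBBE}, whereas the paper conjugates all the way back to the generator, $[B_{\beta_m},F_m]_q=T_m^{-1}\cdots T_2^{-1}([B_1,T_2\cdots T_m(F_m)]_q)$, and concludes directly from the defining relations $E_jB_1=B_1E_j$ and $K_jB_1K_j^{-1}=q^{-(\alpha_j,\alpha_1)}B_1$. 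Both work; note that the paper first invokes the diagram symmetry exchanging $N-1$ and $N$ to reduce to $m\neq N$, which removes the separate fork bookkeeping you carry along, and that (exactly like the paper's own argument) you only address $m\ge 2$ in \eqref{eq:FBBF1}.

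The genuine gap is the Dynkin-adjacent subcase of \eqref{eq:FBBF3}, i.e.\ $i=m-1$ and $(i,m)=(N-2,N)$, which you yourself identify as the main obstacle but do not prove: ``descend once more \dots\ eventually reduces the vanishing to a commutator of the form $[E_j,B_{\beta_k}]$'' is a gesture rather than a verification, and the endpoint you name is not what the computation yields. Carrying out your own plan one step further: write $B_{\beta_m}=T_m^{-1}T_{m-1}^{-1}(B_{\beta_{m-2}})$, so that $[F_{m-1},B_{\beta_m}]=T_m^{-1}T_{m-1}^{-1}\big([T_{m-1}T_m(F_{m-1}),B_{\beta_{m-2}}]\big)=T_m^{-1}T_{m-1}^{-1}\big([F_m,B_{\beta_{m-2}}]\big)$ by the identity $T_iT_{i+1}(F_i)=F_{i+1}$ of \cite[Remark 8.20]{b-Jantzen96}, and $[F_m,B_{\beta_{m-2}}]=0$ because by \eqref{eq:Bbetaj2} the element $B_{\beta_{m-2}}$ is an iterated bracket in $B_1,F_2,\dots,F_{m-2}$, each of which commutes with $F_m$ by the defining relations; the case $(i,m)=(N-2,N)$ is identical via $T_{N-2}T_N(F_{N-2})=F_N$. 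So the reduction lands on an $F$-commutator covered by your easy case, not on Lemma \ref{lem:EBBE}. The paper sidesteps the induction and the adjacency case split altogether: for $i<m$ it conjugates the whole string, $T_2\cdots T_m(F_i)=F_{i+1}$, hence $[F_i,B_{\beta_m}]=T_m^{-1}\cdots T_2^{-1}([F_{i+1},B_1])=0$ immediately from $F_{i+1}B_1=B_1F_{i+1}$. Your argument is therefore salvageable in two lines, but as written the hardest subcase of \eqref{eq:FBBF3} (together with the unchecked consistency of the exception clauses under your induction) is asserted rather than proved.
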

\begin{proof}
  By symmetry between $N-1$ and $N$ we may assume that $m\neq N$. To verify Equation \eqref{eq:FBBF1} we calculate
  \begin{align*}
    [B_{\beta_m},F_m]_q&=T_m^{-1}\dots T_2^{-1}([B_1,T_2\dots T_m(F_m)]_q)\\
      &=-T_m^{-1}\dots T_2^{-1}([B_1,K_2^{-1}\dots K_m^{-1}T_2\dots T_{m-1}(E_m)]_q)
  \end{align*}
  which vanishes by the defining relations of $\cB_\bc$ in Section \ref{sec:gen-rels}. Relation \eqref{eq:FBBF2} follows from Equation \eqref{eq:Bbetaj2}. For $i>m+1$ Equation \eqref{eq:FBBF3} follows again from the relations in Section \ref{sec:gen-rels}. For $i<m$ we calculate
  \begin{align*}
    [F_i,B_{\beta_m}]&=T_m^{-1}\dots T_2^{-1}([T_2\dots T_iT_{i+1}(F_i),B_{1}])\\
    &=T_m^{-1}\dots T_2^{-1}([F_{i+1},B_1])=0
  \end{align*}
  where we used the relation $T_i T_{i+1}(F_i)=F_{i+1}$ which holds by \cite[Remark 8.20]{b-Jantzen96}.
\end{proof}
By symmetry between $N-1$ and $N$, Formula \eqref{eq:FBBF2} gives $[F_N,B_{\beta_{N-2}}]_q=B_{\beta_N}$. More generally, induction over $m$ gives the following Lemma. Recall the $q$-numbers $[m]_q=(q^m-q^{-m})/(q-q^{-1})$ for $m\in \N_0$.
\begin{lem}\label{lem:BN-2F}
  For all $m\in \N_0$ the relation
  \begin{align*}
    B_{\beta_{N-2}}F_N^m = q^{-m} F_N^m B_{\beta_{N-2}} - q^{-1}[m]_q F_N^{m-1} B_{\beta_N}
  \end{align*}
  holds in $\cB_\bc$.
\end{lem}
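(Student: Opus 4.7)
The plan is to prove this by induction on $m$, using the two ingredients already made available in Lemma \ref{lem:FBBF}: the base commutation arising from symmetry, and the $q$-commutation of $B_{\beta_N}$ with $F_N$.

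The base case $m=0$ is trivial (since $[0]_q=0$ annihilates the second term). For $m=1$, the symmetry argument between the nodes $N-1$ and $N$ applied to \eqref{eq:FBBF2} yields $[F_N,B_{\beta_{N-2}}]_q=B_{\beta_N}$, which, unfolded, gives $F_NB_{\beta_{N-2}}-qB_{\beta_{N-2}}F_N=B_{\beta_N}$, and hence
\begin{equation*}
B_{\beta_{N-2}}F_N=q^{-1}F_NB_{\beta_{N-2}}-q^{-1}B_{\beta_N},
\end{equation*}
which is the claimed formula with $[1]_q=1$.

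For the inductive step, I would assume the stated formula at level $m$ and multiply on the right by $F_N$. The first summand $q^{-m}F_N^mB_{\beta_{N-2}}\cdot F_N$ is processed by inserting the $m=1$ identity, producing a contribution $q^{-m-1}F_N^{m+1}B_{\beta_{N-2}}-q^{-m-1}F_N^mB_{\beta_N}$. The second summand requires moving $F_N$ past $B_{\beta_N}$, which is precisely the content of \eqref{eq:FBBF1} in the case $m=N$: from $[B_{\beta_N},F_N]_q=0$ we obtain $B_{\beta_N}F_N=qF_NB_{\beta_N}$, so that
\begin{equation*}
-q^{-1}[m]_qF_N^{m-1}B_{\beta_N}F_N=-[m]_qF_N^mB_{\beta_N}.
\end{equation*}
Collecting the two contributions to the $F_N^mB_{\beta_N}$ coefficient yields $-(q^{-m-1}+[m]_q)F_N^mB_{\beta_N}$, and it remains to verify the numerical identity
\begin{equation*}
q^{-m-1}+[m]_q=q^{-1}[m+1]_q,
\end{equation*}
which follows immediately from the definition $[k]_q=(q^k-q^{-k})/(q-q^{-1})$ by a short computation.

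There is no real obstacle here; the only point of care is making sure the auxiliary relation $B_{\beta_N}F_N=qF_NB_{\beta_N}$ is correctly extracted from \eqref{eq:FBBF1} (one should double-check signs and the convention $[X,Y]_q=XY-qYX$ used throughout the paper), and that the $q$-number identity is computed with the right sign so as to produce the $q^{-1}[m+1]_q$ prefactor. Once these are in place, the induction closes cleanly.
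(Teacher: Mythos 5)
Your proof is correct and is essentially the paper's argument: the paper derives $[F_N,B_{\beta_{N-2}}]_q=B_{\beta_N}$ by the symmetry between $N-1$ and $N$ from \eqref{eq:FBBF2} and then simply states that the lemma follows by induction on $m$, which is exactly the induction you carry out (using $[B_{\beta_N},F_N]_q=0$ from \eqref{eq:FBBF1} and the identity $q^{-m-1}+[m]_q=q^{-1}[m+1]_q$). The conventions you checked ($[X,Y]_q=XY-qYX$) are consistent with the paper's usage, so the computation closes as claimed.
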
  
\section{Finite-dimensional irreducible representations of $\cB_\bc$}
  The identification of root vectors for $\cB_\bc$ and the triangular decomposition in Corollary \ref{cor:Bc-triang} allow us to follow a Verma module approach towards the classification of finite dimensional $\cB_\bc$-modules. To this end we show in Section \ref{sec:missing} that $\{B_{\beta_{N-1}}, B_{\beta_{N}}, (K_NK_{N-1}^{-1})^{\pm 1}\}$ act as a copy of $U_q(\slfrak(2))$ on suitable subspaces of representations. We consider embeddings of Verma modules in Section \ref{sec:submodules} and complete the classification of finite dimensional, simple $\cB_\bc$ modules in Section \ref{sec:filt-grad} using a filtered-graded argument. First, however, we briefly discuss signs for the action of $U^0_X$ which can occur in complete analogy to the quantum group case in \cite[5.1]{b-Jantzen96}.
\subsection{Representations of type $\one$}
Let $V$ be a finite-dimensional representation of the $\qfield$-algebra $\cB_\bc$. By restriction, $V$ can be considered as a module over the subalgebra $\cM_X$. Hence, the action of $U_X^0$ on $V$ is diagonalisable.

Assume that $\mathrm{char}(\qfield)\neq 2$. Let $P_{2N-1}=\Hom(Q^\vee_{2N-1},\Z)$ be the weight lattice of $\sofrak(2N-1)$. For every group homomorphism $\chi\in \Hom(Q^\vee_{2N-1},\{\pm 1\})$ and any $\lambda\in P_{2N-1}$ define
\begin{align}\label{eq:V-lambda-chi}
  V_{\lambda,\chi}=\{v\in V\,|\, K_{\eta(h)} v = \chi(h) q^{\lambda(h)}v \quad \mbox{for all $h\in Q^\vee_{2N-1}$}\}.
\end{align}\label{eq:V-decomp}
Following \cite[5.2]{b-Jantzen96} we have
\begin{align}
  V=\bigoplus_{\chi} V^\chi \qquad \mbox{where}\quad V^\chi=\bigoplus_{\lambda\in P_{2N-1}} V_{\lambda,\chi}
\end{align}
where the first sum is over all group homomorphisms $\chi\in \Hom(Q^\vee_{2N-1},\{\pm 1\})$. As the generator $B_1$ of $\cB_\bc$ $q$-commutes with all $K_i$ for $i\in X$, the $\cM_X$-submodule $V^\chi$ is a $\cB_\bc$-submodule of $V$. We say that $V$ is of type $\chi$ if $V=V^\chi$. 
Let $\cB_\bc\mbox{-mod}_{f,\chi}$ denote the category of finite dimensional $\cB_\bc$ modules of type $\chi$.

For every $\chi\in \Hom(Q^\vee_{2N-1},\{\pm 1\})$ there exists a one-dimensional $\Uq$-module $\qfield_\chi=\qfield {\mathbbm 1}_\chi$ defined by
\begin{align*}
  u {\mathbbm 1}_\chi &= \vep(u) {\mathbbm 1}_\chi \quad\mbox{for $u=E_i, F_i$ for $i\in I$, and for $u=K_1$,}\\
  K_{\eta(h)}{\mathbbm 1}_\chi &= \chi(h) {\mathbbm 1}_\chi \quad \mbox{for all $h\in Q^\vee_{2N-1}$.}
\end{align*}  
As $\cB_\bc\subset \Uq$ is a right coideal subalgebra, the category of finite dimensional $\cB_\bc$-modules is a right module category over the category of finite dimensional $\Uq$-modules. As in \cite[5.2]{b-Jantzen96} this allows us to formulate an equivalence of categories. Let ${\mathbf 1}\in \Hom(Q^\vee_{2N-1},\{\pm 1\})$ be the trivial group homomorphism defined by ${\mathbf 1}(h)=1$ for all $h\in Q^\vee_{2N-1}$.
\begin{prop}\label{prop:cat-equiv}
  Assume that $\mathrm{char}(\qfield)\neq 2$ and let $\chi\in \Hom(Q^\vee_{2N-1},\{\pm 1\})$. The functor
  \begin{align*}
    \cB_\bc\mbox{\upshape{-mod}}_{f,{\mathbf 1}} \rightarrow \cB_\bc\mbox{\upshape{-mod}}_{f,\chi}, \qquad V\mapsto V\ot \qfield_\chi
  \end{align*}
  is an equivalence of categories. 
\end{prop}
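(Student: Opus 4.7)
The plan is to emulate the standard argument for the analogous statement in Jantzen \cite[5.2]{b-Jantzen96} by exhibiting the functor $W \mapsto W \otimes \qfield_\chi$ (from $\cB_\bc\text{-mod}_{f,\chi}$ to $\cB_\bc\text{-mod}_{f,\mathbf{1}}$) as a quasi-inverse. The key observation is that $\chi$ takes values in $\{\pm 1\}$ so that $\chi^2 = \mathbf{1}$, and consequently $\qfield_\chi \otimes \qfield_\chi \cong \qfield_{\mathbf{1}}$ as $\Uq$-modules, where $\qfield_{\mathbf{1}}$ is the trivial $\Uq$-module acting via the counit.

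First I would verify that the functor is well-defined. As $\cB_\bc \subset \Uq$ is a right coideal subalgebra, the tensor product $V \otimes \qfield_\chi$ inherits a natural $\cB_\bc$-module structure through $\kow$, and it remains finite-dimensional. To confirm that it has type $\chi$ when $V$ has type ${\mathbf 1}$, I use $\kow(K_{\eta(h)}) = K_{\eta(h)} \otimes K_{\eta(h)}$: for any $v \in V_{\lambda,\mathbf{1}}$, a direct computation gives
\[
K_{\eta(h)}(v \otimes {\mathbbm 1}_\chi) = \chi(h) q^{\lambda(h)}\, v \otimes {\mathbbm 1}_\chi,
\]
so that $v \otimes {\mathbbm 1}_\chi \in (V \otimes \qfield_\chi)_{\lambda,\chi}$ in the sense of \eqref{eq:V-lambda-chi}. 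Summing over the weight space decomposition shows that the whole module is of type $\chi$.

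Next, I would invoke the coassociativity of $\kow$, which upgrades the tensor product of $\Uq$-modules with $\cB_\bc$-modules to an associative operation, to produce for any $V$ a natural chain of $\cB_\bc$-module isomorphisms
\[
(V \otimes \qfield_\chi) \otimes \qfield_\chi \;\cong\; V \otimes (\qfield_\chi \otimes \qfield_\chi) \;\cong\; V \otimes \qfield_{\mathbf{1}} \;\cong\; V,
\]
where the middle isomorphism is the one noted above, and the last follows from the fact that $\qfield_{\mathbf{1}}$ is the trivial $\Uq$-module. Since these isomorphisms are natural in $V$, both compositions of the functor with itself are naturally isomorphic to the identity, yielding the asserted equivalence. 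Fully faithfulness is then automatic.

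Because the argument is essentially formal, amounting to an invocation of the invertibility of $\chi$ in the group $\Hom(Q^\vee_{2N-1},\{\pm 1\})$ together with the module-category structure recalled just before the proposition, I do not expect a conceptual obstacle. The only bookkeeping points that require care are the verification that the weight-space refinement \eqref{eq:V-lambda-chi} behaves additively under tensoring by $\qfield_\chi$, and observing that the hypothesis $\mathrm{char}(\qfield) \neq 2$ is used precisely to make the sign group $\{\pm 1\}$ nontrivial so that the notion of type is meaningful.
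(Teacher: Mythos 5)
Your proposal is correct and follows essentially the same route the paper intends: the paper gives no separate proof, deferring to the standard argument of \cite[5.2]{b-Jantzen96}, which is exactly what you reconstruct (tensoring again with $\qfield_\chi$ is a quasi-inverse since $\chi^2=\mathbf{1}$ and $\qfield_\chi\ot\qfield_\chi\cong\qfield_{\mathbf 1}$, using the coideal module-category structure and the weight-space computation for $K_{\eta(h)}$). The only minor caveat is your closing remark on $\mathrm{char}(\qfield)\neq 2$: its real role is in the type decomposition $V=\bigoplus_\chi V^\chi$ preceding the proposition, not in the equivalence itself, but this does not affect the argument.
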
  
Recall the involutive algebra anti-automorphism $\sigma^B$ defined by \eqref{eq:sigmaB-def}. Let $V$ be a finite dimensional $\cB_\bc$-module. We define a $\cB_\bc$-module structure on the linear dual space $V^\ast$ by
\begin{align}\label{eq:dual-def}
  (bf)(v)=f(\sigma^B(b)v) \qquad \mbox{for all $f\in V^\ast$, $b\in \cB_\bc$, $v\in V$.}
\end{align}  
Note that for any $\chi\in \Hom(Q^\vee_{2N-1},\{\pm 1\})$ the category $\cB_\bc\mbox{-mod}_{f,\chi}$ is closed under taking duals. 

Proposition \ref{prop:cat-equiv} shows that we lose no information if we restrict to type $\one$ $\cB_\bc$-modules. Moreover, this category is closed under taking duals. We shall henceforth only consider finite dimensional $\cB_\bc$-modules of type $\one$.

\begin{rema}
  If $\mathrm{char}(\qfield)=2$ then all types coincide. In this case the category of finite dimensional $\cB_\bc$-modules of type $\one$ is just the category of all finite dimensional $\cB_\bc$-modules.
\end{rema}  
\subsection{Existence of a highest weight vector}
Let $V$ be a finite dimensional $\cB_\bc$-module of type $\one$. For any $\lambda\in P_{2N-1}$ we write $V_\lambda=V_{\lambda,\one}$ and hence the second equation in \eqref{eq:V-decomp} becomes
$$V=\bigoplus_{\lambda\in P_{2N-1}}V_\lambda.$$
We call the elements of $V_\lambda$ weight vectors of weight $\lambda$.
The commutation relations in Lemma \ref{lem:root-vectors} imply the following result.
\begin{lem}
  Let $V$ be a representation of $\cB_\bc$. For $i=2,\dots,N-1$ and $\lambda\in P_{2N-1}$ the following relations hold:
  \begin{align*}
    E_i V_\lambda &\subseteq V_{\lambda+\gamma_{i-1}}, &
    F_i V_\lambda &\subseteq V_{\lambda-\gamma_{i-1}},\\
    B_{\beta_{N-1}} V_\lambda & \subseteq V_{\lambda+ \gamma_{N-1}}, &
    B_{\beta_{N}} V_\lambda & \subseteq V_{\lambda- \gamma_{N-1}},\\
    E_N V_\lambda &\subseteq V_{\lambda+\gamma_{N-2}+2\gamma_{N-1}}, &
    F_N V_\lambda &\subseteq V_{\lambda-\gamma_{N-2}-2\gamma_{N-1}}.
  \end{align*}
\end{lem}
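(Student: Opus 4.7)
The plan is to deduce the lemma directly from Lemma \ref{lem:root-vectors} by a standard weight space calculation. For each of the six operators appearing in the statement, Lemma \ref{lem:root-vectors} tells us how it is conjugated by $K_{\eta(h)}$ for $h \in Q^\vee_{2N-1}$: namely $K_{\eta(h)} X K_{\eta(h)}^{-1} = q^{\mu_X(h)} X$, where $\mu_X \in P_{2N-1}$ equals $\gamma_{i-1}$ for $X = E_i$ (with $i = 2,\dots,N-1$), $-\gamma_{i-1}$ for $X = F_i$, $\gamma_{N-1}$ for $X = B_{\beta_{N-1}}$, $-\gamma_{N-1}$ for $X = B_{\beta_N}$, and $\pm(\gamma_{N-2}+2\gamma_{N-1})$ for $X = E_N$, $F_N$ respectively.

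Given a weight vector $v \in V_\lambda$, so that $K_{\eta(h)} v = q^{\lambda(h)} v$ for all $h \in Q^\vee_{2N-1}$, I would then compute
\begin{align*}
  K_{\eta(h)} (Xv) = (K_{\eta(h)} X K_{\eta(h)}^{-1}) K_{\eta(h)} v = q^{\mu_X(h)} q^{\lambda(h)} X v = q^{(\lambda + \mu_X)(h)} Xv,
\end{align*}
which shows $Xv \in V_{\lambda + \mu_X}$. Applying this case by case to each of the six operators yields the six containments in the statement. Since these weight space containments are claimed for all $v \in V_\lambda$, the lemma follows by linearity.

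There is no genuine obstacle here: the content of the lemma is purely the translation of the commutation relations in Lemma \ref{lem:root-vectors} into the language of weight-space shifts. The only small point worth being explicit about is that the weight decomposition $V = \bigoplus_{\lambda \in P_{2N-1}} V_\lambda$ only involves $K_{\eta(h)}$ for $h \in Q^\vee_{2N-1}$ (and not the full $U^0_X$), so the calculation above uses precisely the conjugation relations proved in Lemma \ref{lem:root-vectors} and no more.
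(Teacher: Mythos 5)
Your proof is correct and is exactly the argument the paper intends: the paper offers no separate proof, simply stating that the conjugation relations of Lemma \ref{lem:root-vectors} imply the result, which is precisely the weight-shift computation you carry out. Nothing further is needed.
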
  
The above Lemma implies the existence of a highest weight vector.
\begin{lem}\label{lem:hwv}
  Let $V\neq \{0\}$ be a finite dimensional $\cB_\bc$-module. There exists a weight vector $v\in V\setminus\{0\}$ such that
  \begin{align*}
    E_iv&=0 \qquad \mbox{for $i=2, \dots, N$},\\
    B_{\beta_{N-1}}v&=0.
  \end{align*}
  Moreover, the weight $\lambda\in P_{2N-1}$ of $v$ satisfies $\lambda(h_{\gamma_j})\ge 0$ for $j=1, \dots, N-2$.
\end{lem}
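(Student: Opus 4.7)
The plan is to use the weight space decomposition to select a vector of maximal weight, and then apply a rank-one $U_q(\slfrak(2))$ argument for the dominance condition.

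First, since $V$ is of type $\one$ and $U^0_X$ acts diagonalisably, we have a weight decomposition $V = \bigoplus_{\lambda \in P_{2N-1}} V_\lambda$. I would equip $P_{2N-1}$ with the standard partial order $\mu \leq \lambda \iff \lambda - \mu \in Q^+_{2N-1} = \sum_{j=1}^{N-1}\N_0 \gamma_j$. Since $V$ is finite dimensional and nonzero, only finitely many weights occur, so a maximal $\lambda$ with $V_\lambda \neq \{0\}$ exists. Pick any $v \in V_\lambda \setminus \{0\}$.

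Next, I would invoke the preceding lemma to observe that each of the operators in question raises the weight by an element of $Q^+_{2N-1} \setminus \{0\}$: $E_i$ shifts weights by $\gamma_{i-1}$ for $i = 2,\dots, N-1$, $E_N$ shifts by $\gamma_{N-2}+2\gamma_{N-1}$, and $B_{\beta_{N-1}}$ shifts by $\gamma_{N-1}$. By maximality of $\lambda$, each of these six operators must annihilate $v$, which gives the first half of the lemma.

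For the dominance claim on $j \in \{1,\dots,N-2\}$, I would use the standard copy of $U_q(\slfrak(2))$ generated by $\{E_{j+1}, F_{j+1}, K_{j+1}^{\pm 1}\}$ inside $\cB_\bc$ (note $j+1 \in X$, so this is undeformed Drinfeld-Jimbo $\slfrak_2$). From Section \ref{sec:symPair} we have $\eta(h_{\gamma_j}) = h_{\alpha_{j+1}}$, so $K_{j+1}$ acts on $v$ by $q^{\lambda(h_{\gamma_j})}$. Since $E_{j+1}v = 0$ and $V$ restricts to a finite dimensional $U_q(\slfrak(2))$-module of type $\one$, the classical rank-one theory of \cite[Chapter 2]{b-Jantzen96} forces $\lambda(h_{\gamma_j}) \in \N_0$, as required.

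I do not anticipate any real obstacle at this stage: the lemma is essentially a repackaging of finite-dimensional weight theory for $\cM_X$, together with a textbook $\slfrak_2$-dominance argument in each direction where $\cB_\bc$ already contains an undeformed $U_q(\slfrak(2))$. What is deliberately absent from the statement is the short-root dominance $\lambda(h_{\gamma_{N-1}}) \geq 0$; this is the genuinely delicate point of the theory and will require the $\slfrak_2$-triple $\{B_{\beta_{N-1}}, B_{\beta_N}, (K_N K_{N-1}^{-1})^{\pm 1}\}$ acting on the subspace $H(V)$ foreshadowed in the introduction, rather than the pure weight argument used here.
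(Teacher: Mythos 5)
Your proposal is correct and follows essentially the same route as the paper: the existence of $v$ comes from picking a maximal weight and using the weight-shift lemma (the paper's proof simply cites this), and the dominance $\lambda(h_{\gamma_j})\ge 0$ for $j\le N-2$ is exactly the paper's argument via $K_{\eta(h_{\gamma_j})}=K_{j+1}$ and the undeformed copy of $U_q(\slfrak(2))$ generated by $\{E_{j+1},F_{j+1},K_{j+1}^{\pm 1}\}$. The only blemish is the harmless slip of calling the relevant raising operators ``six'' (there are $N$ of them, namely $E_2,\dots,E_N$ and $B_{\beta_{N-1}}$), and your closing remark about the short root $\gamma_{N-1}$ being deferred to the $H(V)$ construction matches the paper's plan.
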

\begin{proof}
  The second statement follows from the fact that $K_{\eta(h_{\gamma_j})}=K_{j+1}$ for $j=1,\dots,N-2$ and that $\{E_{j+1}, F_{j+1}, K^{\pm 1}_{j+1}\}$ generates a copy of $U_q(\slfrak(2))$.
\end{proof}  
\subsection{The commutator $[B_{\beta_{N-1}},B_{\beta_N}]$}
Lemma \ref{lem:hwv} shows that the weight $\lambda$ of a highest weight vector $v$ in a finite-dimensional representation $V$ of $\cB_\bc$ is dominant with respect to $\{h_{\gamma_1}, \dots, h_{\gamma_{N-2}}\}$. To show that $\lambda$ is also dominant with respect to $h_{\gamma_{N-1}}$, we need to investigate the commutator
  \begin{align}\label{eq:Om-def}
     \Omega:=[B_{\beta_{N-1}},B_{\beta_N}].
  \end{align}
  To simplify notation, we define $\Omega_1=[F_1,E_1]$ and
  \begin{align}
    \Omega_j&=[T_j^{-1}\dots T_2^{-1}(F_1),T_j\dots T_2(E_1)] \qquad \mbox{for $j=2, \dots, N-1$,}\label{eq:Omj-def}\\
    \Omega_N&=[T_N^{-1}T_{N-2}^{-1}\dots T_2^{-1}(F_1),T_NT_{N-2}\dots T_2(E_1)].\label{eq:OmN-def}
  \end{align}
The commutator $\Omega$ can be expressed in terms of $\Omega_{N-1}$ and $\Omega_N$.  
\begin{lem}\label{lem:Om1}
  The commutator $\Omega$ given by \eqref{eq:Om-def} satisfies the relation
  \begin{align*}
    \Omega= c_1\big(\Omega_N K_{N-1}^{-1}-\Omega_{N-1} K_N^{-1} \big) K_1^{-1}\dots K_{N-2}^{-1}.
  \end{align*}
  In particular, we have $\Omega\in \Uq_0$.
\end{lem}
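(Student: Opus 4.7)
The idea is to expand each $B_{\beta_j}$ for $j \in \{N-1,N\}$ as $B_{\beta_j} = F_{\beta_j} - c_1 G_j$, where
\[
G_j = T_j^{-1}T_{N-2}^{-1}\cdots T_2^{-1}\bigl(T_{w_X}(E_1)K_1^{-1}\bigr),
\]
then expand $\Omega$ into four pieces, show that the two ``pure'' pieces vanish, and identify the two mixed pieces with $\Omega_{N-1}$ and $\Omega_N$ times the appropriate Cartan factors.

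The first step is to simplify $G_{N-1}$ and $G_N$ using the explicit reduced expression $w_X = \sigma_2\sigma_3\cdots\sigma_{N-1}$. The key observations are that $T_{\sigma_i}(E_1) = E_1$ for $i\ge 3$ (each such $\sigma_i$ involves only $T_k$ with $k\ge 3$, and these commute with the $s_1$-strand), and that $T_{N-1}$ commutes with $T_N$. Cancelling braid factors systematically inside $T_j^{-1}\cdots T_2^{-1}T_{\sigma_2}$ (together with the standard formula $T_i(K_h)=K_{s_i(h)}$ for the Cartan part) gives
\begin{align*}
G_{N-1} &= T_N T_{N-2}\cdots T_2(E_1)\cdot K_1^{-1}K_2^{-1}\cdots K_{N-1}^{-1}, \\
G_N     &= T_{N-1} T_{N-2}\cdots T_2(E_1)\cdot K_1^{-1}K_2^{-1}\cdots K_{N-2}^{-1}K_N^{-1}.
\end{align*}

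Next, expand $\Omega = [B_{\beta_{N-1}},B_{\beta_N}]$ into four pieces. The two ``pure'' pieces vanish. A direct computation gives $\beta_{N-1}=\vep_1-\vep_N$ and $\beta_N=\vep_1+\vep_N$, so $(\beta_{N-1},\beta_N)=0$ and $\beta_{N-1}+\beta_N=2\vep_1$ is not a root; hence $F_{\beta_{N-1}}$ and $F_{\beta_N}$ commute outright (either by Levendorskii--Soibelman straightening, since they are adjacent PBW-root vectors with no intermediate root, or by a direct braid computation). An analogous argument, applied to the positive root vectors $T_NT_{N-2}\cdots T_2(E_1)$ and $T_{N-1}T_{N-2}\cdots T_2(E_1)$ whose weights are $\beta_N$ and $\beta_{N-1}$ respectively, together with a Cartan-shuffle bookkeeping, yields $[G_{N-1},G_N]=0$.

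For the two mixed pieces, commute the Cartan factor of $G_j$ past the opposing $F_{\beta_k}$. The crucial numerical check is that both pairings $\beta_{N-1}(-h_{\alpha_1}-\cdots-h_{\alpha_{N-2}}-h_{\alpha_N})$ and $\beta_N(-h_{\alpha_1}-\cdots-h_{\alpha_{N-1}})$ vanish; this uses the root expansions $\beta_{N-1}=\alpha_1+\cdots+\alpha_{N-1}$ and $\beta_N=\alpha_1+\cdots+\alpha_{N-2}+\alpha_N$ (note that $\alpha_{N-1}$ has coefficient $0$ in $\beta_N$) together with the $D_N$ Cartan data. Hence the Cartans slide through with no $q$-scalar, giving
\[
[F_{\beta_{N-1}},G_N] = \Omega_{N-1}\cdot K_1^{-1}\cdots K_{N-2}^{-1}K_N^{-1}, \qquad [G_{N-1},F_{\beta_N}] = -\Omega_N\cdot K_1^{-1}\cdots K_{N-1}^{-1},
\]
the identifications with $\Omega_{N-1},\Omega_N$ being immediate from the definitions in (\ref{eq:Omj-def})--(\ref{eq:OmN-def}). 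Assembling the four pieces, and using that each $\Omega_j$ has weight $0$ (it is a commutator of root vectors of opposite weights) so commutes with all $K_i$, produces the stated formula. The statement $\Omega\in U_0$ follows at once.

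The main obstacle will be the clean identification of $G_{N-1}$ and $G_N$ in step one: this requires careful bookkeeping of braid cancellations inside $T_{w_X}$, exploiting both the reduced expression $w_X=\sigma_2\cdots\sigma_{N-1}$ and the special fact that $\sigma_i$ for $i\ge 3$ acts trivially on $E_1$. The vanishing of the ``pure'' commutators and the triviality of the Cartan $q$-scalars in the mixed terms are then what makes the final formula as clean as stated.
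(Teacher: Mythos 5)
Your proposal is correct and follows essentially the same route as the paper: expand $B_{\beta_{N-1}}$ and $B_{\beta_N}$ into their $F$-parts and braid-simplified twisted parts $c_1\,T_NT_{N-2}\cdots T_2(E_1)K_1^{-1}\cdots K_{N-1}^{-1}$, resp.\ $c_1\,T_{N-1}T_{N-2}\cdots T_2(E_1)K_1^{-1}\cdots K_{N-2}^{-1}K_N^{-1}$, kill the two pure commutators, and identify the mixed terms with $\Omega_{N-1}$ and $\Omega_N$ after sliding the Cartan factors through via the vanishing pairings $\beta_{N-1}(h_{\alpha_1}+\cdots+h_{\alpha_{N-2}}+h_{\alpha_N})=\beta_N(h_{\alpha_1}+\cdots+h_{\alpha_{N-1}})=0$. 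The only cosmetic difference is that the paper obtains the vanishing of the pure commutators directly from the braid identity \eqref{eq:ij-ji} (writing both vectors as images of the commuting pair $F_{N-1},F_N$, resp.\ $E_{N-1},E_N$, under a common Lusztig automorphism) rather than from Levendorskii--Soibelman adjacency, which is an equally valid justification.
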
  
\begin{proof}
  Set $A=T_{N-2}^{-1}\dots T_2^{-1}(F_1)$, $C=T_{N-2} \dots T_2(E_1)$ and $K=K_1K_2 \dots K_{N-2}$. By \eqref{eq:ij-ji} for $j\in \{N-1,N\}$ we have
  \begin{align*}
    T_{j}^{-1}(A)&=T_1 T_2\dots T_{N-2}(F_j), & T_j(C)&= T_1^{-1}T_2^{-1}\dots T_{N-2}^{-1}(E_j)
  \end{align*}
  and hence
  \begin{align*}
     [T_N^{-1}(A),T_{N-1}^{-1}(A)]&=0, & [T_N^{-1}(C),T_{N-1}^{-1}(C)]&=0. 
  \end{align*}
  Moreover, $[T_{N-1}^{-1}(A),K K_N]=0=[T_{N}(C),K K_{N-1}]$ for weight reasons.
  With these preparations we calculate
  \begin{align*}
    \Omega&=[B_{\beta_{N-1}}, B_{\beta_N}]\\
      &=[T_{N-1}^{-1}(A) - c_1 T_N(C)K^{-1}K_{N-1}^{-1}, T_{N}^{-1}(A) - c_1 T_{N-1}(C)K^{-1}K_{N}^{-1}]\\
    &=-c_1[T_{N-1}^{-1}(A),T_{N-1}(C)]K^{-1}K_{N}^{-1} - c_1 [T_N(C), T_N(A)]K^{-1}K_{N-1}^{-1}\\
    &= - c_1 \Omega_{N-1}K_N^{-1}K^{-1} + c_1 \Omega_N K_{N-1}^{-1}K^{-1}
  \end{align*}
  which proves the desired formula.
\end{proof}
By Equations \eqref{eq:Omj-def}, \eqref{eq:OmN-def} and Lemma \ref{lem:Om1} we have $\Omega\in \Uq_0\cap \cB_\bc$. By Proposition \ref{prop:Bc-basis} or \cite[Proposition 6.2]{a-Kolb14} we have $\Uq_0\cap \cB_\bc=\cM_{X,0}$. Hence, $\Omega$ is an element of weight $0$ in $\cM_X$, that is,
\begin{align}\label{eq:OmMX}
  \Omega\in \cM_{X,0}.
\end{align}  
The triangular decomposition \eqref{eq:triang} of $\Uq$ gives a direct sum decomposition
\begin{align*}
  U_0= U^0 \oplus \bigoplus_{\nu>0} U^-_{-\nu} U^0 U^+_\nu.
\end{align*}
Let $\pi: \Uq \rightarrow U^0$ be the projection with respect to this decomposition, see \cite[6.2]{b-Jantzen96}. For our purposes, it suffices to know $\pi(\Omega)$. As a first step we determine $\pi(\Omega_j)$ for the elements $\Omega_j$ given by \eqref{eq:Omj-def}, \eqref{eq:OmN-def}.
For any $j\in I$ there exists a well-defined algebra automorphism $\Psi_j:U^0\rightarrow U^0$ such that
\begin{align*}
  \Psi_j(K_i) = q^{\delta_{i,j}} K_i.
\end{align*}
\begin{lem}\label{lem:Om2}
  For all $j\in \{1,\dots, N-2\}$ we have the iterative formula
  \begin{align*}
    \pi(\Omega_{j+1}) =\pi(\Omega_j)\frac{K_{j+1} -(2-q^{-2})K_{j+1}^{-1}}{q-q^{-1}} - q \Psi_j(\pi(\Omega_j))\frac{K_{j+1}-K_{j+1}^{-1}}{q-q^{-1}}.
  \end{align*}
\end{lem}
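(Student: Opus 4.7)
The plan is a direct normal-ordering calculation. First I would establish, by induction on $j$, the recursive commutator presentations
\begin{align*}
  A_{j+1} = [F_{j+1}, A_j]_q \qquad \text{and} \qquad C_{j+1} = [E_{j+1}, C_j]_{q^{-1}},
\end{align*}
where $A_m := T_m^{-1}\cdots T_2^{-1}(F_1)$ and $C_m := T_m\cdots T_2(E_1)$, so that $\Omega_m = [A_m, C_m]$. The base case $j = 1$ is \eqref{eq:ij-ji}, and the inductive step uses that $T_{j+1}^{\pm 1}$ fixes $F_i, E_i$ for $i \leq j-1$, while $T_{j+1}^{-1}(F_j) = [F_{j+1}, F_j]_q$ and $T_{j+1}(E_j) = [E_{j+1}, E_j]_{q^{-1}}$; these alterations propagate through the nested commutators to produce the claimed outermost $q$-bracket.

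I would also record the elementary commutations $[E_{j+1}, A_j] = 0 = [F_{j+1}, C_j]$, which hold since $A_j, C_j$ involve only $F_i, E_i$ with $i \leq j$; the weight identities $K_{j+1} A_j = q A_j K_{j+1}$ and $K_{j+1} C_j = q^{-1} C_j K_{j+1}$, which come from $(\alpha_{j+1}, \mu_j) = -1$ for $\mu_j = \alpha_1 + \dots + \alpha_j$; and by definition $[A_j, C_j] = \Omega_j$. Since $A_{j+1} C_{j+1}$ lies in the summand $U^-_{-\mu_{j+1}} U^+_{\mu_{j+1}}$ of the triangular decomposition and $\mu_{j+1} \neq 0$, we have $\pi(A_{j+1} C_{j+1}) = 0$, so $\pi(\Omega_{j+1}) = -\pi(C_{j+1} A_{j+1})$.

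Writing $E = E_{j+1}$, $F = F_{j+1}$, $K = K_{j+1}$, $h = (K - K^{-1})/(q - q^{-1})$, I would expand
\begin{align*}
  C_{j+1} A_{j+1} = (EC_j - q^{-1} C_j E)(FA_j - q A_j F)
\end{align*}
by applying the derivation identity $[xy, z] = x[y, z] + [x, z]y$ to each of the four resulting terms. Using the commutations above, the product reduces to a linear combination of two kinds of monomials: those where $E$ and $F$ combine to produce $h = [E, F]$ multiplied by products of $C_j$ and $A_j$, and those where $A_j$ and $C_j$ combine to produce $\Omega_j = [A_j, C_j]$ multiplied by products of $E$ and $F$.

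Finally I would project via $\pi$. Normal-ordered terms of the form $F_\cJ K_\beta E_\cI$ with $\cJ$ or $\cI$ nontrivial vanish, leaving only $U^0$ contributions. The automorphism $\Psi_j$ enters precisely when $F$ or $E$ is commuted past $\pi(\Omega_j)$, which is a polynomial in $K_1^{\pm 1}, \ldots, K_j^{\pm 1}$: the relations $K_j F = q F K_j$ and $E K_j = q K_j E$ give $\omega F = F \Psi_j(\omega)$ and $E \omega = \Psi_j(\omega) E$ for any such $\omega$. Collecting the $U^0$ contributions and using the identity $(1 - q^{-2})/(q - q^{-1}) = q^{-1}$ to combine the $K^{-1}$ coefficients produces the claimed formula. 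The main difficulty is the bookkeeping: one must disentangle the two simultaneous cancellations (the new $\slfrak_2$-commutator $EF - FE = h$ and the inductive identity $A_j C_j - C_j A_j = \Omega_j$) while tracking the $K_j$-rescalings that produce $\Psi_j$. As a sanity check, for $j = 1$ one has $\pi(\Omega_1) = -h_1$, and a direct calculation gives $\pi(\Omega_2) = -2 h_1 h_2 + q \Psi_1(h_1) h_2 + q^{-1} \Psi_2(h_2) h_1$, which matches the claimed formula via the identity $q^{-1} \Psi_2(h_2) - h_2 = q^{-1} K_2^{-1}$.
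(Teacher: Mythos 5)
Your proposal is correct and follows essentially the same route as the paper's proof: the recursive presentations $A_{j+1}=[F_{j+1},A_j]_q$, $C_{j+1}=[E_{j+1},C_j]_{q^{-1}}$, the vanishing commutators $[E_{j+1},A_j]=[F_{j+1},C_j]=0$, the expansion of the product of the two $q$-commutators, and the appearance of $\Psi_j$ from moving $E_{j+1}$, $F_{j+1}$ past the $U^0$-part of $\Omega_j$ are exactly the ingredients used there. Your $j=1$ consistency check is also correct.
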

\begin{proof}
  For $j\in \{1,\dots,N-2\}$ define $A_j=T_j^{-1}\dots T_2^{-1}(F_1)$ and $C_j=T_j\dots T_2(E_1)$, and in particular $A_1=F_1$ and $C_1=E_1$. We have $\Omega_j=[A_j,C_j]$ and $\Omega_{j+1}=[[F_{j+1},A_j]_q,[E_{j+1},C_j]_{q^{-1}}]$. Using the relations $[F_{j+1},C_j]=0=[E_{j+1},A_j]$ we obtain
  \begin{align*}
    \pi&(\Omega_{j+1})=-\pi\big([E_{j+1},C_j]_{q^{-1}}\,[F_{j+1},A_j]_q\big)\\
    &= - \pi\big(E_{j+1} F_{j+1} C_j A_j {-} q^{-1} C_jE_{j+1} F_{j+1} A_j {-} q E_{j+1} C_j A_j F_{j+1} {+} C_j A_j E_{j+1} F_{j+1}\big)\\
    &=-\frac{K_{j+1}-K_{j+1}^{-1}}{q-q^{-1}} \pi([C_j, A_j]) + q^{-1}\pi([C_j, A_j]) \frac{q K_{j+1}-q^{-1}K_{j+1}^{-1}}{q-q^{-1}}\\
    &\qquad  + q \pi\big(E_{j+1} \pi([C_j,A_j])F_{j+1}\big) - \pi([C_j,A_j]) \frac{K_{j+1}-K_{j+1}^{-1}}{q-q^{-1}}\\
    &= 2 \pi(\Omega_j)\frac{K_{j+1}-K_{j+1}^{-1}}{q-q^{-1}} - \pi(\Omega_j) \frac{K_{j+1}-q^{-2}K_{j+1}^{-1}}{q-q^{-1}} - q \Psi_j(\pi(\Omega_j)) \frac{K_{j+1}-K_{j+1}^{-1}}{q-q^{-1}}
  \end{align*}
  which gives the desired formula.
\end{proof}
The above lemma allows us to compare $\pi(\Omega_j)$ and $\Psi_j(\pi(\Omega_j))$.
\begin{lem}\label{lem:Om3}
  For all $j\in \{1,\dots, N-1\}$ we have
  \begin{align}\label{eq:OmPsiOm}
    \pi(\Omega_j)-q \Psi_j(\pi(\Omega_j))=(-1)^{j-1}q^j K_1 \dots K_j.
  \end{align}  
\end{lem}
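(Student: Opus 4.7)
The plan is to prove the identity by induction on $j$, using the recursion from Lemma \ref{lem:Om2} as the engine. Setting $D_j := \pi(\Omega_j) - q\Psi_j(\pi(\Omega_j))$, the target identity reads $D_j = (-1)^{j-1}q^j K_1\cdots K_j$. The base case $j=1$ is immediate: since $\Omega_1 = [F_1,E_1] = (K_1^{-1}-K_1)/(q-q^{-1})$ already lies in $U^0$, a short direct computation gives $D_1 = qK_1$.

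The inductive step rests on the observation that $\pi(\Omega_j)$ lies in the subalgebra of $U^0$ generated by $K_1^{\pm 1},\dots,K_j^{\pm 1}$. This follows from the weight analysis: both $T_j^{-1}\cdots T_2^{-1}(F_1)$ and $T_j\cdots T_2(E_1)$ are weight vectors with weights involving only $\alpha_1,\dots,\alpha_j$ (namely $\mp(\alpha_1+\cdots+\alpha_j)$, obtained by tracking the action of $s_j\cdots s_2$ on $\mp\alpha_1$), so via the triangular decomposition the projection $\pi$ of their commutator to $U^0$ only involves $K_1,\dots,K_j$. Consequently $\Psi_{j+1}$ fixes both $\pi(\Omega_j)$ and $\Psi_j(\pi(\Omega_j))$ and commutes with $\Psi_j$ on them.

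Applying $\Psi_{j+1}$ to the recursion of Lemma \ref{lem:Om2} and using this invariance, the difference $D_{j+1}$ becomes
\[
  D_{j+1} = \pi(\Omega_j)\bigl(X_1 - q\Psi_{j+1}(X_1)\bigr) - q\Psi_j(\pi(\Omega_j))\bigl(X_2 - q\Psi_{j+1}(X_2)\bigr),
\]
where $X_1 = (K_{j+1}-(2-q^{-2})K_{j+1}^{-1})/(q-q^{-1})$ and $X_2 = (K_{j+1}-K_{j+1}^{-1})/(q-q^{-1})$. A short check shows that both $X_1 - q\Psi_{j+1}(X_1)$ and $X_2 - q\Psi_{j+1}(X_2)$ collapse to $(1-q^2)K_{j+1}/(q-q^{-1}) = -qK_{j+1}$, so the entire expression factors cleanly as $D_{j+1} = -qK_{j+1}\cdot D_j$. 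The inductive hypothesis then yields $D_{j+1} = -qK_{j+1}\cdot(-1)^{j-1}q^j K_1\cdots K_j = (-1)^j q^{j+1}K_1\cdots K_{j+1}$, as desired.

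The only non-routine step is the verification that $\pi(\Omega_j)$ involves no $K_i$ with $i>j$, which is what decouples the iteration into a genuine one-step recursion; after that, the calculation is a compact manipulation of Laurent polynomials in $K_{j+1}^{\pm 1}$.
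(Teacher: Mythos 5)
Your proposal is correct and follows essentially the same route as the paper: induction on $j$ with the base case $\pi(\Omega_1)-q\Psi_1(\pi(\Omega_1))=qK_1$, then applying the recursion of Lemma \ref{lem:Om2} together with the invariance of $\pi(\Omega_j)$ under $\Psi_{j+1}$, so that both bracketed Laurent polynomials in $K_{j+1}^{\pm 1}$ collapse to $-qK_{j+1}$ and the difference satisfies $D_{j+1}=-qK_{j+1}D_j$. The only difference is cosmetic: you spell out why $\pi(\Omega_j)$ involves only $K_1^{\pm1},\dots,K_j^{\pm1}$ (a weight/support argument, which one could also get by induction from Lemma \ref{lem:Om2} itself), whereas the paper simply asserts this invariance.
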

\begin{proof}
  We perform induction on $j$. For $j=1$ we have $\Omega_1=\frac{K_1^{-1}-K_1}{q-q^{-1}}$ and Equation \eqref{eq:OmPsiOm} holds. For $j=1, \dots, N-2$ we now use Lemma \ref{lem:Om2} and the fact that $\pi(\Omega_j)$ is invariant under $\Psi_{j+1}$ to calculate
  \begin{align*}
    \pi(\Omega_{j+1})-&q \Psi_{j+1}(\pi(\Omega_{j+1}))\\
    =&\pi(\Omega_j)\left[\frac{K_{j+1}-(2-q^{-1})K_{j+1}^{-1}}{q-q^{-1}} - q\frac{qK_{j+1}-(2-q^{-1})q^{-1}K_{j+1}^{-1}}{q-q^{-1}} \right]\\
    & - q\Psi_j(\pi(\Omega_j))\left[\frac{K_{j+1}-K_{j+1}^{-1}}{q-q^{-1}} - q\frac{qK_{j+1}-q^{-1}K_{j+1}^{-1}}{q-q^{-1}} \right]\\
    =&\big(\pi(\Omega_j) - q \Psi_j(\pi(\Omega_j))\big) (-q)K_{j+1}.
  \end{align*}
  This formula provides the induction step.
\end{proof}  
Using the three lemmas above, we obtain the desired expression for $\pi(\Omega)$.
\begin{prop}\label{prop:piOmega}
  The commutator $\Omega$ given by \eqref{eq:Om-def} satisfies the relation
  \begin{align*}
    \pi(\Omega)=c_1(-1)^{N-1}q^{N-2}\frac{K_NK_{N-1}^{-1}- K_{N-1}K_N^{-1}}{q-q^{-1}}.
  \end{align*}  
\end{prop}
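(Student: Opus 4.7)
The plan is to apply the projection $\pi$ to the identity in Lemma \ref{lem:Om1}, reduce the problem to computing $\pi(\Omega_{N-1})$ and $\pi(\Omega_N)$, and then exploit the symmetry of the type $DII$ Satake diagram to force almost all terms to cancel. Since $\pi$ is $U^0$-bilinear (it respects the decomposition $U_0 = U^0 \oplus \bigoplus_\nu U^-_{-\nu} U^0 U^+_\nu$, so $\pi(K x K') = K \pi(x) K'$ for $K, K' \in U^0$), Lemma \ref{lem:Om1} yields
\begin{align*}
\pi(\Omega) = c_1 \bigl( \pi(\Omega_N) K_{N-1}^{-1} - \pi(\Omega_{N-1}) K_N^{-1} \bigr) K_1^{-1} \cdots K_{N-2}^{-1}.
\end{align*}

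The key step is an induction on $j$ showing that for $j=1,\dots,N-1$,
\begin{align*}
\pi(\Omega_j) = \frac{(-1)^j q^{j-1}}{q - q^{-1}} K_1 K_2 \cdots K_j + R_j,
\end{align*}
where $R_j$ is a $\qfield$-linear combination of monomials in $U^0$ in which $K_j$ appears only with exponent $-1$. The base case $j=1$ is $\pi(\Omega_1) = (K_1^{-1}-K_1)/(q-q^{-1})$. For the inductive step, I would use Lemma \ref{lem:Om3} to eliminate $q \Psi_j(\pi(\Omega_j))$ from the recursion of Lemma \ref{lem:Om2}, arriving at
\begin{align*}
\pi(\Omega_{j+1}) = -q^{-1}\pi(\Omega_j)K_{j+1}^{-1} + \frac{(-1)^{j-1}q^j}{q - q^{-1}} K_1 \cdots K_j (K_{j+1} - K_{j+1}^{-1}).
\end{align*}
Every term of the first summand has $K_{j+1}^{-1}$; in the second, the unique contribution containing $K_{j+1}^{+1}$ is $\frac{(-1)^{j-1}q^j}{q-q^{-1}} K_1 \cdots K_{j+1}$, which provides the new leading coefficient $(-1)^{j+1}q^j/(q-q^{-1})$, and all other terms are absorbed into $R_{j+1}$.

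Finally, I would appeal to the Dynkin diagram automorphism of $D_N$ swapping nodes $N-1$ and $N$, which lifts to a Hopf algebra automorphism $\phi$ of $U_q$ fixing $E_1$, $F_1$, $K_1$, preserving the triangular decomposition (hence commuting with $\pi$), and intertwining Lusztig automorphisms via $\phi \circ T_i = T_{\phi(i)} \circ \phi$. Consequently $\phi(\Omega_{N-1}) = \Omega_N$ and $\pi(\Omega_N) = \phi(\pi(\Omega_{N-1}))$. Since $\Omega_{N-1}$ is constructed using only $T_2,\dots,T_{N-1}$, the element $R_{N-1}$ involves no $K_N$, and by the inductive claim it is divisible by $K_{N-1}^{-1}$; write $R_{N-1} = R' K_{N-1}^{-1}$ with $R'$ free of $K_{N-1}^{\pm 1}$ and $K_N^{\pm 1}$, so that $R_N = R' K_N^{-1}$. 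Substituting both expressions into the formula for $\pi(\Omega)$, the $R'$-contributions cancel because $K_{N-1}^{-1}$ and $K_N^{-1}$ commute, and the leading terms combine into the required form after multiplication by $c_1 K_1^{-1} \cdots K_{N-2}^{-1}$. The main obstacle is the structural part of the induction—not just tracking coefficients but verifying that every monomial in $R_j$ carries $K_j^{-1}$—since it is precisely this property that makes the symmetry produce the cancellation and leave only the two monomials $K_N K_{N-1}^{-1}$ and $K_{N-1} K_N^{-1}$ in the end.
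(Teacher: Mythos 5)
Your proposal is correct, and it uses the same three ingredients as the paper (Lemmas \ref{lem:Om1}, \ref{lem:Om2}, \ref{lem:Om3}), but it organises the endgame differently. The paper never isolates a closed normal form for $\pi(\Omega_{N-1})$: it applies the recursion of Lemma \ref{lem:Om2} (and its mirror image with $K_N$ in place of $K_{N-1}$, which is the implicit use of the $N-1\leftrightarrow N$ symmetry) to express \emph{both} $\pi(\Omega_{N-1})$ and $\pi(\Omega_N)$ through $\pi(\Omega_{N-2})$, so that after substitution into Lemma \ref{lem:Om1} the unwanted terms cancel pairwise and one is left with the single factor $\pi(\Omega_{N-2})-q\Psi_{N-2}(\pi(\Omega_{N-2}))$, evaluated by one application of Lemma \ref{lem:Om3} at $j=N-2$. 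You instead feed Lemma \ref{lem:Om3} back into Lemma \ref{lem:Om2} at every step to get the reduced recursion $\pi(\Omega_{j+1})=-q^{-1}\pi(\Omega_j)K_{j+1}^{-1}+\tfrac{(-1)^{j-1}q^j}{q-q^{-1}}K_1\cdots K_j(K_{j+1}-K_{j+1}^{-1})$ (which is correct: $(q^{-2}-1)/(q-q^{-1})=-q^{-1}$), prove the structural claim that $\pi(\Omega_j)$ is its leading term plus a remainder all of whose monomials carry $K_j^{-1}$, and then transport this to $\pi(\Omega_N)$ via the diagram automorphism $\phi$ swapping nodes $N-1$ and $N$; your justification of $\phi(\Omega_{N-1})=\Omega_N$ (via $\phi\circ T_i=T_{\tau(i)}\circ\phi$, $\phi$ fixing $E_1,F_1$, and $\phi$ commuting with $\pi$ because it permutes weight spaces and preserves the triangular decomposition) is sound, and the cancellation of the $R'$-terms then gives exactly the stated formula with the correct sign and power of $q$. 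What each route buys: the paper's is shorter, invokes Lemma \ref{lem:Om3} only once, and needs no statement about the shape of the remainder; yours yields a sharper intermediate result (an explicit leading term of $\pi(\Omega_j)$ modulo monomials containing $K_j^{-1}$) and makes the symmetry argument explicit, at the cost of verifying the auxiliary compatibilities of $\phi$ and the bookkeeping of the remainders.
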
  
\begin{proof}
  Using Lemmas \ref{lem:Om1} and \ref{lem:Om2} we get
  \begin{align*}
    \pi(\Omega)=& c_1\big(\pi(\Omega_N)K_{N-1}^{-1} - \pi(\Omega_{N-1})K_N^{-1} \big) K_1^{-1} \dots K_{N-2}^{-1}\\
    =&c_1 \Big[\pi(\Omega_{N-2})\frac{K_N{-}(2{-}q^{-2})K_N^{-1}}{q-q^{-1}}\\&\qquad\qquad - q \Psi_{N-2}(\pi(\Omega_{N-2}))\frac{K_N{-}K_N^{-1}}{q-q^{-1}}\Big]K_1^{-1}\dots K_{N-2}^{-1} K_{N-1}^{-1}\\
    &-c_1\Big[\pi(\Omega_{N-2})\frac{K_{N-1}{-}(2{-}q^{-2})K_{N-1}^{-1}}{q-q^{-1}}\\&\qquad\qquad - q \Psi_{N-2}(\pi(\Omega_{N-2}))\frac{K_{N-1}{-}K_{N-1}^{-1}}{q-q^{-1}}\Big]K_1^{-1}\dots K_{N-2}^{-1} K_N^{-1}\\
    =&c_1 \big(\pi(\Omega_{N-2}) - q \Psi_{N-2}(\pi(\Omega_{N-2}))\big)\frac{K_N K_{N-1}^{-1} - K_{N-1}K_N^{-1}}{q-q^{-1}} K_1^{-1}\dots K_{N-2}^{-1}.
  \end{align*}
Now the statement of the proposition follows from Lemma \ref{lem:Om3}.
\end{proof}  
\subsection{The missing $\slfrak_2$-triple}\label{sec:missing}
Let $V$ be a representation of $\cB_\bc$. Proposition \ref{prop:piOmega} suggests that the triple $B_{\beta_{N-1}}, B_{\beta_N}, K_N K_{N-1}^{-1}$ provides a representation of $U_q(\slfrak_2)$ on a suitable subspace of $V$. We define
\begin{align}\label{eq:HV-def}
  H(V)=\{v\in V\,|\,B_{\beta_{N-2}}v=E_iv=0 \quad \mbox{for all $i\in X$}\}.
\end{align}
Recall from Lemma \ref{lem:EBBE} that $B_{\beta_{N-2}}=[E_{N-1},B_{\beta_{N-1}}]K_{N-1}$. Hence, Lemma \ref{lem:hwv} implies that $H(V)\neq \{0\}$  if $0<\dim(V)<\infty$.
\begin{lem}\label{lem:HV}
  Let $V$ be a $\cB_\bc$-module. The subspace $H(V)$ is invariant under the action of the root vectors $B_{\beta_{N-1}}$ and $B_{\beta_N}$.
\end{lem}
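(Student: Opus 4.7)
The plan is to take an arbitrary $v\in H(V)$ and verify that $B_{\beta_{N-1}}v$ and $B_{\beta_N}v$ each satisfy the two defining conditions of $H(V)$: annihilation by $E_i$ for every $i\in X$, and annihilation by $B_{\beta_{N-2}}$. Since $H(V)$ is cut out by $U^0_X$-homogeneous conditions, it decomposes as a direct sum of weight spaces, so I may assume that $v$ is a weight vector throughout.

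For the $E_i$-condition, I would commute $E_i$ past $B_{\beta_m}$ (for $m\in\{N-1,N\}$) using Lemma \ref{lem:EBBE} together with its analog for $m=N$. For $i\in X\setminus\{m\}$ the commutator vanishes, so $E_i B_{\beta_m} v = B_{\beta_m} E_i v = 0$. For $i=m$ the commutator equals $B_{\beta_{N-2}}K_m^{-1}$, and since $v$ is a weight vector $K_m^{-1}v$ is a scalar multiple of $v$; the result is therefore a scalar multiple of $B_{\beta_{N-2}}v=0$.

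For the $B_{\beta_{N-2}}$-condition, I would invoke Corollary \ref{lem:inBcE}, which gives $[B_{\beta_{N-2}}, B_{\beta_m}]_q \in \cM_X \cM_{X,+}^+$ for $m\in\{N-1,N\}$. Writing $B_{\beta_{N-2}}B_{\beta_m} = q\,B_{\beta_m}B_{\beta_{N-2}} + x$ with $x\in \cM_X\cM_{X,+}^+$, the first summand kills $v$ outright because $B_{\beta_{N-2}}v=0$. The second summand also vanishes on $v$: every element of $\cM_{X,+}^+$ is a $\qfield$-linear combination of PBW monomials $E_{i_1}E_{i_2}\cdots E_{i_k}$ with $k\ge 1$ and $i_j\in X$, and the innermost factor $E_{i_k}$ already annihilates $v$. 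Hence $xv=0$ and both conditions are verified.

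The only mild obstacle is that Lemma \ref{lem:EBBE} is stated only for $m\le N-1$, so the commutation $[E_i, B_{\beta_N}]$ needed above is not literally available. I would resolve this either by rerunning the proof of Lemma \ref{lem:EBBE} verbatim with the outermost $T_{N-1}^{-1}$ in the expression for $B_{\beta_{N-1}}$ replaced by $T_N^{-1}$, or by appealing to the $\tau$-symmetry swapping the nodes $N-1$ and $N$, which is the same mechanism already used in the proof of Corollary \ref{lem:inBcE}.
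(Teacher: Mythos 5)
Your argument is correct in substance and rests on the same two ingredients as the paper, but it is organised differently. The paper first rewrites $B_{\beta_\ell}v=-q\,B_{\beta_{N-2}}F_\ell v$ using the identity $B_{\beta_\ell}=[F_\ell,B_{\beta_{N-2}}]_q$ for $\ell\in\{N-1,N\}$, and then checks the two defining conditions of $H(V)$ on the vector $B_{\beta_{N-2}}F_\ell v$: the $E_i$-condition is handled through commutators of $E_i$ with $B_{\beta_{N-2}}$ and with $F_\ell$ (so only the range $m\le N-1$ of Lemma \ref{lem:EBBE} is needed there), while the $B_{\beta_{N-2}}$-condition reduces, exactly as in your proposal, to Corollary \ref{lem:inBcE} together with the observation that $\cM_X\cM_{X,+}^+$ annihilates vectors killed by all $E_i$, $i\in X$. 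Your route verifies the conditions on $B_{\beta_m}v$ directly, which is a little shorter, at the cost of needing the $m=N$ analogue of Lemma \ref{lem:EBBE}; the fix you propose is precisely what the paper itself does later, where \eqref{eq:EBBEN} is obtained ``by symmetry between $N-1$ and $N$'', so this point is unproblematic.

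One small correction: the reduction to weight vectors is not justified, because $V$ is an arbitrary $\cB_\bc$-module and $U^0_X$ need not act diagonalisably on it (it does on finite-dimensional modules and on Verma modules, but the lemma is stated in full generality, and $H(V)$ being cut out by homogeneous conditions only gives $U^0_X$-invariance, not a weight decomposition). Fortunately you do not need this assumption: the only place it enters is the case $i=m$, where $E_mB_{\beta_m}v=B_{\beta_{N-2}}K_m^{-1}v$. Instead, note that $B_{\beta_{N-2}}$ has a definite weight under conjugation by $U^0_X$, so $B_{\beta_{N-2}}K_m^{-1}=q^{c}K_m^{-1}B_{\beta_{N-2}}$ for some integer $c$, whence $B_{\beta_{N-2}}K_m^{-1}v=q^{c}K_m^{-1}B_{\beta_{N-2}}v=0$. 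With this one-line replacement your proof is complete and valid for arbitrary $\cB_\bc$-modules.
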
  
\begin{proof}
  Fix $v\in H(V)$. Recall the root vectors $B_{\beta_j}$ defined by \eqref{eq:Bbetaj}. For $\ell\in \{N-1,N\}$ it follows from \eqref{eq:Bbetaj2} that
  \begin{align}\label{eq:Bbetaell}
     B_{\beta_\ell}=[F_\ell,B_{\beta_{N-2}}]_q.
  \end{align}  
  Hence we have $B_{\beta_\ell}v=-q B_{\beta_{N-2}}F_\ell v$ by definition of $H(V)$. To prove the Lemma, it remains to verify the relations
  \begin{align}
    E_i B_{\beta_{N-2}} F_\ell v &= 0 \qquad \mbox{for all $i\in X$,}\label{eq:goal1}\\
    B^2_{\beta_{N-2}} F_\ell v &=0. \label{eq:goal2}
  \end{align}
  Again by \eqref{eq:Bbetaj2} we have
  \begin{align*}
    [E_i,B_{\beta_{N-2}}]=\begin{cases}
              qK_{N-2}^{-1}B_{\beta_{N-3}}& \mbox{if $i=N-2$,}\\
              0 & \mbox{if $i\in X\setminus\{N-2\}$.}  
                      \end{cases}
  \end{align*}
  The above relation implies Equation \eqref{eq:goal1}. Indeed, for $i=N-2$ we obtain
  \begin{align*}
     E_{N-2}B_{\beta_{N-2}}F_\ell v = B_{\beta_{N-2}}E_{N-2}F_\ell v + qK_{N-2}^{-1}B_{\beta_{N-3}}F_\ell v = 0
  \end{align*}
  as $[E_{N-2},F_\ell]=[B_{\beta_{N-3}},F_\ell]=0$. And for $i=\ell$ we have
  \begin{align*}
    E_\ell B_{\beta_{N-2}}F_\ell v = B_{\beta_{N-2}}\frac{K_\ell-K_\ell^{-1}}{q-q^{-1}}v= \frac{qK_\ell-q^{-1}K_\ell^{-1}}{q-q^{-1}}B_{\beta_{N-2}}v=0.
  \end{align*}
  To verify Equation \eqref{eq:goal2}, observe that Equation \eqref{eq:Bbetaell} and the definition of $H(V)$ imply that
  \begin{align*}
    B^2_{\beta_{N-2}} F_\ell v = -q^{-1} B_{\beta_{N-2}} B_{\beta_\ell}v=-q^{-1}[B_{\beta_{N-2}},B_{\beta_\ell}]_q v.
  \end{align*}
  This expression vanishes by Corollary \ref{lem:inBcE} and by the definition of $H(V)$.
\end{proof}
Lemma \ref{lem:HV} and Proposition \ref{prop:piOmega} provide us with the desired representation of $U_q(\slfrak_2)$ on the subspace $H(V)$. Observe that the subspace $H(V)$ is also invariant under $U^0_X$. 
\begin{prop}\label{prop:Uqsl2}
  Let $V$ be a $\cB_\bc$-module and $a_{N-1}, a_N\in \qfield$ satisfying the relation $a_{N-1}a_N =(-1)^{N-1} c_1^{-1}q^{2-N}$. There exists an algebra homomorphism $U_q(\slfrak_2)\rightarrow \mathrm{End}(H(V))$ such that
  \begin{align*}
     E\mapsto a_{N-1}B_{\beta_{N-1}}, \quad F\mapsto a_NB_{\beta_N}, \quad K \mapsto K_N K_{N-1}^{-1}.
  \end{align*}  
\end{prop}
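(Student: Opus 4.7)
The plan is to verify the three defining relations of $U_q(\slfrak_2)$ for the assignment $E \mapsto a_{N-1} B_{\beta_{N-1}}$, $F \mapsto a_N B_{\beta_N}$, $K \mapsto K_N K_{N-1}^{-1}$ acting on $H(V)$. By Lemma \ref{lem:HV}, together with the standard $q$-commutation of $U^0_X$ with the generators $E_i$ and $B_{\beta_{N-2}}$ that define $H(V)$, the subspace $H(V)$ is stable under all three operators, so the proposed images are genuine endomorphisms of $H(V)$. The two Cartan relations $K E K^{-1} = q^2 E$ and $K F K^{-1} = q^{-2} F$ will hold on all of $V$ as an immediate consequence of Lemma \ref{lem:root-vectors} applied with $h = h_{\gamma_{N-1}}$: by \eqref{eq:etaxi2} one has $K_{\eta(h_{\gamma_{N-1}})} = K_N K_{N-1}^{-1}$, and $\gamma_{N-1}(h_{\gamma_{N-1}}) = 2$.

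The heart of the proof is the Serre-type relation $[E,F] = (K - K^{-1})/(q - q^{-1})$. For $v \in H(V)$ I would argue
\begin{align*}
  [a_{N-1} B_{\beta_{N-1}},\, a_N B_{\beta_N}]\, v \;=\; a_{N-1} a_N \,\Omega\, v \;=\; a_{N-1} a_N \,\pi(\Omega)\, v,
\end{align*}
with $\Omega$ as in \eqref{eq:Om-def}. The first equality is the definition of $\Omega$. The second uses the crucial fact, recorded in \eqref{eq:OmMX}, that $\Omega \in \cM_{X,0}$: the triangular decomposition of $\cM_X$ lets me write $\Omega - \pi(\Omega)$ as a finite sum of elements of the form $y K z$ with $y \in (\cM_X^-)_{-\nu}$, $K \in U^0_X$, and $z \in (\cM_X^+)_\nu$ for some nonzero $\nu \in Q_X^+$. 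Each such $z$ is a $\qfield$-linear combination of monomials in $\{E_i : i \in X\}$ of length at least one, whose rightmost factor annihilates $v$ by the very definition of $H(V)$; hence $(\Omega - \pi(\Omega))\, v = 0$.

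With this reduction in place, Proposition \ref{prop:piOmega} computes $\pi(\Omega)$ explicitly as $c_1 (-1)^{N-1} q^{N-2}(K_N K_{N-1}^{-1} - K_{N-1} K_N^{-1})/(q-q^{-1})$. Matching this against the required target $(K_N K_{N-1}^{-1} - K_{N-1} K_N^{-1})/(q-q^{-1})$ produces exactly the scalar identity $a_{N-1} a_N c_1 (-1)^{N-1} q^{N-2} = 1$, which rearranges to the hypothesis $a_{N-1} a_N = (-1)^{N-1} c_1^{-1} q^{2-N}$. I see no serious obstacle in the argument: the substantive work has already been absorbed into Proposition \ref{prop:piOmega} and Lemma \ref{lem:HV}. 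The only conceptual point worth emphasising is that $H(V)$ has been tailored precisely so that the off-diagonal tail of $\Omega \in \cM_{X,0}$ is annihilated, which forces the commutator $[B_{\beta_{N-1}}, B_{\beta_N}]$ to collapse onto its Cartan part and makes the $\slfrak_2$-triple visible.
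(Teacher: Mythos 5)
Your argument is correct and follows essentially the same route as the paper: invariance of $H(V)$ via Lemma \ref{lem:HV}, the collapse $\Omega v=\pi(\Omega)v$ on $H(V)$ coming from $\Omega\in\cM_{X,0}$ as in \eqref{eq:OmMX}, and the explicit formula for $\pi(\Omega)$ in Proposition \ref{prop:piOmega} to fix the scalar condition on $a_{N-1}a_N$. You merely spell out details the paper leaves implicit (the Cartan relations via Lemma \ref{lem:root-vectors} and the annihilation of the off-diagonal part of $\Omega$ by the defining conditions of $H(V)$), so no further changes are needed.
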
  
\begin{proof}
  By Lemma \ref{lem:HV} the subspace $H(V)$ is invariant under $B_{\beta_{N-1}}$ and $B_{\beta_N}$. By \eqref{eq:OmMX} the commutator $\Omega$ defined by \eqref{eq:Om-def} satisfies
  \begin{align*}
    \Omega v = \pi(\Omega) v \qquad \mbox{for all $v\in H(V)$.}
  \end{align*}
  Now the statement of the Proposition follows from Proposition \ref{prop:piOmega}.
\end{proof}  
Proposition \ref{prop:Uqsl2} implies that the weight $\lambda\in P_{2N-1}$ in Lemma \ref{lem:hwv} is dominant.
\begin{cor}\label{cor:gammaj-dom}
   Let $V$ be a finite-dimensional representation of $\cB_\bc$ and $v\in V$ a weight vector of weight $\lambda\in P_{2N-1}$ such that $B_{\beta_{N-1}}v=E_iv=0$ for all $i\in X$. Then $\lambda(h_{\gamma_j})\in \N_0$ for all $j=1, \dots, N-1$.
\end{cor}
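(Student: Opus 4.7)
The plan is to split into the two ranges $j \le N-2$ and $j = N-1$. For $j=1,\dots,N-2$, the statement is essentially Lemma \ref{lem:hwv}: the hypotheses on $v$ include $E_{j+1}v = 0$, and since $\{E_{j+1},F_{j+1},K_{j+1}^{\pm 1}\}$ generates a copy of $U_q(\slfrak_2)$ inside $\cB_\bc$ with $K_{\eta(h_{\gamma_j})}=K_{j+1}$, the $\qfield$-linear span of $F_{j+1}^m v$ for $m\ge 0$ is a finite-dimensional $U_q(\slfrak_2)$-module of type $\one$ with highest weight vector $v$ of weight $q^{\lambda(h_{\gamma_j})}$. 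Standard $U_q(\slfrak_2)$-theory then forces $\lambda(h_{\gamma_j})\in \N_0$.

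The remaining case $j = N-1$ is where Proposition \ref{prop:Uqsl2} enters. First I would show that $v$ actually lies in the subspace $H(V)$ defined in \eqref{eq:HV-def}. The condition $E_iv=0$ for $i\in X$ is given, so it suffices to verify $B_{\beta_{N-2}}v = 0$. By Lemma \ref{lem:EBBE} applied to $i=m=N-1$ we have $[E_{N-1},B_{\beta_{N-1}}]=B_{\beta_{N-2}}K_{N-1}^{-1}$, hence
\begin{align*}
  B_{\beta_{N-2}}K_{N-1}^{-1}v = E_{N-1}B_{\beta_{N-1}}v - B_{\beta_{N-1}}E_{N-1}v = 0,
\end{align*}
so $B_{\beta_{N-2}}v=0$ because $K_{N-1}^{-1}$ is invertible on weight spaces. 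Therefore $v \in H(V)$.

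Next I would apply Proposition \ref{prop:Uqsl2} with, say, $a_{N-1}=1$ and $a_N=(-1)^{N-1}c_1^{-1}q^{2-N}$ to obtain an algebra homomorphism $U_q(\slfrak_2)\rightarrow \End(H(V))$ sending $E\mapsto B_{\beta_{N-1}}$, $F\mapsto a_N B_{\beta_N}$, and $K\mapsto K_N K_{N-1}^{-1}$. Using \eqref{eq:etaxi2}, the element $K_N K_{N-1}^{-1}$ coincides with $K_{\eta(h_{\gamma_{N-1}})}$ and hence acts on $v$ by the scalar $q^{\lambda(h_{\gamma_{N-1}})}$. Combined with $B_{\beta_{N-1}}v=0$, this says that $v$ is a highest weight vector for this $U_q(\slfrak_2)$-action of weight $\lambda(h_{\gamma_{N-1}})\in\Z$. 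The $U_q(\slfrak_2)$-submodule $U_q(\slfrak_2)v \subseteq H(V)\subseteq V$ is finite-dimensional and of type $\one$, so the standard classification of finite-dimensional $U_q(\slfrak_2)$-modules forces $\lambda(h_{\gamma_{N-1}})\in \N_0$.

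The only delicate point is the verification that $v \in H(V)$ together with the correct identification of the $K$-weight of $v$ under the $U_q(\slfrak_2)$-action provided by Proposition \ref{prop:Uqsl2}; everything else is an immediate consequence of the preceding lemmas and of the representation theory of $U_q(\slfrak_2)$. Note in particular that one does not need to know anything about the $F$-action on $v$: finite-dimensionality of $V$ alone guarantees that the $U_q(\slfrak_2)$-submodule generated by the highest weight vector $v$ is finite-dimensional, which is all that is required to conclude.
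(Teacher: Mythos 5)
Your proposal is correct and follows exactly the route the paper intends: the cases $j\le N-2$ are handled by the $\slfrak_2$-triples $\{E_{j+1},F_{j+1},K_{j+1}^{\pm 1}\}$ as in Lemma \ref{lem:hwv}, while for $j=N-1$ one checks $v\in H(V)$ via Lemma \ref{lem:EBBE} (the same observation the paper makes just before Lemma \ref{lem:HV}) and then applies the $U_q(\slfrak_2)$-action of Proposition \ref{prop:Uqsl2}. The paper leaves these details implicit, and your write-up supplies them faithfully, including the identification $K_NK_{N-1}^{-1}=K_{\eta(h_{\gamma_{N-1}})}$ giving the correct highest weight.
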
  
\subsection{Verma modules for $\cB_\bc$}\label{sec:Verma}
For any $\lambda\in P_{2N-1}$ consider the left ideal
\begin{align*}
  J_{\lambda}=\sum_{i=2}^N \bigg(\cB_\bc E_i +\cB_\bc(K_i-q^{\lambda(\eta^{-1}(h_{\alpha_i}))})\bigg) + \cB_\bc B_{\beta_{N-1}} \subseteq \cB_\bc
\end{align*}
and define a corresponding Verma module $M(\lambda)$ for $\cB_\bc$ by
\begin{align*}
  M(\lambda) = \cB_\bc/J_\lambda.
\end{align*}
We write $v_\lambda\in M(\lambda)$ to denote the coset $1+J_\lambda$.
\begin{rema}
  By Lemma \eqref{lem:root-vectors} the root vectors in $\cB_\bc$ corresponding to positive, simple roots are given by $E_2,\dots, E_{N-1}, B_{\beta_{N-1}}$. Hence, it would be natural to consider the left ideal
  \begin{align*}
    I_\lambda=\sum_{i=2}^{N-1} \bigg(\cB_\bc E_i +\cB_\bc(K_i{-}q^{\lambda(h_{\gamma_{i-1}})})\bigg) + \cB_\bc B_{\beta_{N-1}} + \cB_\bc(K_{N}K_{N-1}^{-1}{-}q^{\lambda(h_{\gamma_{N-1}})}).
  \end{align*}
 It is not obvious from the relations of $\cB_\bc$ that $E_N\in I_\lambda$ and hence it is convenient to work with $J_\lambda$ instead. At $q=1$ we have $I_\lambda=J_\lambda$.
\end{rema}  
The PBW-Theorem for $\cB_\bc$ allows us to prove a PBW-Theorem for the Verma module $M(\lambda)$. Recall the definition $\eqref{eq:BJ-def}$ of the PBW-monomial $B_\cJ$ for  $\cJ=(j_1,\dots,j_{N(N-1)})\in \N_0^{N(N-1)}$. By Lemmma \ref{lem:EBBE} the root vectors $B_{\beta_j}$ satisfy the relation
\begin{align*}
  B_{\beta_{j-1}}K_{j}^{-1}=[E_{j},B_{\beta_j}] \qquad \mbox{for $j=2,\dots,N-1$.}
\end{align*}  
Hence $B_\cJ v_\lambda=0$ unless $j_1=\cdots=j_{N-1}=0$. By the following theorem, the remaining vectors $B_\cJ v_\lambda$ form a basis of the Verma module $M(\lambda)$.
\begin{thm}\label{thm:PBWMl}
  For any $\lambda \in P_{2N-1}$ the family of vectors
  \begin{align*}
     \{B_\cJ v_\lambda\,|\, \cJ\in \N_0^{N(N-1)}, j_1=\dots=j_{N-1}=0\}
  \end{align*}
  is a basis of the Verma module $M(\lambda)$.
\end{thm}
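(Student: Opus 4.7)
My plan is to prove the theorem in two stages: first the spanning property, then linear independence through a filtered--graded comparison with an analogous Verma module for $\cA$. For spanning, I will iterate Lemma \ref{lem:EBBE}, which gives $B_{\beta_{m-1}}K_m^{-1}=[E_m,B_{\beta_m}]$ for $2\le m\le N-1$. Starting from the defining relations $B_{\beta_{N-1}}v_\lambda=0$ and $E_iv_\lambda=0$ for $i\in X$, a descending induction on $m$ yields $B_{\beta_m}v_\lambda=0$ for all $m=1,\dots,N-1$. By Corollary \ref{cor:Bc-triang} any $b\in\cB_\bc$ decomposes as $b=\sum_\alpha b^-_\alpha k_\alpha b^+_\alpha$ with $b^-_\alpha\in\cB_\bc^-$, $k_\alpha\in U^0_X$, $b^+_\alpha\in\cB_\bc^+$; expanding each $b^+_\alpha$ in the PBW basis of $\cB_\bc^+$ from Proposition \ref{prop:Bc-basis}, every basis element other than $1$ annihilates $v_\lambda$, while $k_\alpha v_\lambda\in\qfield v_\lambda$. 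Hence $bv_\lambda$ lies in the span of $\{B_\cJ v_\lambda:j_1=\cdots=j_{N-1}=0\}$.

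For linear independence, I will filter $M(\lambda)$ by $F_nM(\lambda):=F_n(\cB_\bc)v_\lambda$ using the standard filtration from Section \ref{sec:standardFilt}. Then $\gr M(\lambda)$ becomes an $\cA$-module via $\varphi:\cA\xrightarrow{\sim}\gr\cB_\bc$, and the image of $v_\lambda$ is annihilated by $E_i$, $K_i-q^{\lambda_i}$ for $i\in X$ and by $\varphi^{-1}(B_{\beta_{N-1}})=F_{\beta_{N-1}}$ in the first graded piece. Setting $J^\cA_\lambda:=\sum_{i\in X}\cA E_i+\sum_{i\in X}\cA(K_i-q^{\lambda_i})+\cA F_{\beta_{N-1}}$ and $M^\cA(\lambda):=\cA/J^\cA_\lambda$ yields a surjection $M^\cA(\lambda)\twoheadrightarrow\gr M(\lambda)$. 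Granting that $\{F_\cJ v^\cA:j_1=\cdots=j_{N-1}=0\}$ is a basis of $M^\cA(\lambda)$, the fact that $\gr M(\lambda)$ is simultaneously a quotient of $M^\cA(\lambda)$ and, by the spanning step applied degree by degree, spanned by the images of the same indexing set forces this surjection to be an isomorphism. Any linear dependence in $M(\lambda)$ among the $B_\cJ v_\lambda$ would then, on passing to the top filtration degree, produce a dependence among the $F_\cJ v^\cA$ in $M^\cA(\lambda)$, a contradiction.

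To establish PBW for $M^\cA(\lambda)$, I will construct an explicit retraction. The map $\chi_\lambda\otimes\vep:U^0_X\cM_X^+\to\qfield$ defined by $K_i\mapsto q^{\lambda_i}$ and $E_i\mapsto 0$ is a well-defined algebra homomorphism, so the triangular decomposition \eqref{eq:Atriang} produces a linear map $\tilde\pi:\cA\to U^-$. Post-composing with the PBW projection $\pi_V:U^-\to V$, where $V$ is the span of $\{F_\cJ:j_1=\cdots=j_{N-1}=0\}$, yields $\tilde\pi_V:\cA\to V$ whose restriction to $V$ is the identity. Verifying $\tilde\pi_V(J^\cA_\lambda)=0$ then descends $\tilde\pi_V$ to a retraction $M^\cA(\lambda)\to V$ of the natural map $V\to M^\cA(\lambda)$, which together with the analog of the spanning step for $\cA$ yields the PBW claim. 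Vanishing on $\cA E_i$ and $\cA(K_i-q^{\lambda_i})$ is immediate from the character property and a short commutation past $\cM_X^+$. The hard part is $\tilde\pi_V(\cA F_{\beta_{N-1}})=0$: one commutes $F_{\beta_{N-1}}$ past elements of $\cM_X^+$ using the $\Uq$-analog of Lemma \ref{lem:EBBE}, which expresses $[E_i,F_{\beta_m}]$ in terms of lower-index root vectors, and then invokes the Levendorskii--Soibelman relations in $U^-$ to verify that every PBW reordering of $uF_{\beta_{N-1}}$ for $u\in U^-$ carries a factor $F_{\beta_k}$ with $k\le N-1$ and is hence killed by $\pi_V$. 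This commutator analysis is the main technical obstacle.
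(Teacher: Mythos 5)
Your spanning half is fine (and close in spirit to the paper's easy inclusion), but the linear-independence half has a genuine gap at its pivot, namely the assertion that the surjection $M^{\cA}(\lambda)\twoheadrightarrow \gr M(\lambda)$ is an isomorphism. The two facts you invoke --- that $\gr M(\lambda)$ is a quotient of $M^{\cA}(\lambda)$ and that it is spanned by the images of the same indexing set --- hold for \emph{every} quotient of $M^{\cA}(\lambda)$ (the image of a basis always spans), so they cannot force injectivity. What injectivity actually amounts to is the statement that $\varphi^{-1}(\gr J_\lambda)$ is no larger than your ideal $J^{\cA}_\lambda$, i.e.\ that $J_\lambda$ contains no element whose top-degree symbol lies in $\mathrm{span}\{F_\cJ\,:\,j_1=\dots=j_{N-1}=0\}$; and that is precisely the hard content of the theorem, not something that follows from the PBW statement for $M^{\cA}(\lambda)$. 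The danger is exactly the non-homogeneity of the defining relations: when one expands products $b\,B_{\beta_{N-1}}$ in $\cB_\bc$, the deformed Serre relation \eqref{eq:qSerre-Bc1} produces corrections of strictly lower filtration degree (the terms involving $F_2\cZ_1$ and $K_2^{\pm1}$), and your passage to $\gr$ discards them, so a priori $\gr(J_\lambda)$ could be strictly larger than the ideal generated by the symbols of the chosen generators. This phenomenon really occurs in this paper: the Remark after Proposition \ref{prop:finite} shows that for the closely related ideal $\ttJ_\lambda$ the associated graded ideal strictly contains the ideal $J'_\lambda$ generated by the obvious symbols. As written, your final sentence only converts a dependence among the $B_\cJ v_\lambda$ into a dependence in $\gr M(\lambda)$, i.e.\ modulo $\gr(J_\lambda)$, and identifying that with a dependence in $M^{\cA}(\lambda)$ begs the question.

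The paper closes exactly this hole by working inside $\cB_\bc$ rather than in $\gr(\cB_\bc)$: using Theorem \ref{thm:PBW-Bc} it writes $\cB_\bc=M\oplus M'\oplus N\oplus P$ and proves \emph{both} inclusions, the nontrivial one being $J_\lambda\subseteq M'\oplus N\oplus P$, established by induction on $j$ for the products $bB_{\beta_j}$ ($j\le N-1$) using Lemma \ref{lem:BblBbk}, which guarantees that the lower-degree corrections $[B_{\beta_\ell},B_{\beta_m}]_q$ land in $\cM_X\cM_{X,+}^+$ and hence stay inside the harmless summands $N\oplus P$ of $J_\lambda$. Any repair of your approach would have to supply an argument of this kind (equivalently, show $\gr(J_\lambda)=\varphi(J^{\cA}_\lambda)$), at which point you have essentially reproduced the paper's computation. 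For what it is worth, the auxiliary ingredients you flag as the technical obstacle are not the problem: the commutation of $F_{\beta_{N-1}}$ past $\cM_X^+$ via the $\Uq$-analogue of Lemma \ref{lem:EBBE}, and the fact that $u F_{\beta_k}$ for $k\le N-1$ has no PBW component supported purely on the root vectors $F_{\beta_j}$ with $j\ge N$ (a consequence of the Levendorskii--Soibelman relations and the fact that initial and final segments of the convex order span subalgebras), are both correct and provable, so your PBW claim for $M^{\cA}(\lambda)$ is believable; it is the bridge back from $\gr M(\lambda)$ to $M(\lambda)$ that is missing.
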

\begin{proof}
  Let $U_X^0(\lambda)$ denote the maximal ideal of $U_X^0$ generated by the elements $K_i-q^{\lambda(\eta^{-1}(h_{\alpha_i}))}$ for $i\in X$. Define linear subspaces of $\cB_\bc$ by
  \begin{align*}
    M&=\mathrm{span}_\qfield\{B_\cJ\,|\,\cJ\in\N_0^{N(N-1)}, j_1=\dots=j_{N-1}=0\},\\
    M'&=\mathrm{span}_\qfield\{B_\cJ\,|\,\cJ\in\N_0^{N(N-1)}, \exists i\in\{1,\dots,N-1\} \mbox{ such that } j_i\neq 0\},\\
    N&=\mathrm{span}_\qfield\{B_\cJ\,|\,\cJ\in\N_0^{N(N-1)}\}U_X^0(\lambda),\\
    P&=\cB_\bc \cM_{X,+}^+
  \end{align*}
  where $\cM_{X,+}^+$ denotes the augmentation ideal of $\cM_X^+$ defined at the beginning of Section \ref{sec:commutation}. By Theorem \ref{thm:PBW-Bc} we have a direct sum decomposition
  \begin{align*}
    \cB_\bc = M \oplus M' \oplus N \oplus P.
  \end{align*}
  By definition of $J_\lambda$ we have $N, P \subset J_\lambda$. By Lemma \ref{lem:EBBE} we know that $B_{\beta_k}\in J_\lambda$ for $k=1, \dots, N-1$. Hence $M'\subset J_\lambda$ and thus $M'\oplus N \oplus P \subset J_\lambda$.

  It remains to verify the opposite inclusion. To this end, it suffices to show that $bB_{\beta_{N-1}}\in M'\oplus N\oplus P$ for all $b\in \cB_\bc$. More generally, we will show by induction on $j$ that
  \begin{align}\label{induction-goal}
     b B_{\beta_j}\in M'\oplus N \oplus P \quad \mbox{for $j=1, \dots, N-1$}
  \end{align}
  for any $b\in \cB_\bc$.  
To prove \eqref{induction-goal} we may assume without loss of generality that $b=B_\cJ K_\beta E_\cI$ for some $\cJ\in \N_0^{N(N-1)}$, $\beta\in Q_X$ and $\cI\in \N_0^{(N-1)(N-2)}$, see Theorem \ref{thm:PBW-Bc}. We fist consider the case $j=1$. In this case we may assume $\cI=(0,\dots,0)$, because otherwise $b B_{\beta_1}=bB_1\in P$. Moreover, we may assume $K_\beta=1$ because $K_\beta$ and $B_1$ always $q$-commute. As $B_\cJ B_{\beta_1}\in M'$ by definition of the PBW-basis, we have established \eqref{induction-goal} for $j=1$.

Now assume that \eqref{induction-goal} holds for $j=1, \dots,\ell-1<N-1$ and consider $bB_{\beta_\ell}$. By Lemma \ref{lem:EBBE} we may again assume that $\cI=(0,\dots,0)$, and again we may $q$-commute the factor $K_\beta$ to the right hand side of $B_{\beta_\ell}$ and replace it by a power of $q$. Hence we may assume that $b=B_\cJ$ for some $\cJ\in \N^{N(N-1)}$. The relation $B_{\cJ}B_{\beta_\ell}\in M'\oplus N\oplus P$ now follows from Lemma \ref{lem:BblBbk} and the induction hypothesis. This completes the proof of the theorem.
\end{proof}
\subsection{Proper submodules of $M(\lambda)$}\label{sec:submodules}
By Lemma \ref{lem:hwv} and Corollary \ref{cor:gammaj-dom}, every finite-dimensional, simple $\cB_\bc$-module (of type ${\mathbf 1}$) is given as a quotient of $M(\lambda)$ by a maximal proper submodule $N(\lambda)\subset M(\lambda)$ for some dominant, integral weight $\lambda\in P_{2N-1}^+$. For the classification of finite-dimensional, simple $\cB_\bc$-modules, it remains to show that $\dim(M(\lambda)/N(\lambda))<\infty$ for all $\lambda\in P_{2N-1}^+$. To gain some understanding of the submodule $N(\lambda)$ we first construct homomorphism of Verma modules, in analogy to \cite[Lemma 5.6]{b-Jantzen96}. Recall that $\eta(h_{\gamma_i})=h_{\alpha_{i+1}}$ for $1\le i \le N-2$ and that hence, in this case, $F_{i+1}\in \cB_\bc$ is a negative root vector corresponding to the negative simple root $-\gamma_i$. Moreover, from \eqref{eq:Bbetaj} or \eqref{eq:Bbetaj2}, recall the definition of the root vector $B_{\beta_N}$ corresponding to the negative simple root $-\gamma_{N-1}$.
\begin{prop}\label{prop:Verma-hom}
  Let $\lambda\in P_{2N-1}$ and $i\in \{1,2, \dots, N-1\}$ with $n_i:=\lambda(h_{\gamma_i})\ge 0$. There is a homomorphism of $\cB_\bc$-modules
  $\varphi_i:M(\lambda-(n_i+1)\gamma_i)\rightarrow M(\lambda)$ with
  \begin{align*}
    \varphi_i(v_{\lambda-(n_i+1)\gamma_i})=\begin{cases}
                            F_{i+1}^{n_i+1}v_\lambda & \mbox{if $i\le N-2$,}\\
                            B_{\beta_{N}}^{n_{N-1}+1}v_\lambda & \mbox{if $i=N-1$.}
                                     \end{cases}
  \end{align*}  
\end{prop}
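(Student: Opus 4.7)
To construct $\varphi_i$ it suffices, by the universal property of $M(\lambda-(n_i+1)\gamma_i)$, to show that the element
\[
 w:=\begin{cases} F_{i+1}^{n_i+1}v_\lambda & \text{if $i\le N-2$,}\\ B_{\beta_N}^{n_{N-1}+1}v_\lambda & \text{if $i=N-1$,} \end{cases}
\]
satisfies the defining relations of the Verma module $M(\mu)$ with $\mu=\lambda-(n_i+1)\gamma_i$. Writing these out, we must verify (i) $E_j w=0$ for $j=2,\dots,N$, (ii) $K_j w=q^{\mu(\eta^{-1}(h_{\alpha_j}))}w$ for $j=2,\dots,N$, and (iii) $B_{\beta_{N-1}}w=0$. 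Condition (ii) in both cases is a straightforward weight computation: for $i\le N-2$ it reduces to $\eta^{-1}(h_{\alpha_{i+1}})=h_{\gamma_i}$ (and standard $D_N$ Cartan entries when $j=N$), while for $i=N-1$ one combines Lemma~\ref{lem:root-vectors} with the identity $\eta^{-1}(h_{\alpha_N})=h_{\gamma_{N-2}}+h_{\gamma_{N-1}}$ coming from \eqref{eq:etaxi2}.

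For $i\le N-2$, the plan is to emulate the standard $\uqg$-argument. For (i) with $j\ne i+1$, the generators $E_j$ and $F_{i+1}$ commute, so $E_j w=F_{i+1}^{n_i+1}E_j v_\lambda=0$. For $j=i+1$, the Chevalley triple $\{E_{i+1},F_{i+1},K_{i+1}^{\pm 1}\}\subset\cM_X$ generates a copy of $\Uqsl{2}$, and the classical commutator identity
\[
 [E_{i+1},F_{i+1}^{n_i+1}]=[n_i+1]_q F_{i+1}^{n_i}\,\frac{q^{-n_i}K_{i+1}-q^{n_i}K_{i+1}^{-1}}{q-q^{-1}},
\]
evaluated at $v_\lambda$ on which $K_{i+1}$ acts as $q^{n_i}$, gives $E_{i+1}w=0$. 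For (iii) we invoke Lemma~\ref{lem:FBBF}: relation \eqref{eq:FBBF3} yields $[F_{i+1},B_{\beta_{N-1}}]=0$ when $1\le i\le N-3$, whereas relation \eqref{eq:FBBF1} yields $F_{N-1}B_{\beta_{N-1}}=qB_{\beta_{N-1}}F_{N-1}$ when $i=N-2$. In either case $B_{\beta_{N-1}}w$ is a scalar multiple of $F_{i+1}^{n_i+1}B_{\beta_{N-1}}v_\lambda=0$.

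For $i=N-1$ the key ingredient is the missing $\Uqsl{2}$-triple of Proposition~\ref{prop:Uqsl2}. First I verify that $v_\lambda\in H(M(\lambda))$: by definition of $J_\lambda$ we have $E_j v_\lambda=0$ for $j\in X$ and $B_{\beta_{N-1}}v_\lambda=0$, and Lemma~\ref{lem:EBBE} with $i=m=N-1$ gives $B_{\beta_{N-2}}=[E_{N-1},B_{\beta_{N-1}}]K_{N-1}$, so $B_{\beta_{N-2}}v_\lambda=0$ as well. By Lemma~\ref{lem:HV} the subspace $H(M(\lambda))$ is $B_{\beta_N}$-invariant, hence $w\in H(M(\lambda))$; this immediately yields condition (i). For condition (iii), Proposition~\ref{prop:Uqsl2} supplies an algebra homomorphism $\Uqsl{2}\to\End(H(M(\lambda)))$ under which $B_{\beta_{N-1}}$, $B_{\beta_N}$, $K_NK_{N-1}^{-1}$ behave as the generators $E$, $F$, $K$ (up to nonzero scalars); since $K_NK_{N-1}^{-1}$ acts on $v_\lambda$ as $q^{\lambda(h_{\gamma_{N-1}})}=q^{n_{N-1}}$, the standard $\Uqsl{2}$-identity applied inside $H(M(\lambda))$ gives $B_{\beta_{N-1}}B_{\beta_N}^{n_{N-1}+1}v_\lambda=0$.

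The main obstacle is the case $i=N-1$: relation (iii) cannot be reached by direct commutator manipulations with any $F_j$, because $B_{\beta_{N-1}}$ does not $q$-commute nicely with $B_{\beta_N}$ in $\cB_\bc$. The essential step is therefore to recognise $v_\lambda$ as living in the distinguished subspace $H(M(\lambda))$ on which the triple $(B_{\beta_{N-1}},B_{\beta_N},K_NK_{N-1}^{-1})$ genuinely behaves as an $\slfrak(2)$-triple, so that the classical $\slfrak(2)$-reasoning from \cite[5.6]{b-Jantzen96} can be imported verbatim. Once this reduction is in place, the rest of the argument runs in parallel with the standard quantum-group case.
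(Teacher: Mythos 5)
Your proposal is correct and follows essentially the same route as the paper: for $i\le N-2$ the standard $U_q(\slfrak_2)$ computation together with Lemma \ref{lem:FBBF}, and for $i=N-1$ the observation that $v_\lambda$ lies in $H(M(\lambda))$ so that Proposition \ref{prop:Uqsl2} reduces the key relation $B_{\beta_{N-1}}B_{\beta_N}^{n_{N-1}+1}v_\lambda=0$ to the classical $\slfrak(2)$-argument of \cite[Lemma 5.6]{b-Jantzen96}. The only (harmless) difference is that you obtain $E_jB_{\beta_N}^{n_{N-1}+1}v_\lambda=0$ directly from the $B_{\beta_N}$-invariance of $H(M(\lambda))$ in Lemma \ref{lem:HV}, whereas the paper runs a separate induction via Corollary \ref{lem:inBcE}; since Lemma \ref{lem:HV} rests on that same corollary, the underlying argument is identical.
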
  
\begin{proof}
  First consider the case $i\le N-2$. By the universal property of $M(\lambda-(n_i+1)\gamma_i)$ we need to verify the following two relations
  \begin{align}
    E_jF_{i+1}^{n_i+1}v_\lambda&=0 \qquad \mbox{for $j\in X$,}\label{eq:hw1}\\
    B_{\beta_{N-1}}F_{i+1}^{n_i+1}v_\lambda&=0.\label{eq:hw2}
  \end{align}
  The proof of Equation \eqref{eq:hw1} is identical to the proof of \cite[Lemma 5.6]{b-Jantzen96}. Equation \eqref{eq:hw2} follows from the commutation relations $[B_{\beta_{N-1}},F_{N-1}]_q=0$ and $[B_{\beta_{N-1}},F_i]=0$ which hold by Lemma \ref{lem:FBBF}.

  It remains to consider the case $i=N-1$. In this case, again by the universal property of $M(\lambda-(n_{N-1}+1)\gamma_{N-1})$, we need to verify the following two relations
  \begin{align}
    E_jB_{\beta_N}^{n_{N-1}+1}v_\lambda&=0 \qquad \mbox{for $j\in X$,}\label{eq:hw3}\\
    B_{\beta_{N-1}}B_{\beta_N}^{n_{N-1}+1}v_\lambda&=0.\label{eq:hw4}
  \end{align}
  By symmetry between $N-1$ and $N$, Lemma \ref{lem:EBBE} implies that
  \begin{align}\label{eq:EBBEN}
    [E_j,B_{\beta_N}]=\begin{cases}
                       B_{\beta_{N-2}}K_{N}^{-1} & \mbox{if $j=N$},\\
                       0 & \mbox{if $j\neq N$.}  
                    \end{cases}
  \end{align}
  This implies Equation \eqref{eq:hw3} in the case $j\neq N$. To address the case $j=N$ we prove by induction that $E_N B_{\beta_N}^av_\lambda =0$ for all $a\in \N_0$. In the case $a=0$ this holds by definition of $M(\lambda)$. Assuming now that $E_N B_{\beta_N}^dv_\lambda=0$ for all $d\le a$ we calculate
  \begin{align*}
    E_NB_{\beta_{N}}^{a+1}v_\lambda=[E_N,B_{\beta_N}]B_{\beta_N}^av_\lambda 
    \stackrel{\eqref{eq:EBBEN}}{=}qK_N^{-1}B_{\beta_{N-2}}B_{\beta_N}^av_\lambda.
  \end{align*}
  This expression vanishes by Corollary \ref{lem:inBcE} and induction hypothesis, and hence Equation \eqref{eq:hw3} also holds for $j=N$.

  To verify Equation \eqref{eq:hw4}, we can calculate inside the subspace $H(M(\lambda))$ defined by \eqref{eq:HV-def}. By Proposition \eqref{prop:Uqsl2}, the root vectors $B_{\beta_{N-1}}$ and $B_{\beta_N}$ give rise to a representation of $U_q(\slfrak_2)$ on $H(M(\lambda))$ up to scaling. Hence Equation \eqref{eq:hw4} can again be verified by the calculation in the proof of \cite[Lemma 5.6]{b-Jantzen96}
\end{proof}
By the above proposition, the submodule $\widetilde{N}(\lambda):=\sum_{i=1}^{N-1}\im(\varphi_i)$ of $M(\lambda)$ is proper. Ideally, we would like to adapt the arguments in \cite[5.7-5.9]{b-Jantzen96} to show that $\widetilde{L}(\lambda)=M(\lambda)/\widetilde{N}(\lambda)$ is finite-dimensional. In attempting to do so, however, we encounter some problems. In particular, the triple $B_{\beta_{N-1}}, B_{\beta_N}, K_N K_{N-1}^{-1}$ provides a copy of $U_q(\slfrak_2)$ only on the subspace $H(\widetilde{L}(\lambda))$ and not on all of $\widetilde{L}(\lambda)$. Hence it is unclear how to adapt the arguments in  \cite[5.7-5.9]{b-Jantzen96} to show that the set of weights of $\widetilde{L}(\lambda)$ is stable under the Weyl group.

For this reason we take a different approach. We will consider a quotient of $M(\lambda)$ by a submodule which at first sight seems larger than $\widetilde{N}(\lambda)$. The following proposition will provide the main step in our argument.
\begin{prop}\label{prop:FNvl}
  Let $\lambda\in P_{2N-1}$ with $n_{N-1}:=\lambda(h_{\gamma_{N-1}})\in \N_0$ and $n_{N-2}:=\lambda(h_{\gamma_{N-2}})\in \N_0$ and set $n_N=n_{N-1}+n_{N-2}$. The submodule $\cB_\bc F_N^{n_N+1}v_\lambda \subset M(\lambda)$ is contained in $\sum_{\mu<\lambda} M(\lambda)_\mu$.
\end{prop}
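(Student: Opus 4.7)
The claim is equivalent to showing that $F_N^{n_N+1}v_\lambda$ generates a proper $\cB_\bc$-submodule of $M(\lambda)$, i.e.\ that $v_\lambda \notin \cB_\bc F_N^{n_N+1}v_\lambda$. I will aim to prove the stronger inclusion $F_N^{n_N+1}v_\lambda \in \widetilde{N}(\lambda)$, where $\widetilde{N}(\lambda) = \sum_{i=1}^{N-2}\cB_\bc F_{i+1}^{n_i+1}v_\lambda + \cB_\bc B_{\beta_N}^{n_{N-1}+1}v_\lambda$ is the proper submodule of Proposition~\ref{prop:Verma-hom}. Since $\widetilde{N}(\lambda)\subseteq \sum_{\mu<\lambda}M(\lambda)_\mu$ by properness, this gives the conclusion at once.

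As a first step I would verify that $F_N^{n_N+1}v_\lambda$ is $\cM_X^+$-highest. For $i\in X\setminus\{N\}$ this follows from $[E_i,F_N]=0$, and for $i=N$ from the standard $\uqslz$ commutation relation combined with $K_Nv_\lambda = q^{\lambda(\eta^{-1}(h_{\alpha_N}))}v_\lambda = q^{n_{N-2}+n_{N-1}}v_\lambda = q^{n_N}v_\lambda$, which gives $E_NF_N^{n_N+1}v_\lambda = 0$. I would then rewrite $F_N^{n_N+1}v_\lambda$ as a $\cB_\bc$-linear combination of vectors divisible on the right by $F_{N-1}^{n_{N-2}+1}v_\lambda$ or $B_{\beta_N}^{n_{N-1}+1}v_\lambda$. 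This mirrors the classical rank-two $B_2$ fact that the root vector for the long non-simple root $\alpha = \gamma_{N-2}+2\gamma_{N-1}$ satisfies $f_\alpha^{n_N+1}v_\lambda \in U(B_2)f_{\gamma_{N-2}}^{n_{N-2}+1}v_\lambda + U(B_2)f_{\gamma_{N-1}}^{n_{N-1}+1}v_\lambda$ for dominant $\lambda$, where the crucial numerical identity $n_N = n_{N-2}+n_{N-1}$ is built into the hypothesis. Under the classical limit from Remark~\ref{rem:q=1}, $f_{\gamma_{N-2}}$ corresponds to $F_{N-1}$ and $f_{\gamma_{N-1}}$ to $-B_{\beta_N}$, so one expects a quantum analog inside the rank-two subalgebra of $\cB_\bc$ generated by $F_{N-1}$, $B_{\beta_N}$ and the pertinent Cartan elements.

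The main obstacle is establishing the precise quantum identity behind this reduction, since $F_N$ is an independent PBW generator of $\cB_\bc$ rather than literally a commutator of $B_{\beta_N}$ and $F_{N-1}$. Applying $T_{N-1}^{-1}$ to Lemma~\ref{lem:BN-2F}, and using $T_{N-1}^{-1}(F_N)=F_N$ (as $\alpha_{N-1},\alpha_N$ are orthogonal in $D_N$), $T_{N-1}^{-1}(B_{\beta_{N-2}})=B_{\beta_{N-1}}$ and $T_{N-1}^{-1}(B_{\beta_N})=B_{\beta_{N+1}}$, yields the commutation formula
\[
B_{\beta_{N-1}}F_N^m \;=\; q^{-m}F_N^mB_{\beta_{N-1}} \;-\; q^{-1}[m]_q F_N^{m-1}B_{\beta_{N+1}},
\]
which coupled with $B_{\beta_{N+1}} = [F_{N-1},B_{\beta_N}]_q$ (Lemma~\ref{lem:FBBF} together with the $N-1\leftrightarrow N$ symmetry) and the $q$-commutativity $B_{\beta_N}F_N = qF_NB_{\beta_N}$ (equation \eqref{eq:FBBF1} with $m=N$) provides the recursive tools required. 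I expect an induction on the exponent of $F_N$, repeatedly using these relations together with the vanishing $B_{\beta_{N-1}}v_\lambda = B_{\beta_{N-2}}v_\lambda = 0$ (the latter from $B_{\beta_{N-2}}=[E_{N-1},B_{\beta_{N-1}}]K_{N-1}$ via Lemma~\ref{lem:EBBE}), to eventually funnel every term into $\widetilde{N}(\lambda)$ via a factor $B_{\beta_N}^{n_{N-1}+1}$ or $F_{N-1}^{n_{N-2}+1}$ acting on $v_\lambda$.
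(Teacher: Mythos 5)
Your reduction of the statement to $v_\lambda\notin\cB_\bc F_N^{n_N+1}v_\lambda$ is fine, and the auxiliary identities you assemble are all correct: $E_iF_N^{n_N+1}v_\lambda=0$ for $i\in X$ (using $K_Nv_\lambda=q^{n_N}v_\lambda$), the $T_{N-1}^{-1}$-twist of Lemma \ref{lem:BN-2F}, $[B_{\beta_N},F_N]_q=0$, and $B_{\beta_{N+1}}=[F_{N-1},B_{\beta_N}]_q$. But the heart of your argument --- that $F_N^{n_N+1}v_\lambda$ lies in $\widetilde{N}(\lambda)$ --- is never proved; you only announce an induction that you ``expect'' to funnel every term into $\widetilde{N}(\lambda)$. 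This missing step is not a routine verification: it is equivalent to the inclusion $\ttN(\lambda)\subseteq\widetilde{N}(\lambda)$, hence by Remark \ref{rem:NtNtt} to $\ttN(\lambda)=\widetilde{N}(\lambda)$, which the paper obtains only in Corollary \ref{cor:Weyl} via specialisation, under the extra hypotheses $\mathrm{char}(\qfield)=0$ and $q$ transcendental over $\Q$, whereas Proposition \ref{prop:FNvl} is needed over an arbitrary field with $q$ not a root of unity. The classical rank-two fact you invoke is true, but its proof rests on $\slfrak(2)$-theory and Weyl-group invariance for the short simple root, which is precisely what fails here: the triple $B_{\beta_{N-1}},B_{\beta_N},K_NK_{N-1}^{-1}$ acts as $U_q(\slfrak(2))$ only on the subspace $H(V)$, and this is the stated reason the arguments of \cite[5.7--5.9]{b-Jantzen96} cannot be adapted. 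Note also that $F_N^{n_N+1}v_\lambda$ is not annihilated by $B_{\beta_{N-1}}$ (see the remark following Proposition \ref{prop:FNvl}), so no Verma-type universal property is available to organise your induction, and nothing in your outline shows how the crucial numerical identity $n_N=n_{N-2}+n_{N-1}$ would enter.

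The paper's proof goes in the opposite direction and is much cheaper: it computes only the weight-$\lambda$ component of $\cB_\bc F_N^{n_N+1}v_\lambda$. Since $F_N$ has $\cB_\bc$-weight $-\gamma_{N-2}-2\gamma_{N-1}$, the $E_i$ kill $F_N^{n_N+1}v_\lambda$, and the only remaining weight-raising PBW factors are $B_{\beta_1},\dots,B_{\beta_{N-1}}$, Theorem \ref{thm:PBW-Bc} shows that this component is spanned by $B_{\beta_{N-1}}^{n_N+1}B_{\beta_{N-2}}^{n_N+1}F_N^{n_N+1}v_\lambda$; Lemma \ref{lem:BkFm} then gives $B_{\beta_{N-2}}^{n_N+1}F_N^{n_N+1}v_\lambda=(-q^{-1})^{n_N+1}[n_N+1]^!_q\,B_{\beta_N}^{n_N+1}v_\lambda\in\im(\varphi_{N-1})$, whose weight-$\lambda$ part is zero. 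So the inclusion actually used is $\im(\varphi_{N-1})\subseteq\cB_\bc F_N^{n_N+1}v_\lambda$ --- the reverse of the one you are after. As it stands, your proposal has a genuine gap at its central step, and closing it would amount to proving a statement strictly stronger than the proposition in the general setting of the paper.
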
  
We will prove Proposition \ref{prop:FNvl} in Section \ref{sec:FNvl}. By the above proposition the submodule 
\begin{align*}
  \widetilde{N}(\lambda) + \cB_\bc F_N^{n_N+1}v_\lambda
\end{align*}
of $M(\lambda)$ is contained in $\sum_{\mu<\lambda} M(\lambda)_\mu$ and hence proper. We will show in Section \ref{sec:filt-grad} using filtered-graded arguments that the quotient $M(\lambda)/ \big(\widetilde{N}(\lambda) + \cB_\bc F_N^{n_N+1}v_\lambda\big)$ is finite-dimensional. 
\begin{rema}
  The element $F_N^{m+1}v_\lambda\in M(\lambda)$ is not annihilated by the positive simple root vector $B_{\beta_{N-1}}$ for any $m\in \N_0$. Indeed, by Equation \eqref{eq:Bbetaj2} and the quantum Serre relations, we have
  \begin{align*}
    [B_{\beta_{N+1}},F_N]_q&=[[F_N,[F_{N-1},B_{\beta_{N-2}}]_q]_q,F_N]_q
      =[F_{N-1},[[F_N,B_{\beta_{N-2}}]_q,F_N]_q]_q = 0
  \end{align*}
  because $F_{N-1}$ and $F_N$ commute. Using the relation $[F_N,B_{\beta_{N-1}}]_q=B_{\beta_{N+1}}$ which also holds by \eqref{eq:Bbetaj2}, we hence obtain for any $m\in \N_0$ the relation
  \begin{align*}
    B_{\beta_{N-1}}F_N^{m+1}v_\lambda&=q^{-1}F_N B_{\beta_{N-1}} F_N^{m}v_\lambda - q^{-1}B_{\beta_{n+1}}F_N^{m}v_\lambda\\
    &=-q^{-1}\sum_{k=0}^{m}(q^{-1})^k F_N^k B_{\beta_{N+1}}F_N^{m-k}v_\lambda\\
    &=-q^{-1}\sum_{k=0}^m q^{m-2k} F_N^m B_{\beta_{N+1}}v_\lambda\\
    &=-q^{-1}[m+1]_q F_N^m B_{\beta_{N+1}}v_\lambda.
  \end{align*}
  By the PBW-Theorem for $M(\lambda)$, Theorem \ref{thm:PBWMl}, the above expression is not zero in $M(\lambda)$.
\end{rema}  

\subsection{Proof of Proposition \ref{prop:FNvl}}\label{sec:FNvl}
We begin with a preparatory Lemma. Recall that by convention $[0]^!_q=1$ and that $[m]^!_q=[m]_q [m-1]^!_q$ for all $m\in \N$.
\begin{lem}\label{lem:BkFm}
  Let $\lambda\in P_{2N-1}$. For all $\ell,m\in \N_0$ with $\ell\le m$ the relation
  \begin{align}\label{eq:BkFm}
    B_{\beta_{N-2}}^\ell F_N^m \,v_\lambda = (-q^{-1})^\ell \frac{[m]^!_q}{[m-\ell]^!_q} F_N^{m-\ell} B_{\beta_N}^\ell\,v_\lambda
  \end{align}
  holds in the Verma module $M(\lambda)$.
\end{lem}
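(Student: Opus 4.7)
The plan is to prove the identity by induction on $\ell$, using Lemma \ref{lem:BN-2F} to commute $B_{\beta_{N-2}}$ past powers of $F_N$, and cleaning up extra terms by showing that both $B_{\beta_{N-2}} v_\lambda = 0$ and, more generally, $B_{\beta_{N-2}} B_{\beta_N}^\ell v_\lambda = 0$ for every $\ell \in \N_0$. The base case $\ell = 0$ is trivial.

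First I would establish the two vanishing facts. Since $B_{\beta_{N-1}} v_\lambda = 0$ and $E_{N-1} v_\lambda = 0$ by definition of $J_\lambda$, the commutation relation $[E_{N-1}, B_{\beta_{N-1}}] = B_{\beta_{N-2}} K_{N-1}^{-1}$ from Lemma \ref{lem:EBBE} gives $B_{\beta_{N-2}} K_{N-1}^{-1} v_\lambda = 0$, hence $B_{\beta_{N-2}} v_\lambda = 0$ since $K_{N-1}$ acts invertibly on $v_\lambda$. Combined with $E_i v_\lambda = 0$ for all $i \in X$, this means $v_\lambda$ lies in the subspace $H(M(\lambda))$ defined in \eqref{eq:HV-def}. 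By Lemma \ref{lem:HV}, $H(M(\lambda))$ is stable under $B_{\beta_N}$, so $B_{\beta_N}^\ell v_\lambda \in H(M(\lambda))$, which in particular yields $B_{\beta_{N-2}} B_{\beta_N}^\ell v_\lambda = 0$ for all $\ell \ge 0$.

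For the inductive step, assuming the formula for $\ell$, I would compute
\begin{align*}
B_{\beta_{N-2}}^{\ell+1} F_N^m v_\lambda
  &= (-q^{-1})^\ell \frac{[m]^!_q}{[m-\ell]^!_q}\, B_{\beta_{N-2}} F_N^{m-\ell} B_{\beta_N}^\ell v_\lambda,
\end{align*}
and then apply Lemma \ref{lem:BN-2F} with $m$ replaced by $m-\ell$ to obtain
\begin{align*}
B_{\beta_{N-2}} F_N^{m-\ell} B_{\beta_N}^\ell v_\lambda
  = q^{-(m-\ell)} F_N^{m-\ell} B_{\beta_{N-2}} B_{\beta_N}^\ell v_\lambda
    - q^{-1}[m-\ell]_q F_N^{m-\ell-1} B_{\beta_N}^{\ell+1} v_\lambda.
\end{align*}
The first summand vanishes by the preparatory step above, and collecting the scalar factors with the identity $[m-\ell]_q/[m-\ell]^!_q = 1/[m-\ell-1]^!_q$ delivers exactly the desired formula for $\ell+1$. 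No serious obstacle is anticipated: once the subspace $H(M(\lambda))$ is used to kill the unwanted $B_{\beta_{N-2}} B_{\beta_N}^\ell v_\lambda$ term, the remainder is a routine bookkeeping of $q$-factorials.
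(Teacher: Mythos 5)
Your proof is correct, and the induction itself (trivial base case, applying Lemma \ref{lem:BN-2F} with $m$ replaced by $m-\ell$, and the bookkeeping of the $q$-factorials) is exactly the paper's argument. The one place where you genuinely diverge is the vanishing of the unwanted term $B_{\beta_{N-2}}B_{\beta_N}^\ell v_\lambda$: you first get $B_{\beta_{N-2}}v_\lambda=0$ from $[E_{N-1},B_{\beta_{N-1}}]=B_{\beta_{N-2}}K_{N-1}^{-1}$ together with $E_{N-1}v_\lambda=B_{\beta_{N-1}}v_\lambda=0$, conclude $v_\lambda\in H(M(\lambda))$, and then invoke the invariance of $H(M(\lambda))$ under $B_{\beta_N}$ from Lemma \ref{lem:HV}, which in turn rests on Corollary \ref{lem:inBcE}. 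The paper instead disposes of this term by a pure weight argument: by Lemma \ref{lem:root-vectors} the element $B_{\beta_{N-2}}$ has weight $\gamma_{N-2}+\gamma_{N-1}$ and $B_{\beta_N}$ has weight $-\gamma_{N-1}$, so $B_{\beta_{N-2}}B_{\beta_N}^\ell v_\lambda$ lies in $M(\lambda)_{\lambda-(\ell-1)\gamma_{N-1}+\gamma_{N-2}}$, which is zero because this weight is not $\le\lambda$. Both mechanisms are valid: the weight argument is lighter, needing only that all weights of $M(\lambda)$ are $\le\lambda$ (Theorem \ref{thm:PBWMl}), whereas your route reuses the $H(V)$ machinery that the paper deploys anyway (e.g.\ for Equation \eqref{eq:hw4} in Proposition \ref{prop:Verma-hom}), so nothing is lost either way.
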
  
\begin{proof}
  We proceed by induction on $\ell$. For $\ell=0$ there is nothing to show. Assume that Equation \eqref{eq:BkFm} holds for some $\ell<m$. Using Lemma \ref{lem:BN-2F} we calculate
  \begin{align}
    B_{\beta_{N-2}}^{\ell+1} F_N^m \,v_\lambda &= (-q^{-1})^\ell \frac{[m]^!_q}{[m-\ell]^!_q} B_{\beta_{N-2}}F_N^{m-\ell} B_{\beta_N}^\ell\,v_\lambda\nonumber\\
    &= (-q^{-1})^\ell \frac{[m]^!_q}{[m-\ell]^!_q}\bigg(q^{-(m-\ell)}F_N^{m-\ell} B_{\beta_{N-2}} B_{\beta_N}^\ell\, v_\lambda \label{eq:BFstep}\\
    & \qquad \qquad \qquad- q^{-1}[m-\ell]_q F_N^{m-\ell-1} B_{\beta_N}^{\ell+1} \,v_\lambda \bigg)\nonumber
  \end{align}
  Recall from Lemma \ref{lem:root-vectors} that $B_{\beta_{N}}$ is a root vector of weight $-\gamma_{N-1}$ and that $B_{\beta_{N-2}}=[E_{N-1},B_{\beta_{N-1}}]_q K_{N-1}$ is a root vector of weight $\gamma_{N-2}+\gamma_{N-1}$. Hence
  \begin{align*}
    B_{\beta_{N-2}}B_{\beta_N}^\ell \,v_\lambda \in M(\lambda)_{\lambda-(\ell-1)\gamma_{N-1}+\gamma_{N-2}}=\{0\}
  \end{align*}
  as $\lambda-(\ell-1)\gamma_{N-1}+\gamma_{N-2}\nleq \lambda$. This implies that the first term in the bracket in Equation \eqref{eq:BFstep} vanishes. Hence we have
   \begin{align*}
    B_{\beta_{N-2}}^{\ell+1} F_N^m \,v_\lambda &=(-q^{-1})^{\ell+1} \frac{[m]^!_q}{[m-(\ell+1)]^!_q} F_N^{m-(\ell+1)}B_{\beta_N}^{\ell+1} \,v_\lambda
   \end{align*}
  as desired.   
\end{proof}
Recall the homomorphism of Verma module $\varphi_{N-1}: M(\lambda-(n_{N-1}+1)\gamma_{N-1})\rightarrow M(\lambda)$ from Proposition \ref{prop:Verma-hom}. Lemma \ref{lem:BkFm} has the following consequence.
\begin{cor}\label{cor:in-phiN-1}
  Let $\lambda\in P_{2N-1}$ with $n_{N-1}=\lambda(h_{\gamma_{N-1}})\in \N_0$ and $n_{N-2}=\lambda(h_{\gamma_{N-2}})\in \N_0$ and set $n_N=n_{N-1}+n_{N-2}$. The following relation holds
  \begin{align*}
     B_{\beta_{N-2}}^{n_N+1} F_N^{n_N+1}\,v_\lambda\in \mathrm{Im}(\varphi_{N-1}).
  \end{align*}  
\end{cor}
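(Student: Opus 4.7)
The plan is to derive the corollary as a direct consequence of Lemma \ref{lem:BkFm} by specialising the parameters to $\ell = m = n_N + 1$. With this choice, the right-hand side of \eqref{eq:BkFm} collapses, since $F_N^{m-\ell} = F_N^0 = 1$, and we obtain
\begin{align*}
  B_{\beta_{N-2}}^{n_N+1} F_N^{n_N+1}\,v_\lambda = (-q^{-1})^{n_N+1}\, [n_N+1]^!_q\, B_{\beta_N}^{n_N+1}\,v_\lambda.
\end{align*}
The scalar $[n_N+1]^!_q$ is nonzero because $q$ is not a root of unity, so the claim reduces to showing that $B_{\beta_N}^{n_N+1}\,v_\lambda$ lies in $\mathrm{Im}(\varphi_{N-1})$.

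For this I would exploit the fact that $n_N = n_{N-1} + n_{N-2}$ with $n_{N-2}\ge 0$, so that $n_N + 1 \ge n_{N-1} + 1$. Consequently
\begin{align*}
  B_{\beta_N}^{n_N+1}\,v_\lambda = B_{\beta_N}^{n_{N-2}}\cdot \bigl(B_{\beta_N}^{n_{N-1}+1}\,v_\lambda\bigr) = B_{\beta_N}^{n_{N-2}}\cdot \varphi_{N-1}(v_{\lambda-(n_{N-1}+1)\gamma_{N-1}}),
\end{align*}
which manifestly belongs to $\cB_\bc\cdot \varphi_{N-1}(v_{\lambda-(n_{N-1}+1)\gamma_{N-1}}) = \mathrm{Im}(\varphi_{N-1})$. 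Combining the two displays yields the assertion.

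There is no genuine obstacle here beyond verifying that Lemma \ref{lem:BkFm} can indeed be applied with $\ell = m$ (the hypothesis is $\ell \le m$, which is satisfied with equality) and that the inequality $n_N + 1 \ge n_{N-1} + 1$ needed to extract a factor of $B_{\beta_N}^{n_{N-1}+1}$ on the right is exactly what the definition $n_N = n_{N-1} + n_{N-2}$ together with $n_{N-2}\ge 0$ guarantees. All the real work has already been carried out in Lemma \ref{lem:BkFm}; the corollary is simply the boundary case of that identity re-expressed through the Verma module homomorphism $\varphi_{N-1}$ from Proposition \ref{prop:Verma-hom}.
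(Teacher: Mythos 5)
Your proposal is correct and follows exactly the paper's own argument: apply Lemma \ref{lem:BkFm} with $\ell=m=n_N+1$ to get $B_{\beta_{N-2}}^{n_N+1}F_N^{n_N+1}v_\lambda=(-q^{-1})^{n_N+1}[n_N+1]^!_q\,B_{\beta_N}^{n_N+1}v_\lambda$, and then use $n_N\ge n_{N-1}$ to see this lies in the submodule generated by $B_{\beta_N}^{n_{N-1}+1}v_\lambda$, i.e.\ in $\mathrm{Im}(\varphi_{N-1})$. (The remark that $[n_N+1]^!_q\neq 0$ is harmless but not needed for this inclusion.)
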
  
\begin{proof}
  By definition, $\mathrm{Im}(\varphi_{N-1})$ is the $\cB_\bc$-submodule of $M(\lambda)$ generated by the element $B_{\beta_N}^{n_{N-1}+1}\,v_\lambda$. By Lemma \ref{lem:BkFm} we have
   \begin{align}\label{eq:BFImphi}
     B_{\beta_{N-2}}^{n_N+1} F_N^{n_N+1}\,v_\lambda=(-q^{-1})^{n_N+1}[n_N+1]^!_qB_{\beta_N}^{n_N+1}\,v_\lambda\in \mathrm{Im}(\varphi_{N-1})
   \end{align}
   as $n_N\ge n_{N-1}$.
\end{proof}  
\begin{proof}[Proof of Proposition \ref{prop:FNvl}]
  For all $i\in X$ one obtains $E_i F_N^{n_N+1}v_\lambda=0$, again by the argument in \cite[Lemma 5.6]{b-Jantzen96}. Recall from Lemma \ref{lem:root-vectors} that $F_N$ is a root vector for $\cB_\bc$ of weight $-\gamma_{N-2}-2\gamma_{N-1}$. Similarly, $B_{\beta_j}$ for $j=1, \dots,N-1$ is a root vector for $\cB_\bc$ of weight $\sum_{\ell=j}^{N-1}\gamma_k$. Using the PBW-Theorem for $\cB_\bc$, Theorem \ref{thm:PBW-Bc}, we conclude that
  \begin{align*}
    (\cB_\bc F_N^{n_N+1}v_\lambda)_\lambda &= \mathrm{span}_\qfield\Big\{B_{\beta_{N-1}}^{j_{N-1}}\dots B_{\beta_1}^{j_1} F_{N}^{n_N+1}v_\lambda\,\big|\\
    & \qquad \qquad \qquad \qquad \sum_{i=1}^{N-1}j_i\sum_{\ell=i}^{N-1}\gamma_\ell = (n_N+1)(\gamma_{N-2}+2\gamma_{N-1})\Big\}\\
    &=\qfield B_{\beta_{N-1}}^{n_N+1} B_{\beta_{N-2}}^{n_N+1} F_N^{n_N+1}v_\lambda.
  \end{align*}
By Corollary \ref{cor:in-phiN-1} we know that $ B_{\beta_{N-1}}^{n_N+1} B_{\beta_{N-2}}^{n_N+1} F_N^{n_N+1}v_\lambda\in \mathrm{Im}(\varphi_{N-1})_{\lambda}=\{0\}$. Hence $v_\lambda \notin \cB_\bc F_N^{n_N+1}v_\lambda$.
\end{proof}

\subsection{Classification of finite-dimensional, irreducible $\cB_\bc$-modules}\label{sec:filt-grad}
Fix $\lambda\in P_{2N-1}^+$. Recall that we use the notation $n_i=\lambda(\gamma_{i})$ for $i=1, \dots, N-1$ and $n_N=\lambda(h_{\gamma_{N-2}}+h_{\gamma_{N-1}})$. Define a submodule $\ttN(\lambda)$ of the Verma module $M(\lambda)$ by
\begin{align*}
  \ttN(\lambda)=\sum_{i=1}^{N-2}\cB_\bc F_{i+1}^{n_i+1}v_\lambda +\cB_\bc F_N^{n_N+1}v_\lambda.
\end{align*}
\begin{rema}\label{rem:NtNtt}
  Equation \eqref{eq:BFImphi} implies that $\im(\varphi_{N-1})\subset \cB_\bc F_N^{n_N+1}v_\lambda$. Indeed, by an $\slfrak(2)$-argument and Equation \eqref{eq:BFImphi} we see that $B_{\beta_{N-1}}^{n_{N-2}} B_{\beta_{N-2}}^{n_N+1} F_N^{n_N+1}v_\lambda$ is a nonzero scalar multiple of $B_{\beta_N}^{n_{N-1}+1}v_\lambda$. For this reason, we do not need to include a summand $\cB_\bc B_{\beta_N}^{n_{N-1}+1}v_\lambda$ in the definition of $\ttN(\lambda)$. 
\end{rema}
By Propositions \ref{prop:Verma-hom} and \ref{prop:FNvl} we have $\ttN(\lambda) \neq M(\lambda)$. Consider the factor module
\begin{align*}
  \ttL(\lambda)=M(\lambda)/\ttN(\lambda).
\end{align*}
We claim that $\ttL(\lambda)$ is finite-dimensional.
\begin{prop}\label{prop:finite}
  For all $\lambda\in P^+_{2N-1}$ the relation $0< \dim(\ttL(\lambda))<\infty$ holds.
\end{prop}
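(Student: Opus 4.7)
The plan is a filtered-graded reduction, using the standard filtration of Section \ref{sec:standardFilt} to reduce the problem to a classical computation for the quantum Weyl module over $\cM_X=U_q(\sofrak(2N-2))$. Equip $M(\lambda)$ with the filtration $\cF_j M(\lambda):=\cF_j(\cB_\bc)v_\lambda$, and let $\ttN(\lambda)$ and $\ttL(\lambda)$ carry the induced subspace and quotient filtrations. By Theorem \ref{thm:PBWMl} each $\cF_j M(\lambda)$ is finite-dimensional, so $\dim\ttL(\lambda)<\infty$ is equivalent to $\dim\gr\ttL(\lambda)<\infty$.

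Under the isomorphism $\varphi:\cA\to\gr\cB_\bc$ from Section \ref{sec:standardFilt} — which sends $F_{\beta_j}\in\cA$ to the class of $B_{\beta_j}$ and is compatible with the Lusztig action by Lemma \ref{lem:grT} — the graded module $\gr M(\lambda)$ becomes a cyclic $\cA$-module generated by $v_\lambda$ with relations $E_iv_\lambda=0$, $(K_i-q^{\lambda(\eta^{-1}(h_{\alpha_i}))})v_\lambda=0$ for $i\in X$, and $F_{\beta_{N-1}}v_\lambda=0$. Using the identity $[E_m,F_{\beta_m}]=F_{\beta_{m-1}}K_m^{-1}$ in $\cA$ (the graded analogue of Lemma \ref{lem:EBBE}) and a downward induction on $m$, I obtain $F_{\beta_j}v_\lambda=0$ in $\gr M(\lambda)$ for every $j=1,\dots,N-1$; in particular $v_\lambda$ is an $\cM_X$-highest weight vector of the dominant integral weight $\mu:=\lambda|_{\cM_X}$, where $\mu(h_{\alpha_j})=n_{j-1}$ for $j=2,\dots,N-1$ and $\mu(h_{\alpha_N})=n_N=n_{N-2}+n_{N-1}$.

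The remaining step is to bound each exponent in the PBW basis $F_{\beta_{N(N-1)}}^{j_{N(N-1)}}\cdots F_{\beta_N}^{j_N}v_\lambda$ of $\gr M(\lambda)$ modulo $\gr\ttN(\lambda)$. For $k\ge 2N-1$ the root vector $F_{\beta_k}$ lies in $\cM_X^-$, and the generators $F_{i+1}^{n_i+1}v_\lambda$ ($i=1,\dots,N-2$) together with $F_N^{n_N+1}v_\lambda$ of $\gr\ttN(\lambda)$ are exactly the Jantzen-type simple-root annihilation relations defining the $\cM_X$-Weyl module of highest weight $\mu$. This module is finite-dimensional by \cite[5.10]{b-Jantzen96}, yielding bounds on $j_k$ for $k\ge 2N-1$. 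For $k=N,\dots,2(N-1)$ I apply the graded analogue of Lemma \ref{lem:BkFm}: for $k=N$ directly,
\begin{align*}
F_{\beta_{N-2}}^{n_N+1}F_N^{n_N+1}v_\lambda=(-q^{-1})^{n_N+1}[n_N+1]_q^!\,F_{\beta_N}^{n_N+1}v_\lambda
\end{align*}
in $\gr M(\lambda)$, whose left-hand side lies in $\gr\ttN(\lambda)$, so $F_{\beta_N}^{n_N+1}v_\lambda\in\gr\ttN(\lambda)$. For $k=N+1,\dots,2(N-1)$ the nested-commutator identities of Lemma \ref{lem:Bbetaj2} transferred to $\cA$ via $\varphi$ (for example $F_{\beta_{N+1}}=[F_N,F_{\beta_{N-1}}]_q$ in $\cA$), combined with the established vanishing $F_{\beta_j}v_\lambda=0$ for $j\le N-1$ and a repeated application of the Lemma \ref{lem:BkFm} template, produce analogous conclusions $F_{\beta_k}^{M_k+1}v_\lambda\in\gr\ttN(\lambda)$ for suitable $M_k\in\N_0$.

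Collecting these bounds shows only finitely many PBW monomials are nonzero in $\gr\ttL(\lambda)$, so $\dim\gr\ttL(\lambda)<\infty$ and hence $\dim\ttL(\lambda)<\infty$. Non-triviality $\dim\ttL(\lambda)>0$ follows from Propositions \ref{prop:Verma-hom} and \ref{prop:FNvl}, which together force $\ttN(\lambda)\subseteq\sum_{\mu'<\lambda}M(\lambda)_{\mu'}$ and in particular $v_\lambda\notin\ttN(\lambda)$. The main obstacle is the step for $k=N+1,\dots,2(N-1)$: propagating the vanishing from $F_{\beta_N}^{n_N+1}v_\lambda$ to analogous relations for the remaining root vectors $F_{\beta_{N+1}},\dots,F_{\beta_{2(N-1)}}$ requires a careful induction on the nested Lusztig-commutator structure of these elements in $\cA$, together with the $q$-commutation relations between the $F_{\beta_k}$ derived from the quantum Serre relations.
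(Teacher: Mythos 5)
Your overall strategy -- filter $M(\lambda)$ by the standard filtration of Section \ref{sec:standardFilt}, pass to the associated graded module over $\gr(\cB_\bc)\cong\cA$, and prove finiteness there -- is exactly the route the paper takes, and your non-triviality argument ($v_\lambda\notin\ttN(\lambda)$ via Propositions \ref{prop:Verma-hom} and \ref{prop:FNvl}) is correct. However, the finiteness part of your argument has a genuine gap. The step you yourself flag as ``the main obstacle'' -- producing relations $F_{\beta_k}^{M_k+1}v_\lambda\in\gr\ttN(\lambda)$ for $k=N+1,\dots,2(N-1)$ -- is not carried out; it is precisely the hard combinatorial content, and deferring it to ``a careful induction on the nested Lusztig-commutator structure'' leaves the proof incomplete. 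Moreover, even granting such bounds for every individual root vector, your concluding sentence (``collecting these bounds shows only finitely many PBW monomials are nonzero'') requires a straightening argument: in the ordered monomials $F_{\beta_{N(N-1)}}^{j_{N(N-1)}}\cdots F_{\beta_N}^{j_N}v_\lambda$ the factors do not commute, so to reduce a monomial with one large exponent modulo $\gr\ttN(\lambda)$ you must move powers of the non-simple root vectors past each other, and no such argument is supplied (this is the content of the proof of \cite[Lemma 5.9]{b-Jantzen96} in the simple-root case, and it does not transfer for free).

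The observation that makes all of this unnecessary, and which your setup misses, is that $B_1=B_{\beta_1}$ already lies in $\ttJ_\lambda$ (indeed $B_{\beta_j}v_\lambda=0$ for all $j\le N-1$, as used in Theorem \ref{thm:PBWMl}), and since $\deg(B_1)=1$ its symbol under $\varphi$ is $F_1\in\cA$. Hence $F_1\in\gr(\ttJ_\lambda)$, so writing $\ttL(\lambda)=\cB_\bc/\ttJ_\lambda$ one sees that $\gr(\ttL(\lambda))$ is a quotient of $\cA/J'_\lambda$, where $J'_\lambda$ is generated by $K_i-q^{\lambda(\eta^{-1}(h_{\alpha_i}))}$, $E_i$, $F_1$, $F_j^{n_{j-1}+1}$ ($j\in X\setminus\{N\}$) and $F_N^{n_N+1}$. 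By the triangular decomposition \eqref{eq:Atriang} this quotient is isomorphic to $U^-\big/\big(U^-F_1+\sum_{j=2}^{N-1}U^-F_j^{n_{j-1}+1}+U^-F_N^{n_N+1}\big)$, which is finite dimensional by a single application of \cite[Lemma 5.9]{b-Jantzen96} to the \emph{simple} root vectors of $U_q(\sofrak(2N))$. In other words, once $F_1$ is in the graded ideal, no analysis of the mixed root vectors $F_{\beta_k}$ for $N\le k\le 2(N-1)$, nor of the $\cM_X$-Weyl module, is needed; your problematic steps disappear entirely.
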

\begin{proof}
  The left-hand inequality holds because $v_\lambda\notin \ttN(\lambda)$. To verify the right-hand inequality define a left ideal $\ttJ_\lambda\subset \cB_\bc$ by
  \begin{align*}
     \ttJ_\lambda =J_\lambda + \sum_{i=1}^{N-2}\cB_\bc F_{i+1}^{n_i+1}  +\cB_\bc F_N^{n_N+1}.
  \end{align*}
By construction we have
\begin{align}
  \ttL(\lambda)=\cB_\bc/\ttJ_\lambda.
\end{align}  
The standard filtration $\cF_\ast$ on $\cB_\bc$ from Section \ref{sec:standardFilt} induces a filtration $\cG_\ast$ on $\ttL(\lambda)$ defined by
\begin{align*}
  \cG_n(\ttL(\lambda))=\cF_n(\cB_\bc)/\ttJ_\lambda\cap \cF_n(\cB_\bc).
\end{align*}
In this way $\ttL(\lambda)$ is a filtered left module over the filtered algebra $\cB_\bc$, see e.g.~\cite[Chapter 6]{b-KrauseLenagan2000}. In particular, the associated graded space $\gr(\ttL(\lambda))$ is a graded module over the graded algebra $\gr(\cB_\bc)\cong\cA$. By the isomorphism theorems we have
\begin{align*}
  \gr_{n}(\ttL(\lambda))=\frac{\cF_n(\cB_\bc)/\ttJ_\lambda\cap \cF_n(\cB_\bc)}{\cF_{n-1}(\cB_\bc)/\ttJ_\lambda\cap \cF_{n-1}(\cB_\bc)}\cong \cA_n \big/\gr_n(\ttJ_\lambda).
\end{align*}
Hence it suffices to show that $\cA/\gr(\ttJ_\lambda)$ is a finite-dimensional vector space.
As $\ttJ_\lambda$ contains the elements
\begin{align*}
  K_i-q^{\lambda(\eta^{-1}(h_{\alpha_i}))}, E_i, B_1, F_j^{n_{j{-}1}+1}, F_N^{n_N+1} \qquad \mbox{for $i\in X$, $j\in X\setminus\{N\}$},
\end{align*}  
the left ideal $\gr(\ttJ_\lambda)\subset\gr(\cB_\bc)\cong \cA$ contains the elements
\begin{align}\label{eq:J'}
   K_i-q^{\lambda(\eta^{-1}(h_{\alpha_i}))}, E_i, F_1, F_j^{n_{j{-}1}+1}, F_N^{n_N+1} \qquad \mbox{for $i\in X$, $j\in X\setminus\{N\}$}.
\end{align}
Let $J'_\lambda\subset \cA$ be the left ideal generated by the elements in \eqref{eq:J'}. As $J'_\lambda\subset \gr(\ttJ_\lambda)$, we see that $\gr(\ttL(\lambda))$ is a quotient of $\cA/J'_\lambda$. By the triangular decomposition \eqref{eq:Atriang} of $\cA$ we have
\begin{align*}
  \cA/J'_\lambda \cong U^-\Big/\Big(U^-F_1+\sum_{j=2}^{N-1} U^- F_j^{n_{j{-}1}+1} + U^- F_N^{n_N+1}\Big).
\end{align*}
Hence the quotient $\cA/J'_\lambda$ is finite-dimensional by \cite[Lemma 5.9]{b-Jantzen96}. This implies that $\gr(\ttL(\lambda))$ and hence $\ttL(\lambda)$ is finite-dimensional.
\end{proof}
\begin{rema} 
  In general, the inclusion $J'_\lambda\subset \gr(\ttJ(\lambda))$ is proper. Indeed, the ideal $\ttJ_\lambda$ contains the root vector $B_{\beta_{N-1}}$ and hence $F_{\beta_{N-1}}\in \gr(\ttJ(\lambda))$ by \eqref{eq:Fbetaj}, \eqref{eq:Bbetaj2}. However, if $n_{N-1}\ge 1$ then $F_{\beta_{N-1}}\notin J'_\lambda$.
\end{rema}  
For any $\lambda\in P_{2N-1}$ the Verma module $M(\lambda)$ has a unique simple quotient $L(\lambda)=M(\lambda)/N(\lambda)$, where $N(\lambda)\subset M(\lambda)$ is the unique maximal proper submodule. By construction $\ttN(\lambda)\subset N(\lambda)$ if $\lambda\in P_{2N-1}^+$ and hence Proposition \ref{prop:finite} implies that $\dim(L(\lambda))<\infty$ in this case. We can now adapt \cite[Theorem 5.10]{b-Jantzen96} to our setting.
\begin{thm}\label{thm:Llambda}
  Let $\qfield$ be any field and $q\in \qfield$ not a root of unity.
  For each $\lambda\in P_{2N-1}^+$ the simple $\cB_\bc$-module $L(\lambda)$ has finite dimension. Each finite-dimensional simple $\cB_\bc$-module (of type ${\bf 1}$) is isomorphic to exactly one $L(\lambda)$ with $\lambda\in P_{2N-1}^+$.
\end{thm}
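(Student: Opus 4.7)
The strategy is to assemble the already-established results in three short steps. First, for $\lambda\in P^+_{2N-1}$ I would deduce finite-dimensionality of $L(\lambda)$ from the existence of a sufficiently large proper submodule of $M(\lambda)$: Propositions \ref{prop:Verma-hom} and \ref{prop:FNvl} show that $\ttN(\lambda)$ is a proper submodule of $M(\lambda)$, hence contained in the unique maximal proper submodule $N(\lambda)$. This gives a surjection $\ttL(\lambda)\twoheadrightarrow L(\lambda)$, and Proposition \ref{prop:finite} then forces $\dim L(\lambda)<\infty$.

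Second, I would show conversely that every simple finite-dimensional $\cB_\bc$-module $V$ of type $\mathbf{1}$ arises as some $L(\lambda)$. Lemma \ref{lem:hwv} produces a nonzero weight vector $v\in V_\lambda$ killed by $E_i$ for $i\in X$ and by $B_{\beta_{N-1}}$, and Corollary \ref{cor:gammaj-dom} then places $\lambda$ in $P^+_{2N-1}$. The relations satisfied by $v$ are precisely the defining relations of the generator $v_\lambda\in M(\lambda)$, so $v_\lambda\mapsto v$ extends to a $\cB_\bc$-homomorphism $M(\lambda)\to V$, necessarily surjective by simplicity of $V$. Since $L(\lambda)=M(\lambda)/N(\lambda)$ is the unique simple quotient of $M(\lambda)$, this yields $V\cong L(\lambda)$.

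For uniqueness I would recover $\lambda$ intrinsically from $L(\lambda)$ as its highest weight. The PBW basis of $M(\lambda)$ in Theorem \ref{thm:PBWMl} consists of monomials $B_{\beta_{N(N-1)}}^{j_{N(N-1)}}\cdots B_{\beta_N}^{j_N}v_\lambda$, and a direct per-generator check using Lemma \ref{lem:root-vectors} and Lemma \ref{lem:Bbetaj2} (together with the weight relations of $B_1$ from Section \ref{sec:gen-rels}) shows that each factor $B_{\beta_j}$ shifts the $\sofrak(2N-1)$-weight by an element of $-\sum_{i=1}^{N-1}\N_0\gamma_i$. Hence every weight $\mu$ of $M(\lambda)$ satisfies $\lambda-\mu\in\sum_{i=1}^{N-1}\N_0\gamma_i$, with one-dimensional top weight space $M(\lambda)_\lambda=\qfield v_\lambda$; both properties descend to $L(\lambda)$, so $\lambda$ is the unique maximal weight of $L(\lambda)$, and an isomorphism $L(\lambda)\cong L(\mu)$ forces $\lambda=\mu$. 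The real work was done in the preparatory results—above all the filtered-graded argument of Proposition \ref{prop:finite}—so the present theorem is a clean synthesis, and the only routine point left open is the per-generator weight check needed for uniqueness.
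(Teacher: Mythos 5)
Your proposal is correct and follows essentially the same route as the paper: finite-dimensionality of $L(\lambda)$ via $\ttN(\lambda)\subseteq N(\lambda)$ together with Proposition \ref{prop:finite}, and the classification via Lemma \ref{lem:hwv}, Corollary \ref{cor:gammaj-dom} and the universal property of $M(\lambda)$. Your explicit weight-shift check for uniqueness is just a spelled-out version of the standard highest-weight argument the paper leaves implicit (the paper already records that $B_{\beta_N},\dots,B_{\beta_{N(N-1)}}$ correspond to negative roots of $\sofrak(2N-1)$), so nothing is missing.
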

\begin{proof}
  The first statement has been explained above. The second statement follows from Lemma \ref{lem:hwv}, Corollary \ref{cor:gammaj-dom} and the universal property of the Verma module $M(\lambda)$.
\end{proof}  
\section{Non-restricted specialisation}\label{sec:specialization}
In this section, unless stated otherwise, we assume that $\qfield=\field(q)$ is the field of rational functions in a variable $q$ where $\field$ is a field of characteristic zero. Let $\bA=\field[q]_{(q-1)}$ denote the localisation of the polynomial ring $\field[q]$ with respect to the prime ideal generated by $q-1$. For any $i\in I$ define
\begin{align*}
    (K_i;0)_q = \frac{K_i-1}{q-1}.
\end{align*}
The $\bA$-form $\Uq_\bA$ of $\Uq$ is the $\bA$-subalgebra of $\Uq$ generated by the elements $E_i, F_i, K_i^{\pm 1}$ and $(K_i;0)_q$ for all $i\in I$. As usual, we consider $\field$ as an $\bA$-module via the map sending $q$ to $1\in \field$. The $\field$-algebra $\Uq_1=\field\ot_\bA \Uq_\bA$ is called the specialisation of $\Uq=\uqg$ at $q=1$. For any $x\in \Uq_\bA$ we denote its image in $\Uq_1$ by $\overline{x}$. It is well known \cite[Proposition 1.5]{a-DCoKa90}, \cite[Theorem 3.4.9]{b-HongKang02} that there is an isomorphism of algebras $\Uq_1\rightarrow U(\gfrak)$ given by $\overline{E_i}\mapsto e_{\alpha_1}$, $\overline{F}_i\mapsto f_{\alpha_i}$, $\overline{(K_i;0)_q}\mapsto h_{\alpha_i}$. Here $\gfrak=\glfrak_S(n)_\field$ is defined over $\field$ as in Remark \ref{rem:JantzenJacobson}.
\subsection{$\bA$-forms of triangular decompositions}
For any subspace $W\subseteq \Uq$ we define $W_\bA=W\cap \Uq_\bA$ and $\overline{W}=\field\ot_\bA W_\bA \subset \Uq_1$. By \cite[Proposition 3.3.3]{b-HongKang02} the multiplication map yields an isomorphism
\begin{align}\label{eq:UA-triang}
  \Uq^-_\bA\ot_\bA U^0_\bA \ot_\bA \Uq^+_\bA \cong \Uq_\bA.
\end{align}
We aim to establish a similar $\bA$-form of the triangular decomposition of $\cB_\bc$ in Corollary \ref{cor:Bc-triang}. To this end, the main ingredient is provided by the following Lemma, which is well known, but we include a proof for the convenience of the reader. Recall the definition of the PBW-monomials $F_\cJ$ for $\cJ=(j_1,\dots,j_{N(N-1)})\in \N_0^{N(N-1)}$ from \eqref{eq:FJ-def}.
\begin{lem}
  The algebra $\Uq^-_\bA$  is a free $\bA$-module with basis $\{F_\cJ\,|\,\cJ\in \N_0^{N(N-1)}\}$.
\end{lem}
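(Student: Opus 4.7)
The plan is to establish two things: $(i)$ every monomial $F_\cJ$ lies in $U^-_\bA$, and $(ii)$ these monomials span $U^-_\bA$ over $\bA$. Linear independence over $\bA$ is automatic since the $F_\cJ$ are already linearly independent over the larger field $\qfield$ by the classical PBW theorem \cite[8.24]{b-Jantzen96}.

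For $(i)$, I would argue that the Lusztig braid automorphisms $T_i$ preserve the integral form $U_\bA$. This is a standard fact (see \cite[8.14]{b-Jantzen96} together with the explicit action of $T_i$ on the generators $E_j, F_j, K_j^{\pm 1}$): on each generator, $T_i$ produces an $\bA$-polynomial in the generators, and hence $T_i(U_\bA)\subseteq U_\bA$. Since $F_{\beta_j}=T_{i_1}\cdots T_{i_{j-1}}(F_{i_j})$ and $F_{i_j}\in U_\bA$, an immediate induction on $j$ gives $F_{\beta_j}\in U_\bA$. As $F_{\beta_j}\in U^-$ by construction, we conclude $F_{\beta_j}\in U^-_\bA$, and consequently $F_\cJ\in U^-_\bA$ for every $\cJ$.

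For $(ii)$, let $M=\sum_\cJ \bA\cdot F_\cJ\subseteq U^-_\bA$. The key input is the Levendorskii-Soibelman straightening formula: for indices $k>\ell$,
\begin{align*}
F_{\beta_k}F_{\beta_\ell}-q^{-(\beta_k,\beta_\ell)}F_{\beta_\ell}F_{\beta_k}=\sum_{\cJ}c_\cJ\,F_\cJ,
\end{align*}
where the sum runs over multi-indices $\cJ$ supported strictly between $\ell$ and $k$ and the structure constants $c_\cJ$ lie in $\Z[q,q^{-1}]\subseteq\bA$; see e.g.\ \cite[Proposition 5.5.2]{b-DCP93} or the discussion in \cite[Chapter 8]{b-Jantzen96}. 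Using this identity, any (unordered) product of root vectors can be rewritten as an $\bA$-linear combination of ordered PBW monomials by induction on length and on the number of inversions in the ordering. In particular $M$ is closed under multiplication, and it clearly contains each simple-root vector $F_i$ (viewed as some $F_{\beta_j}$). Hence $M$ contains the $\bA$-subalgebra of $U^-_\bA$ generated by $F_1,\dots,F_N$, which is all of $U^-_\bA$ (since the defining quantum Serre relations have coefficients in $\Z[q,q^{-1}]\subseteq\bA$). Therefore $M=U^-_\bA$.

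The main technical obstacle is verifying the Levendorskii-Soibelman formula with coefficients in $\bA$ (rather than in $\qfield$). For type $D$ this is standard, but it is the only nontrivial input; everything else is bookkeeping. An alternative route, which bypasses the explicit straightening formula, is to invoke directly \cite[Corollary 41.1.4]{b-Lusztig94} or the De Concini-Kac integral PBW theorem, from which the statement is immediate after base change from $\Z[q,q^{-1}]$ to $\bA$.
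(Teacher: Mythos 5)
Your proposal is correct and follows essentially the same route as the paper's proof: the $T_i$ preserve $\Uq_\bA$ so the PBW monomials lie in $\Uq^-_\bA$, and the Levendorskii--Soibelman straightening relation with coefficients in $\bA$ (the paper cites \cite[Theorem 9.3]{a-DCPro93}) shows the $\bA$-span of the $F_\cJ$ is closed under multiplication and hence equals the $\bA$-subalgebra generated by the $F_i$, which is $\Uq^-_\bA$. Only a minor remark: the identification of $U^-\cap \Uq_\bA$ with the $\bA$-subalgebra generated by the $F_i$ is better justified by the $\bA$-form triangular decomposition \eqref{eq:UA-triang} than by the Serre relations having coefficients in $\bA$, but this is the same input the paper uses implicitly.
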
  
\begin{proof}
  Let $V^-_\bA=\oplus_{\cJ\in\N_0^{N(N-1)}} \bA F_\cJ$ denote the free $\bA$-submodule of $U^-$ with basis $\{F_\cJ\,|\,\cJ\in \N_0^{N(N-1)}\}$. As the Lusztig operators $T_i$ for $i\in I$ leave $\Uq_\bA$ invariant, we have $V^-_\bA\subseteq \Uq^-_\bA$. Conversely, $\Uq^-_\bA=U^-\cap \Uq_\bA$ is the $\bA$-subalgebra of $\Uq_\bA$ generated by all $F_i$ for $i\in I$. As $F_i\in V_\bA^-$ it remains to show that $V^-_\bA$ is closed under multiplication. Levedorskii and Soibelman showed that for all $1\le i<j\le N(N-1)$ we have
  \begin{align*}
     F_{\beta_i} F_{\beta_j} - q^{(\beta_i,\beta_j)}F_{\beta_j} F_{\beta_i} = \sum_{\cJ\in \N_0^{N(N-1)}} \bz_\cJ F_\cJ
  \end{align*}
  where $\bz_\cJ\in \Q[q,q^{-1}]$ and $\bz_\cJ=0$ unless $j_r=0$ for $r\le i$ and $r\ge j$; see \cite[Theorem 9.3]{a-DCPro93} for a proof and \cite[I.6.10]{b-BG02} for a discussion of the literature. An inductive argument using Levendorskii and Soibelman's result shows that $F_\cJ F_{\cJ'}\in V^-_\bA$ for all $\cJ,\cJ'\in \N_0^{N(N-1)}$.
\end{proof}
Recall the definition of the monomials $B_\cJ$ for $\cJ\in \N_0^{N(N-1)}$ from \eqref{eq:BJ-def}. Define
\begin{align}
  \cB_{\bc,{\rt}}=\bigoplus_{\cJ\in \N_0^{N(N-1)}} \qfield B_\cJ.
\end{align}
By Proposition \ref{prop:Bc-basis} the multiplication map $\cB_{\bc,\rt}\ot U^0_X \ot \cM_X^+\rightarrow \cB_\bc$ is a $\qfield$-linear isomorphism. The following Lemma is a version of \cite[Theorem 10.7.(4)]{a-Kolb14} where we replaced monomials in the generators $B_i$ by monomials in the root vectors $B_{\beta_i}$.
\begin{thm}\label{thm:Bc-triang-A}
  Let $c_1\in \bA\setminus\{0\}$. The multiplication map
  \begin{align}\label{eq:A-triang-Bc}
     (\cB_{\bc,\rt})_\bA \ot_\bA (U^0_X)_\bA \ot_\bA (\cM_X^+)_\bA \rightarrow (\cB_\bc)_\bA
  \end{align}
  is an isomorphism of $\bA$-modules. Moreover, $\displaystyle (\cB_{\bc,\rt})_\bA=\bigoplus_{\cJ\in \N_0^{N(N-1)}} \bA B_{\cJ}$.
\end{thm}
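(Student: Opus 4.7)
The strategy is to upgrade Proposition \ref{prop:Bc-basis} and Theorem \ref{thm:PBW-Bc} to the $\bA$-integral setting via a filtered-graded argument built on the standard filtration $\cF_\ast$ of Section \ref{sec:standardFilt}. The filtration restricts to $(\cB_\bc)_\bA$ by $\cF_n((\cB_\bc)_\bA):=\cF_n(\cB_\bc)\cap (\cB_\bc)_\bA$, and the goal is to show that modulo lower order terms the monomials $B_\cJ K_\beta E_\cI$ provide a free $\bA$-basis of the associated graded.

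First I would verify that every monomial $B_\cJ K_\beta E_\cI$ lies in $(\cB_\bc)_\bA$. Since $c_1\in \bA$ by hypothesis and both $F_1$ and $T_{w_X}(E_1)K_1^{-1}$ lie in $\Uq_\bA$ (the Lusztig braid operators preserve the integral form; see \cite[Section 8]{b-Jantzen96}), the generator $B_1$ lies in $(\cB_\bc)_\bA$. Applying the $T_i^{\pm 1}$ for $i\in X$ yields $B_{\beta_j}\in (\cB_\bc)_\bA$ for all $j$ via \eqref{eq:Bbetaj}, and products of such elements with elements of $(U^0_X)_\bA$ and $(\cM_X^+)_\bA$ remain in $(\cB_\bc)_\bA$.

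Next, combining the preceding Lemma with the $\bA$-form triangular decomposition \eqref{eq:UA-triang} and the analogous $\bA$-form decomposition of $\cM_X$ gives an $\bA$-form $\cA_\bA\subset \cA$ which is $\bA$-free on the basis $\{F_\cJ K_\beta E_\cI\}$. I would then show that the algebra isomorphism $\varphi:\cA\rightarrow \gr(\cB_\bc)$ of Section \ref{sec:standardFilt} restricts to an isomorphism $\varphi_\bA:\cA_\bA\rightarrow \gr((\cB_\bc)_\bA)$ of graded $\bA$-modules. The inclusion $\varphi_\bA(\cA_\bA)\subseteq \gr((\cB_\bc)_\bA)$ follows from the first step together with Lemma \ref{lem:grT} and \eqref{eq:varphiFJ}, while injectivity of $\varphi_\bA$ is inherited from that of $\varphi$. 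For surjectivity, given $x\in \cF_n((\cB_\bc)_\bA)$ write its image in $\gr_n(\cB_\bc)$ as a $\qfield$-combination of the $\varphi(F_\cJ K_\beta E_\cI)$ of filtration degree exactly $n$; the point is that these coefficients actually lie in $\bA$, which reduces through \eqref{eq:UA-triang} applied to $x$ viewed in $\Uq_\bA$ to integrality of the expansion of $B_\cJ$ along the PBW basis $\{F_\cJ K_\beta E_\cI\}$ of $\cA$, and this integrality follows inductively from the recursive definition \eqref{eq:Bbetaj} and the integrality of the braid operators.

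With $\varphi_\bA$ established as an isomorphism, a standard filtered-graded lifting argument completes the proof: for $x\in \cF_n((\cB_\bc)_\bA)$ we subtract the preimage under $\varphi_\bA$ of its symbol, landing in $\cF_{n-1}((\cB_\bc)_\bA)$, and recurse on $n$. This simultaneously yields the $\bA$-form triangular decomposition \eqref{eq:A-triang-Bc} and identifies $(\cB_{\bc,\rt})_\bA=\bigoplus_\cJ \bA B_\cJ$. The main obstacle is the integrality statement in the surjectivity of $\varphi_\bA$, i.e.\ ensuring no denominators appear when rewriting monomials in the $B_{\beta_j}$ modulo the lower filtered piece; this is the only place where one must carefully track coefficients beyond what is already provided by Proposition \ref{prop:Bc-basis} and the integrality of the Lusztig action on $\Uq_\bA$.
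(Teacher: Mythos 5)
Your overall strategy (compare the expansion of an element of $(\cB_\bc)_\bA$ in the $B_\cJ$-monomials with its integral PBW expansion via \eqref{eq:UA-triang}, and remove the top part by a downward recursion) is the same kind of leading-term argument the paper uses; the paper inducts on the height $|\cJ|$ of $\sum_i j_i\beta_i$ rather than on the standard filtration, but that difference is cosmetic. However, as written your argument has a concrete error at its central point. You define $\cA_\bA$ to be the free $\bA$-module on the monomials $F_\cJ K_\beta E_\cI$ and claim that the symbol of any $x\in\cF_n((\cB_\bc)_\bA)$ is an $\bA$-linear combination of the $\varphi(F_\cJ K_\beta E_\cI)$. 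This is false: $(\cB_\bc)_\bA$ contains the elements $(K_i;0)_q=\frac{K_i-1}{q-1}$ for $i\in X$, whose expansion in the monomials $K_\beta$ has coefficients $\pm\frac{1}{q-1}\notin\bA$, so your $\varphi_\bA$ is not surjective onto $\gr((\cB_\bc)_\bA)$ and the recursion cannot get started. This is exactly why the theorem is formulated with the tensor factor $(U^0_X)_\bA$ and only asserts $\bA$-freeness for $(\cB_{\bc,\rt})_\bA$: the correct target is $\cA_\bA:=\cA\cap\Uq_\bA$, which by \eqref{eq:UA-triang} is free as a right $(U^0_X)_\bA(\cM_X^+)_\bA$-module on the $F_\cJ$, i.e.\ the coefficients $a_\cJ$ must be allowed to lie in $(U^0_X\cM_X^+)_\bA$, not in $\bA$.

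Even after this repair, the key integrality step is not justified by what you cite. Knowing that each $B_\cJ$ lies in $\Uq_\bA$ (integrality of the braid operators and $c_1\in\bA$) gives integrality of the \emph{forward} transition from $B$-monomials to the PBW basis of $\Uq$; it does not by itself imply that the coefficients $a_\cJ$ in $x=\sum_\cJ B_\cJ a_\cJ$ of an integral element $x$ are integral, which is what surjectivity of $\varphi_\bA$ requires. The missing ingredient is a unitriangularity statement with respect to a degree compatible with your filtration: since $B_1-F_1=-c_1T_{w_X}(E_1)K_1^{-1}\in(\Uq^0\Uq^+)_\bA$, every PBW term of $B_\cJ K_\beta E_\cI$ other than $F_\cJ K_\beta E_\cI$ has a $U^-$-part whose $\alpha_1$-degree (equivalently, whose height in the paper's version) is strictly smaller, so the top-degree part of the PBW expansion of $x$ is exactly $\sum F_\cJ a_\cJ$ over the top $\cJ$'s; combined with \eqref{eq:UA-triang} and the $\bA$-freeness of $U^-_\bA$ on the $F_\cJ$, this identifies those $a_\cJ$ as elements of $(\Uq^0\Uq^+)_\bA\cap U^0_X\cM_X^+=(U^0_X\cM_X^+)_\bA$, after which one subtracts $\sum B_\cJ a_\cJ\in(\cB_\bc)_\bA$ and recurses (this is precisely the paper's argument, with the observation ``$a_\cJ=b_\cJ$ for $|\cJ|=m$''). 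Your proposal asserts the conclusion of this comparison but never establishes the leading-term property, which is the only nontrivial point of the proof; also note that over $\qfield$ the reduction to surjectivity is handled in the paper by Proposition \ref{prop:Bc-basis} together with \cite[Lemma 10.6]{a-Kolb14}, which you would still need (or an equivalent flatness argument) to conclude that the surjection is an isomorphism.
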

\begin{proof}
  By Proposition \ref{prop:Bc-basis} and \cite[Lemma 10.6]{a-Kolb14}, to prove the first statement, it suffices to show that the map \eqref{eq:A-triang-Bc} is surjective. Let $b\in \cB_\bc$. By Proposition \ref{prop:Bc-basis} we have $b=\sum_{\cJ\in \N_0^{N(N-1)}} B_\cJ a_\cJ$ for some $a_\cJ\in U_X^0\cM_X^+$. For any $\mu=\sum_{i\in I}n_i\alpha_i\in Q^+$ define $|\mu|=\sum_{i\in I}n_i$ and for $\cJ=(j_1,j_2,\dots,j_{N(N-1)})\in \N_0^{N(N-1)}$ define $|\cJ|=|\sum_{i=1}^{N(N-1)}j_i\beta_i|$. Let $m\in \N_0$ be maximal such that $a_\cJ\neq 0$ for some $\cJ\in \N_0^{N(N-1)}$ with $|\cJ|=m$. By the triangular decomposition \eqref{eq:UA-triang} we can write
  \begin{align*}
     b=\sum_{\cJ\in \N_0^{N(N-1)}, |\cJ|\le m} F_\cJ b_\cJ
  \end{align*}
  for some $b_\cJ\in (\Uq^0 \Uq^+)_\bA$. As $B_1-F_1\in (\Uq^0 \Uq^+)_\bA$ one obtains $a_\cJ=b_\cJ$ if $|\cJ|=m$. Hence $a_\cJ\in (\Uq^0_X\cM_X^+)_\bA$ for all $\cJ\in\N_0^{N(N-1)}$ with $|\cJ|=m$.  By induction on $m$ one obtains $a_\cJ\in (\Uq^0_X\cM_X^+)_\bA$ for all $\cJ\in\N_0^{N(N-1)}$.
  The second statement of the proposition also follows from the above argument.
\end{proof}
Theorem \ref{thm:Bc-triang-A} can be reformulated in terms of the triangular decomposition of $\cB_\bc$ from Corollary \ref{cor:Bc-triang}.
\begin{cor}
  Assume that $c_1\in \bA$. The multiplication map
  \begin{align} \label{eq:Bc-triang-A}
      (\cB_\bc^-)_\bA \ot_\bA (\Uq^0_X)_\bA \ot_\bA (\cB_\bc^+)_\bA \rightarrow (\cB_\bc)_\bA
  \end{align}
  is an isomorphism of $\bA$-modules.
\end{cor}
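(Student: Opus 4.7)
The plan is to deduce this corollary from Theorem~\ref{thm:Bc-triang-A} by regrouping PBW factors. Given $\cJ=(j_1,\dots,j_{N(N-1)})\in \N_0^{N(N-1)}$, split it as $\cJ=(\cJ^+,\cJ^-)$ with $\cJ^+=(j_1,\dots,j_{N-1})$ and $\cJ^-=(j_N,\dots,j_{N(N-1)})$, and set
\begin{align*}
  B_{\cJ^-} := B_{\beta_{N(N-1)}}^{j_{N(N-1)}}\cdots B_{\beta_N}^{j_N} \in \cB_\bc^-, \qquad B_{\cJ^+} := B_{\beta_{N-1}}^{j_{N-1}}\cdots B_{\beta_1}^{j_1},
\end{align*}
so that $B_{\cJ^+} E_\cI \in \cB_\bc^+$ and $B_\cJ = B_{\cJ^-} B_{\cJ^+}$.

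Using this factorisation together with Theorem~\ref{thm:Bc-triang-A}, I would first identify the $\bA$-forms $(\cB_\bc^{\pm})_\bA := \cB_\bc^{\pm} \cap (\cB_\bc)_\bA$ as the free $\bA$-modules
\begin{align*}
  (\cB_\bc^-)_\bA = \bigoplus_{\cJ^-} \bA\, B_{\cJ^-}, \qquad (\cB_\bc^+)_\bA = \bigoplus_{\cJ^+, \cI} \bA\, B_{\cJ^+} E_\cI.
\end{align*}
The containments $\supset$ are immediate. The reverse inclusions follow because $\{B_{\cJ^-}\}$ (resp.\ $\{B_{\cJ^+} E_\cI\}$) is a subset of the natural $\bA$-basis of $(\cB_{\bc,\rt})_\bA$ (resp.\ $(\cB_{\bc,\rt})_\bA \ot_\bA (\cM_X^+)_\bA$) from Theorem~\ref{thm:Bc-triang-A}, together with $1 \in (\Uq^0_X)_\bA$ being part of an $\bA$-basis; the $\qfield$-expansion of any element of $\cB_\bc^{\pm} \cap (\cB_\bc)_\bA$ in the PBW-basis of Theorem~\ref{thm:Bc-triang-A} must then have $\bA$-coefficients concentrated on the prescribed indices.

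Finally I would evaluate the multiplication map on generators. Since $B_{\cJ^+}$ is a $\Uq^0$-weight vector of some weight $\mu = \mu(\cJ^+)$, there is a $\qfield$-algebra automorphism $\phi_\mu$ of $\Uq^0_X$ characterised by $\phi_\mu(K_h) = q^{\mu(h)} K_h$ such that $k B_{\cJ^+} = B_{\cJ^+} \phi_\mu(k)$ for all $k \in \Uq^0_X$. Since $q$ is a unit in $\bA = \field[q]_{(q-1)}$ (as $q \notin (q-1)$), a direct check on the generators $K_i^{\pm 1}$ and $(K_i;0)_q$ shows that $\phi_\mu$ preserves $(\Uq^0_X)_\bA$ and hence restricts to an $\bA$-linear automorphism. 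Consequently the multiplication map \eqref{eq:Bc-triang-A} sends $B_{\cJ^-}\ot k \ot B_{\cJ^+} E_\cI \mapsto B_\cJ\,\phi_\mu(k)\,E_\cI$, which under the $\bA$-iso of Theorem~\ref{thm:Bc-triang-A} corresponds to $B_\cJ \ot \phi_\mu(k) \ot E_\cI$. This yields an $\bA$-linear isomorphism $(\cB_\bc^-)_\bA \ot_\bA (\Uq^0_X)_\bA \ot_\bA (\cB_\bc^+)_\bA \xrightarrow{\sim} (\cB_{\bc,\rt})_\bA \ot_\bA (\Uq^0_X)_\bA \ot_\bA (\cM_X^+)_\bA$, with inverse $B_\cJ \ot k \ot E_\cI \mapsto B_{\cJ^-} \ot \phi_\mu^{-1}(k) \ot B_{\cJ^+} E_\cI$. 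Composing with Theorem~\ref{thm:Bc-triang-A} gives the claim; no serious obstacle arises beyond checking that $\phi_\mu$ restricts to an $\bA$-automorphism of $(\Uq^0_X)_\bA$.
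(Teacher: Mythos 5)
Your proposal is correct and matches the paper's intent: the paper offers no separate proof, presenting the corollary as an immediate reformulation of Theorem \ref{thm:Bc-triang-A}, and your argument is exactly that reformulation carried out — regrouping the PBW monomials $B_\cJ=B_{\cJ^-}B_{\cJ^+}$, identifying $(\cB_\bc^\pm)_\bA$ as the free $\bA$-modules on the corresponding monomials, and checking that commuting $(\Uq^0_X)_\bA$ across the weight vectors $B_{\cJ^+}$ stays in the $\bA$-form (the point that $q$ and the elements $(q^m-1)/(q-1)$ lie in $\bA$). No gaps beyond standard facts about $\bA$-forms that the paper already invokes.
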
  
\subsection{Specialisation of $\cB_\bc$}\label{sec:Bc-special}
Recall from Equation \eqref{eq:TwXE1} that 
\begin{align*}
  T_{w_X}&(E_1)=T_2\mydots T_{N-2} T_{N-1} T_{N} T_{N-2} \dots T_2(E_1)\\
            &=\big[[E_2,\dots[E_{N-2},E_N]_{q^{-1}}\mydots]_{q^{-1}},[E_{N-1},[E_{N-2}, \mydots[E_2,E_1]_{q^{-1}}\mydots]_{q^{-1}}]_{q^{-1}} \big]_{q^{-1}}.
\end{align*}  
On the other hand Equation \eqref{eq:ms2} gives us
\begin{align*}
  \theta(f_1)=(-1)^{N+1}\big[[e_2,[e_3,\mydots[e_{N-2},e_N]\mydots]],[e_{N-1},[e_{N-2},\mydots[e_2,e_1]\mydots]]\big].
\end{align*}
Hence, the element $B_1=F_1-c_1 T_{w_X}(E_1)K_1^{-1}$ for $c_1\in \cA$ satisfies $\overline{B_1}=f_1+\theta(f_1)$ if and only if $c_1(1)=(-1)^N$. We call an element $c\in \qfield=\field(q)$ specialisable if $c\in \bA$ and $c(1)=(-1)^N$.
\begin{rema}
  In \cite[Section 10.1]{a-Kolb14} we called $c_1\in \bA$ specialisable if $c_1(1)=1$. However, in that paper the definition of the generators $B_i$ involved an additional coefficient $s(\alpha_{\tau(i)})$ which can be chosen to be in $\{\pm 1\}$, see \cite[Remark 3.1]{a-BalaKolb15}, and which we have subsumed in the coefficient $c_1$ in the present paper. 
\end{rema}  
In the following we let $\kfrak=\sofrak(2N)^\theta\cong \sofrak(2N-1)$ be the subalgebra of $\gfrak=\sofrak(2N)$ considered in Section \ref{sec:symPair}. Following Remark \ref{rem:JantzenJacobson} we consider $\gfrak$ and $\kfrak$ defined over the field $\field$.
\begin{thm}[\upshape{\cite[Theorem 4.8]{a-Letzter99a}, see also \cite[Theorem 10.8]{a-Kolb14}}]
If $c_1\in \bA$ is specialisable then $\overline{\cB_\bc}=U(\kfrak)$.   
\end{thm}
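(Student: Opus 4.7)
The plan is to show the two inclusions $\overline{\cB_\bc}\subseteq U(\kfrak)$ and $U(\kfrak)\subseteq \overline{\cB_\bc}$ separately, by comparing generators. Throughout, I identify $U(\kfrak)$ with its image $U(\eta(\kfrak))\subseteq U(\gfrak)=\overline{\Uq}$ via the Lie algebra embedding $\eta:\kfrak\hookrightarrow \gfrak$ from Lemma \ref{lem:eta}. The groundwork is already in place: Section \ref{sec:symPair} provides the classical expressions for the Chevalley generators of $\kfrak$ inside $\gfrak$, and the $\bA$-form triangular decomposition \eqref{eq:Bc-triang-A} tells us that $\overline{\cB_\bc}$ inherits a sensible structure under specialisation.

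For $\overline{\cB_\bc}\subseteq U(\kfrak)$, it suffices to check this on generators. The generators $E_i,F_i,K_i^{\pm 1}$ of $\cM_X$ for $i\in X$ specialise to $e_{\alpha_i},f_{\alpha_i},1$, which lie in $U(\gfrak_X)$; by Equations \eqref{eq:etaxi}, \eqref{eq:etaxN-1} and \eqref{eq:etaxi2} we have $\gfrak_X\subseteq \eta(\kfrak)$, so $\overline{\cM_X}\subseteq U(\eta(\kfrak))$. The remaining generator $B_1=F_1-c_1 T_{w_X}(E_1)K_1^{-1}$ specialises to
\begin{align*}
  \overline{B_1}=f_{\alpha_1}-c_1(1)\,\overline{T_{w_X}(E_1)},
\end{align*}
since $\overline{K_1^{-1}}=1$. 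The key observation is that the $q^{-1}$-commutator expression \eqref{eq:TwXE1} for $T_{w_X}(E_1)$ specialises to the corresponding ordinary iterated commutator in $U(\gfrak)$, which up to sign is exactly the expression \eqref{eq:ms2} for $\theta(f_{\alpha_1})$. Matching signs then shows that the specialisability condition $c_1(1)=(-1)^N$ forces $\overline{B_1}=f_{\alpha_1}+\theta(f_{\alpha_1})=\eta(b_1)\in \eta(\kfrak)$ by \eqref{eq:f1+tf1}.

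For $U(\kfrak)\subseteq \overline{\cB_\bc}$, I would exhibit each Chevalley generator of $\eta(\kfrak)$ as the specialisation of an element of $(\cB_\bc)_\bA$. The generators $\eta(e_{\gamma_i}),\eta(f_{\gamma_i}),\eta(h_{\gamma_i})$ for $i=1,\dots,N-2$ are specialisations of $E_{i+1},F_{i+1},(K_{i+1};0)_q$ by \eqref{eq:etaxi}, and together with \eqref{eq:etaxN-1}, \eqref{eq:etaxi2} all Chevalley generators indexed by long simple roots of $\sofrak(2N-1)$ are accounted for inside $\overline{\cM_X}$. For the short simple root $\gamma_{N-1}$, Equations \eqref{eq:eN-1} and \eqref{eq:fN-1} express $\eta(e_{\gamma_{N-1}})$ and $\eta(f_{\gamma_{N-1}})$ as iterated commutators of $f_{\alpha_{N-1}},\dots,f_{\alpha_2}$ (respectively $f_{\alpha_N},f_{\alpha_{N-2}},\dots,f_{\alpha_2}$) applied to $f_{\alpha_1}+\theta(f_{\alpha_1})=\overline{B_1}$. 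Comparing with the $q$-commutator formulas in Lemma \ref{lem:Bbetaj2}, I see that these are precisely the specialisations of the root vectors $B_{\beta_{N-1}}$ and $-B_{\beta_N}$ of $\cB_\bc$ (this is the precise form of the informal identification in Remark \ref{rem:q=1}). Hence all Chevalley generators of $\eta(\kfrak)$ lie in $\overline{\cB_\bc}$, and the inclusion follows.

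The only genuinely delicate point is the bookkeeping of signs and $q^{-1}$-versus-$q$ conventions in verifying $\overline{B_1}=\eta(b_1)$, which is exactly what the specialisability condition $c_1(1)=(-1)^N$ is engineered to fix; everything else is a translation of the classical identifications from Section \ref{sec:symPair} into the statement that the $q$-commutators defining the root vectors of $\cB_\bc$ reduce to their classical counterparts at $q=1$.
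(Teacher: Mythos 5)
Your second inclusion $U(\kfrak)\subseteq \overline{\cB_\bc}$ is fine: $(\cB_\bc)_\bA=\cB_\bc\cap\Uq_\bA$ is an $\bA$-subalgebra, so $\overline{\cB_\bc}$ is a subalgebra of $U(\gfrak)$, and exhibiting all Chevalley generators of $\eta(\kfrak)$ as specialisations of elements of $(\cB_\bc)_\bA$ (via \eqref{eq:etaxi}, \eqref{eq:etaxi2}, \eqref{eq:eN-1}, \eqref{eq:fN-1} and Lemma \ref{lem:Bbetaj2}, i.e.\ the content of Proposition \ref{prop:root-spec}) does give this direction. The sign computation showing that $c_1(1)=(-1)^N$ forces $\overline{B_1}=f_{\alpha_1}+\theta(f_{\alpha_1})$ is also exactly the computation the paper records just before the theorem.

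The gap is in the first inclusion. You write that for $\overline{\cB_\bc}\subseteq U(\kfrak)$ ``it suffices to check this on generators'', but $\overline{\cB_\bc}$ is by definition $\field\ot_\bA(\cB_\bc\cap\Uq_\bA)$, and the intersection $\cB_\bc\cap\Uq_\bA$ is \emph{not} known a priori to coincide with the $\bA$-subalgebra of $\Uq_\bA$ generated by $B_1$ and the standard generators of $(\cM_X)_\bA$. An element of $\cB_\bc$ of the form $(q-1)^{-1}(x-y)$, with $x,y$ in that obvious $\bA$-subalgebra and $x\equiv y \bmod (q-1)$, lies in $(\cB_\bc)_\bA$ but its image at $q=1$ is not controlled by the images of the generators; this is precisely why the statement is a genuine theorem in \cite{a-Letzter99a}, \cite{a-Kolb14} rather than a generator check. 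The argument that is actually needed (and is the one behind \cite[Theorem 10.8]{a-Kolb14}) uses the $\bA$-form of the triangular decomposition, Theorem \ref{thm:Bc-triang-A}: every element of $(\cB_\bc)_\bA$ is an $\bA$-linear combination of products $B_\cJ\,u\,m$ with $u\in(U^0_X)_\bA$ and $m\in(\cM_X^+)_\bA$, and only then does one specialise factorwise, using $\overline{B_{\beta_j}}\in\eta(\kfrak)$ (Remark \ref{rem:q=1}/Proposition \ref{prop:root-spec}, or $B_{\beta_j}=F_{\beta_j}\in\cM_X^-$ for $j>2(N-1)$) together with the standard facts that $(U^0_X)_\bA$ and $(\cM_X^+)_\bA$ specialise into $U(\gfrak_X)\subseteq U(\kfrak)$. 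You cite \eqref{eq:Bc-triang-A} only as giving ``a sensible structure'', but it is the essential input for the hard inclusion, and without invoking it (or an equivalent filtration/PBW control of $\cB_\bc\cap\Uq_\bA$) your proof of $\overline{\cB_\bc}\subseteq U(\kfrak)$ does not go through.
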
  
We will now discuss the specialisation of the root vectors $B_{\beta_j}$ for $1\le j\le 2N-2$. Recall the definition \eqref{eq:bi-def} of the elements $b_\ell, d_\ell\in \sofrak(2N-1)$ for $\ell=1,\dots,N-1$.
\begin{prop}\label{prop:root-spec}
  Let $c_1\in \bA$ be specialisable and $1\le \ell\le N-1$. Then we have $B_{\beta_\ell}, B_{\beta_{N-1+\ell}}\in (\cB_\bc)_\bA$ and
  \begin{align}
    \overline{B_{\beta_\ell}}&=\eta(b_\ell),\nonumber\\
    \overline{B_{\beta_{N-1+\ell}}}&=(-1)^{\ell}2\eta(d_{N-\ell}). \label{eq:oBbeta4}
  \end{align}  
\end{prop}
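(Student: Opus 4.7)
For the integrality claim, I would first note that $B_1=F_1-c_1T_{w_X}(E_1)K_1^{-1}$ lies in $(\cB_\bc)_\bA$ since $c_1\in \bA$ and $T_{w_X}$ preserves $\Uq_\bA$. By \eqref{eq:TiB} and \eqref{eq:Ti-B} the Lusztig automorphisms $T_i^{\pm 1}$ for $i\in X$ preserve both $\cB_\bc$ and $\Uq_\bA$, hence $(\cB_\bc)_\bA$; the integrality of $B_{\beta_\ell}$ and $B_{\beta_{N-1+\ell}}$ then follows directly from the definition \eqref{eq:Bbetaj}.

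For the specialisation formulas, the plan is to specialise the iterated $q$-commutator expressions from Lemma \ref{lem:Bbetaj2}. At $q=1$ each $[x,y]_q$ becomes the ordinary bracket $[\bar x,\bar y]$ and $\overline{F_i}=f_{\alpha_i}$. I would handle the positive case ($1\le \ell\le N-1$) by induction on $\ell$. The base case $\overline{B_{\beta_1}}=f_{\alpha_1}+\theta(f_{\alpha_1})=\eta(b_1)$ follows from \eqref{eq:f1+tf1} combined with the specialisability hypothesis $c_1(1)=(-1)^N$ (compare \eqref{eq:TwXE1} with \eqref{eq:ms2}). For the inductive step, Lemma \ref{lem:Bbetaj2} expresses $B_{\beta_\ell}=[F_\ell,B_{\beta_{\ell-1}}]_q$; specialising and invoking $f_{\alpha_\ell}=\eta(f_{\gamma_{\ell-1}})$ from \eqref{eq:etaxi} together with the Lie homomorphism property of $\eta$ and \eqref{eq:2Nin2N1} yields $\overline{B_{\beta_\ell}}=\eta([f_{\gamma_{\ell-1}},b_{\ell-1}])=\eta(b_\ell)$.

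For the negative case, the base $\ell=1$ is direct: Lemma \ref{lem:Bbetaj2} identifies $B_{\beta_N}$ as $[F_N,B_{\beta_{N-2}}]_q$, which specialises to $[f_{\alpha_N},\eta(b_{N-2})]=-\eta(f_{\gamma_{N-1}})=-2\eta(d_{N-1})$ by \eqref{eq:fN-1}, matching $(-1)^1\cdot 2\eta(d_{N-1})$. For $\ell\ge 2$ I would again induct. The case $\ell=2$ illustrates the pattern: expanding $\overline{B_{\beta_{N+1}}}=[f_{\alpha_N},[f_{\alpha_{N-1}},\eta(b_{N-2})]]$ by the Jacobi identity and using $[f_{\alpha_N},f_{\alpha_{N-1}}]=0$ (no edge between nodes $N-1$ and $N$ in type $D_N$) together with the $\ell=1$ result gives $\overline{B_{\beta_{N+1}}}=[f_{\alpha_{N-1}},-2\eta(d_{N-1})]=-2\eta([f_{\gamma_{N-2}},d_{N-1}])=2\eta(d_{N-2})$, again via \eqref{eq:etaxi} and \eqref{eq:2Nin2N1}.

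The expected obstacle is the general step $\ell\ge 3$. Lemma \ref{lem:Bbetaj2} then involves $T_N\cdots T_{N+2-\ell}(F_{N+1-\ell})$, which does not obviously lie in $\eta(\sofrak(2N-1))$. The cleanest route is to exploit instead the iterative relation $B_{\beta_{N-1+\ell}}=T^{-1}_{N+1-\ell}(B_{\beta_{N-2+\ell}})$ read off from \eqref{eq:Bbetaj}: at $q=1$, the automorphism $T^{-1}_{N+1-\ell}$ specialises to an automorphism of $U(\gfrak)$ that, since $N+1-\ell\in X$, commutes with $\theta$ and hence preserves the subalgebra $\eta(U(\sofrak(2N-1)))$, restricting there to the lift of the Weyl reflection $s_{\gamma_{N-\ell}}$ of $\sofrak(2N-1)$. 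Since $\{d_{N-\ell+1},d_{N-\ell}\}$ spans a two-dimensional module for the $\slfrak(2)$-triple associated to $\gamma_{N-\ell}$, with $d_{N-\ell+1}$ the highest weight vector satisfying $[f_{\gamma_{N-\ell}},d_{N-\ell+1}]=-d_{N-\ell}$ by \eqref{eq:2Nin2N1}, this reflection sends $d_{N-\ell+1}$ to $-d_{N-\ell}$, producing the extra minus sign that upgrades the inductive hypothesis from $(-1)^{\ell-1}$ to $(-1)^\ell$.
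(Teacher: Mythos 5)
Your argument is correct, but for the hard part it takes a genuinely different route from the paper. The integrality claim and the cases $\overline{B_{\beta_\ell}}=\eta(b_\ell)$, $\overline{B_{\beta_N}}=-2\eta(d_{N-1})$, $\overline{B_{\beta_{N+1}}}=2\eta(d_{N-2})$ proceed exactly as in the paper (which quotes Remark \ref{rem:q=1} and then treats $B_{\beta_{N+1}}$ by hand). For the remaining negative root vectors the paper specialises the last case of Lemma \ref{lem:Bbetaj2} directly: it computes the classical limit of $T_N\cdots T_{k+1}(F_k)$ as an explicit matrix in $\sofrak(2N)$ and then evaluates the bracket with $\eta(b_{N-2})$ entry by entry, reading off $(-1)^{N-k+1}2\eta(d_{k-1})$. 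You instead exploit the recursion $B_{\beta_{N-1+\ell}}=T^{-1}_{N+1-\ell}(B_{\beta_{N-2+\ell}})$ from \eqref{eq:Bbetaj}, the fact that $T_i^{\pm1}$ preserves $\Uq_\bA$ so that specialisation commutes with the braid automorphisms, and the observation that for $i\in X$ the specialised automorphism commutes with $\theta$ and hence acts on $\kfrak$ through a lift of $s_{\gamma_{i-1}}$, which you then evaluate on the two-dimensional $\slfrak(2)$-submodule spanned by $d_{N-\ell+1},d_{N-\ell}$. This is a clean, convention-light induction that avoids all matrix computations; what the paper's route buys in exchange is that every sign is verified by an explicit calculation in $\glfrak(2N)$.

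The one point you should nail down is the sign in your inductive step. Saying that the specialisation of $T^{-1}_{N+1-\ell}$ restricts to ``the lift'' of $s_{\gamma_{N-\ell}}$ does not by itself determine whether $d_{N-\ell+1}$ goes to $+d_{N-\ell}$ or $-d_{N-\ell}$: the two natural lifts $\exp(\ad e)\exp(-\ad f)\exp(\ad e)$ and its inverse act on that two-dimensional module with opposite signs, so the module relations $[f_{\gamma_{N-\ell}},d_{N-\ell+1}]=-d_{N-\ell}$, $[e_{\gamma_{N-\ell}},d_{N-\ell}]=-d_{N-\ell+1}$ alone do not suffice. In the conventions of \cite[8.14]{b-Jantzen96} used here (so that \eqref{eq:ij-ji} holds and $T_i(E_i)=-F_iK_i$), the classical limit of $T_i$ is $\exp(\ad e_{\alpha_i})\exp(-\ad f_{\alpha_i})\exp(\ad e_{\alpha_i})$, and a short rank-one computation then shows that the limit of $T_i^{-1}$ indeed sends $d_{N-\ell+1}$ to $-d_{N-\ell}$, confirming your claimed sign and making the induction deliver $(-1)^{\ell}2\eta(d_{N-\ell})$ in agreement with \eqref{eq:oBbeta4}. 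With that identification spelled out, your proof is complete.
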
  
\begin{proof}
  For specialisable $c_1$ we have seen in Remark \ref{rem:q=1} that $\overline{B_1}=\eta(b_1)$ and more generally $\overline{B_{\beta_\ell}}=\eta(b_\ell)$ for $\ell=1,\dots,N-1$. Moreover, we have seen that $\overline{B_{\beta_N}}=-\eta(f_{\gamma_{N-1}})=-\eta(2d_{N-1})$.
  By Equation \eqref{eq:Bbetaj2} and the third equation of \eqref{eq:2Nin2N1} we have
  \begin{align*}
    \overline{B_{\beta_{N+1}}}=-\eta([f_{\gamma_{N-2}},f_{\gamma_{N-1}}]) =\eta(2d_{N-2}).
\end{align*}
Finally, for $2\le k \le N-2$ the element $T_N\dots T_{k+1}(F_k)$ specialises to
\begin{align*}
  [[[\dots [f_{\alpha_k},f_{\alpha_{k+1}}],&\dots],f_{\alpha_{N-1}}],f_{\alpha_N}]=(-1)^{N-k+1}[E_{N,k}-E_{2N-k+1,N+1},f_{\alpha_N}]\\
  &=(-1)^{N-k+1}(E_{N+2,k}- E_{2N-k+1,N-1})
\end{align*}
in $\sofrak(2N)$.
Hence, for $N+2\le j \le 2(N-1)$ we have
\begin{align*}
  \overline{B_{\beta_j}}&=(-1)^{N-k+1}\big([E_{N+2,k}- E_{2N-k+1,N-1}, \eta(b_{N-2})] \big)\\
  &=(-1)^{N-k+1}\eta\big([E_{N+2,k-1}- E_{2N-k+1,N-2}, E_{N-2,N} - E_{N,N+2}] \big) \\ 
  &=(-1)^{N-k+1}\eta(2d_{k-1})
\end{align*}
where $k=2N-j$. This translates into \eqref{eq:oBbeta4} for $j=(N-1)+\ell$ and $k=N+1-\ell$.
\end{proof}

\subsection{Specialisation of highest weight $\cB_\bc$-modules}\label{sec:mod-special}
Let $V$ be a highest weight $\cB_\bc$-module of highest weight $\lambda\in P_{2N-1}$ with highest weight vector $v_\lambda\in V$. We define
\begin{align*}
  V_\bA= (\cB_\bc)_\bA v_\lambda.
\end{align*}
The triangular decomposition \eqref{eq:Bc-triang-A} of $(\cB_\bc)_\bA$ implies that $V_\bA=(\cB_\bc^-)_\bA v_\lambda$. The negative part $(\cB_\bc^-)_\bA$ of $(\cB_\bc)_\bA$ is a direct sum of $P_{2N-1}$-weight spaces under the adjoint action of $U^0_X$. Moreover, the weight spaces of $(\cB_\bc^-)_\bA$ are finite-dimensional, free $\bA$-modules. Hence
\begin{align*}
  V_\bA= \bigoplus _{\mu\le \lambda} V_{\bA,\mu}
\end{align*}
is a direct sum of $P_{2N-1}$-weight spaces. The weight spaces $V_{\bA,\mu}$ are finitely generated, torsion-free and hence free $\bA$-modules. By \cite[Lemma 10.6]{a-Kolb14} we have
\begin{align}\label{eq:rk=dim}
  \rank_\bA(V_{\bA,\mu}) = \dim_\qfield(V_\mu).
\end{align}
The tensor product $V^1:=\field\ot_\bA V_\bA$ is called the specialisation of $V$ at $q=1$. For any $v\in V_\bA$ we write $\ov=1\ot v\in V^1$.  The specialisation $V^1$ is a module over the specialisation $\field \ot_\bA (\cB_\bc)_\bA=\overline{\cB_\bc}= U(\kfrak)$. By construction we have $\overline{bv}=\ob\,\ov$ for all $b\in (\cB_\bc)_\bA$ and $v\in V_\bA$. For any weight vector $v_\mu\in V_{\bA,\mu}$ and any $h\in Q^\vee_{2N-1}$ we have
\begin{align*}
  \eta(h)\overline{v_{\mu}}= \overline{ (K_{\eta(h)};0)_q}\,\overline{v_\mu}=\overline{ \frac{q^{\mu(h)}-1}{q-1}v_\mu}=\mu(h)\overline{v_\mu}.
\end{align*}
and hence $V^1$ is a $\kfrak$-weight module. Proposition \ref{prop:root-spec} implies that $V^1$ is a highest weight module. Moreover, by \eqref{eq:rk=dim} we have $\dim_\qfield(V_\mu)=\dim_\field(V^1_\mu)$ for all $\mu\in P_{2N-1}$. We summarise the discussion in the following proposition.
\begin{prop}\label{prop:V1}
  Assume that $\qfield=\field(q)$ where $\field$ is a field of characteristic $0$. Let $V$ be a highest weight $\cB_\bc$-module of highest weight $\lambda\in P_{2N-1}$. Then the specialisation $V^1$ of $V$ at $q=1$ is a highest weight $\kfrak$-module of highest weight $\lambda$ and
  \begin{align*}
     \dim_\qfield(V_{\mu})=\dim_\field(V^1_\mu) \qquad \mbox{for all $\mu\in P_{2N-1}$.}
  \end{align*}  
\end{prop}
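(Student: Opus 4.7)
The plan is to assemble the pieces that the discussion preceding the proposition has already laid out, with the triangular decomposition \eqref{eq:Bc-triang-A} doing the heavy lifting. First I would argue that $V_\bA = (\cB_\bc^-)_\bA v_\lambda$. Indeed, by the $\bA$-form triangular decomposition, every element of $(\cB_\bc)_\bA$ can be written as a sum of monomials $b^- k b^+$ with $b^-\in (\cB_\bc^-)_\bA$, $k\in (U^0_X)_\bA$, $b^+\in (\cB_\bc^+)_\bA$. Applied to $v_\lambda$, the factor $b^+$ either annihilates $v_\lambda$ or contributes a power of $q$, and $k$ acts by $q^{\lambda(\eta^{-1}(h_{\alpha_i}))}\in \bA^\times$. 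Hence only the $\cB_\bc^-$-factor contributes.

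Second, I would decompose $(\cB_\bc^-)_\bA$ under the adjoint action of $U^0_X$ into $P_{2N-1}$-weight subspaces; by the PBW-description in Theorem \ref{thm:Bc-triang-A} each such weight space is the intersection of a free $\qfield$-subspace with $(\cB_\bc)_\bA$, hence a finitely generated torsion-free and therefore free $\bA$-module of finite rank. Applying these weight spaces to $v_\lambda$ yields the direct sum decomposition
\begin{equation*}
  V_\bA = \bigoplus_{\mu\le\lambda} V_{\bA,\mu},
\end{equation*}
with each $V_{\bA,\mu}$ free of finite rank, and \cite[Lemma 10.6]{a-Kolb14} gives $\rank_\bA V_{\bA,\mu}=\dim_\qfield V_\mu$. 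Freeness is the key point, because it guarantees that base change $\field\otimes_\bA (-)$ does not collapse any weight space and preserves ranks, so that $\dim_\field V^1_\mu = \dim_\qfield V_\mu$.

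Third, I would identify $V^1$ as a $\kfrak$-module. By specialisability of $c_1$ we have $\overline{\cB_\bc}=U(\kfrak)$, so $V^1=\field\otimes_\bA V_\bA$ is a $U(\kfrak)$-module via $\overline{b}\,\overline{v}=\overline{bv}$. The weight computation
\begin{equation*}
  \eta(h)\,\overline{v_\mu} = \overline{(K_{\eta(h)};0)_q}\,\overline{v_\mu} = \overline{\frac{q^{\mu(h)}-1}{q-1}v_\mu} = \mu(h)\,\overline{v_\mu}
\end{equation*}
for $h\in Q^\vee_{2N-1}$ shows that the weight spaces of $V^1$ under $\eta(\hfrak_{2N-1})$ agree with the specialisations of the weight spaces of $V$. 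In particular, $\overline{v_\lambda}$ has $\kfrak$-weight $\lambda$ and is nonzero since $V_{\bA,\lambda}=\bA v_\lambda$ is free of rank one.

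Finally, I would verify that $\overline{v_\lambda}$ is a highest weight vector for $\kfrak$. By Proposition \ref{prop:root-spec} the positive simple root vectors of $\kfrak=\eta(\sofrak(2N-1))$ are (up to scalars) the specialisations of $E_2,\dots,E_{N-1}, E_N$ together with $B_{\beta_{N-1}}$, all of which annihilate $v_\lambda$ by the definition of the Verma-style left ideal $J_\lambda$. Hence $V^1=U(\kfrak)\overline{v_\lambda}$ is a highest weight $\kfrak$-module of highest weight $\lambda$. The main subtlety, and the step I would want to double-check carefully, is the freeness of $V_{\bA,\mu}$ over $\bA$; this is what turns the formal $\bA$-form into an honest specialisation respecting dimensions, and it is the reason the triangular decomposition \eqref{eq:Bc-triang-A} was proved in the preceding subsection.
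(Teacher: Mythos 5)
Your proposal is correct and follows essentially the same route as the paper: the same $\bA$-form $V_\bA=(\cB_\bc)_\bA v_\lambda=(\cB_\bc^-)_\bA v_\lambda$ via the triangular decomposition \eqref{eq:Bc-triang-A}, the same freeness-of-weight-spaces and rank argument using \cite[Lemma 10.6]{a-Kolb14}, the same $(K_{\eta(h)};0)_q$ weight computation, and Proposition \ref{prop:root-spec} for the highest weight property. One small slip worth noting: $\overline{E_N}$ is not a \emph{simple} root vector of $\kfrak$ (by Lemma \ref{lem:root-vectors} it has weight $\gamma_{N-2}+2\gamma_{N-1}$), but including it is harmless since $E_Nv_\lambda=0$ by the definition of $J_\lambda$ and the genuine simple root vectors $\overline{E_2},\dots,\overline{E_{N-1}},\overline{B_{\beta_{N-1}}}$ already suffice.
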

We can apply Proposition \ref{prop:V1} in particular to the simple $\cB_\bc$-module $L(\lambda)$ from Theorem \ref{thm:Llambda}. Moreover, as in \cite[Theorem 5.15]{b-Jantzen96} we can work over any field $\qfield$ of characteristic $0$ with $q\in \qfield$ transcendental over $\Q$. Recall the definition of the submodules $\widetilde{N}(\lambda)$ and $ \ttN(\lambda)$ over the Verma module $M(\lambda)$ for $\lambda\in P^+_{2N-1}$, and the corresponding quotients $\widetilde{L}(\lambda)$ and $\ttL(\lambda)$, respectively.
\begin{cor}\label{cor:Weyl}
  Assume that $\qfield$ is a field of characteristic $0$ and $q\in \qfield$ is transcendental over $\Q$. Let $\lambda\in P^+_{2N-1}$. Then $\widetilde{L}(\lambda)=\ttL(\lambda)=L(\lambda)$ and the dimension of the weight spaces $L(\lambda)_\mu$ for $\mu\in P_{2N-1}$ are given by Weyl's character formula.
\end{cor}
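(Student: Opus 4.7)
The plan is to reduce to the classical result for $\kfrak = \sofrak(2N-1)$ via the non-restricted specialisation machinery of Section \ref{sec:specialization}. Since $q$ is transcendental over $\Q$, the subfield $\Q(q) \subset \qfield$ is well-defined, and by Proposition \ref{prop:BcBd-iso} I may assume without loss of generality that $c_1 \in \Q(q)$ is specialisable. All relevant constructions, namely the algebra $\cB_\bc$, the Verma module $M(\lambda)$, and the modules $\widetilde{L}(\lambda)$, $\ttL(\lambda)$ and $L(\lambda)$, then descend to $\Q(q)$, and their $\qfield$-versions are obtained by base change, which preserves weight space dimensions. It suffices therefore to work over $\qfield = \Q(q)$, where Proposition \ref{prop:V1} applies directly.

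Applying Proposition \ref{prop:V1}, I specialise $\widetilde{L}(\lambda)$ to obtain a highest weight $U(\kfrak)$-module $\widetilde{L}(\lambda)^1$ of highest weight $\lambda$ with identical weight multiplicities. The central claim is that $\widetilde{L}(\lambda)^1$ is isomorphic to the finite-dimensional simple $\kfrak$-module $L_\kfrak(\lambda)$ of highest weight $\lambda$. To verify this, I would combine Proposition \ref{prop:root-spec} with the identifications in Equations \eqref{eq:etaxi}, \eqref{eq:eN-1}, \eqref{eq:fN-1} and the relation $f_{\gamma_{N-1}} = 2d_{N-1}$ from Section \ref{sec:rootChevalley} to check that the specialisations of $E_{i+1}, F_{i+1}$ for $i = 1,\dots,N-2$ and of $B_{\beta_{N-1}}, B_{\beta_N}$ coincide, up to nonzero scalars, with the images under $\eta$ of the Chevalley generators $e_{\gamma_i}, f_{\gamma_i}$ of $\kfrak$ for $i = 1,\dots,N-1$. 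Consequently, the defining relations $E_i v_\lambda = 0$ for $i \in X$, $B_{\beta_{N-1}} v_\lambda = 0$, $F_{i+1}^{n_i+1} v_\lambda = 0$ and $B_{\beta_N}^{n_{N-1}+1} v_\lambda = 0$ of $\widetilde{L}(\lambda)$ specialise to the classical Serre-type presentation of $L_\kfrak(\lambda)$. This identifies $\widetilde{L}(\lambda)^1$ with $L_\kfrak(\lambda)$, and in particular yields $\dim_\qfield \widetilde{L}(\lambda)_\mu = \dim_\field L_\kfrak(\lambda)_\mu$ for all $\mu \in P_{2N-1}$.

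The conclusion now follows from a squeeze argument. By Remark \ref{rem:NtNtt} we have the chain of surjections $\widetilde{L}(\lambda) \twoheadrightarrow \ttL(\lambda) \twoheadrightarrow L(\lambda)$, which gives the chain of inequalities
\begin{align*}
\dim L(\lambda)_\mu \le \dim \ttL(\lambda)_\mu \le \dim \widetilde{L}(\lambda)_\mu = \dim L_\kfrak(\lambda)_\mu
\end{align*}
for every $\mu \in P_{2N-1}$. Conversely, applying Proposition \ref{prop:V1} to $L(\lambda)$ gives a highest weight $U(\kfrak)$-module $L(\lambda)^1$ of highest weight $\lambda$ with the same weight multiplicities as $L(\lambda)$, and $L(\lambda)^1$ must surject onto $L_\kfrak(\lambda)$. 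This yields $\dim L(\lambda)_\mu \ge \dim L_\kfrak(\lambda)_\mu$, forcing equality throughout. Since all three modules are finite dimensional by Proposition \ref{prop:finite} and Theorem \ref{thm:Llambda}, the surjections $\widetilde{L}(\lambda) \twoheadrightarrow \ttL(\lambda) \twoheadrightarrow L(\lambda)$ are isomorphisms, and Weyl's character formula for $L_\kfrak(\lambda)$ (Remark \ref{rem:JantzenJacobson}) delivers the claimed weight multiplicities.

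The main technical obstacle will be the identification step: carefully tracking the scalar factors produced by Proposition \ref{prop:root-spec} (in particular the factor $-2$ appearing in $\overline{B_{\beta_N}} = -2\eta(d_{N-1}) = -\eta(f_{\gamma_{N-1}})$) and confirming that the specialised defining relations of $\widetilde{L}(\lambda)$ match precisely the classical presentation of $L_\kfrak(\lambda)$, with no extraneous relations appearing that would force $\widetilde{L}(\lambda)^1$ to be a proper quotient of $L_\kfrak(\lambda)$.
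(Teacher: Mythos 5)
Your proposal is correct and takes essentially the same route as the paper: reduce to $\Q(q)$, specialise at $q=1$ via Propositions \ref{prop:root-spec} and \ref{prop:V1}, identify the specialisation with the simple $\kfrak$-module $L_\kfrak(\lambda)$ of highest weight $\lambda$, and conclude that the surjections $\widetilde{L}(\lambda)\twoheadrightarrow\ttL(\lambda)\twoheadrightarrow L(\lambda)$ are isomorphisms with weight multiplicities given by Weyl's character formula (the paper treats all three modules simultaneously, since each is a quotient of $\widetilde{L}(\lambda)$, rather than via your squeeze, but this is a negligible variation). Your flagged ``extraneous relations'' obstacle is vacuous, since $L_\kfrak(\lambda)$ is simple and Proposition \ref{prop:V1} guarantees the specialisation is a nonzero highest weight module of highest weight $\lambda$, so the relations $f_{\gamma_i}^{n_i+1}\overline{v_\lambda}=0$ already force it to be all of $L_\kfrak(\lambda)$; note only that finite-dimensionality of $\widetilde{L}(\lambda)$ is not covered by Proposition \ref{prop:finite} or Theorem \ref{thm:Llambda}, but it follows from the weight-multiplicity equality you establish.
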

\begin{proof}
  Assume first that $\qfield=\Q(q)$. Each of $\ttL(\lambda)$ and $L(\lambda)$ are quotients of $\widetilde{L}(\lambda)=M(\lambda)/\widetilde{N}(\lambda)$. Hence the highest weight vector $v_\lambda$ in each of $\widetilde{L}(\lambda)$, $\ttL(\lambda)$ and $L(\lambda)$ satisfies $B_{\beta_N}^{n_{N-1}+1}v_\lambda=0=F_{i+1}^{n_i+1}v_\lambda$ for $i=1,\dots,N-2$   
where $n_i=\lambda(h_{\gamma_i})$ and $n_{N-1}=\lambda(h_{\gamma_{N-1}})$. This means that $f_{\gamma_i}^{n_i+1}\overline{v_\lambda}=0$ for $i=1,\dots,N-1$ in each of the three cases. Hence, each of $\widetilde{L}(\lambda)^1$, $\ttL(\lambda)^1$ and $L(\lambda)^1$ are isomorphic to the unique simple $\kfrak$-module of highest weight $\lambda$. By Proposition \ref{prop:V1} the surjective $\cB_\bc$-module homomorphisms $\widetilde{L}(\lambda)\twoheadrightarrow \ttL(\lambda)\twoheadrightarrow L(\lambda)$ are isomorphisms. The result extends from $\Q(q)$ to $\qfield$ as in \cite[Theorem 5.15]{b-Jantzen96}.
\end{proof}  
We can also use Proposition \ref{prop:V1} to show that every finite dimensional highest weight $\cB_\bc$-module is simple.
\begin{cor} \label{cor:fd-hw}
  Assume that $\qfield$ is a field of characteristic $0$ and $q\in \qfield$ is transcendental over $\Q$.
  Let $V$ be a finite dimensional highest weight $\cB_\bc$-module of highest weight $\lambda\in P^+_{2N-1}$. Then $V\cong L(\lambda)$.
\end{cor}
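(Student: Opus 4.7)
The plan is to combine the automatic surjection $V \twoheadrightarrow L(\lambda)$ with a dimension count via specialisation at $q=1$. First I would observe that by the universal property of the Verma module, $V$ is a quotient $M(\lambda)/N$ for some proper submodule $N \subseteq M(\lambda)$, so by maximality of $N(\lambda)$ there is a surjective $\cB_\bc$-homomorphism $V \twoheadrightarrow L(\lambda)$. Moreover, Theorem \ref{thm:PBWMl} shows that $M(\lambda)_\lambda = \qfield v_\lambda$ is one-dimensional, and the same therefore holds for $V_\lambda$. To finish the proof it suffices to show that $\dim_\qfield V_\mu = \dim_\qfield L(\lambda)_\mu$ for every $\mu \in P_{2N-1}$.

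Following the strategy of Corollary \ref{cor:Weyl}, I would first treat the case $\qfield = \Q(q)$ and then extend to general $\qfield$ by the base-change argument of \cite[Theorem 5.15]{b-Jantzen96}. Over $\Q(q)$ one may assume, by Proposition \ref{prop:BcBd-iso}, that $c_1$ is specialisable, so that Proposition \ref{prop:V1} applies and produces a specialisation $V^1$: a finite-dimensional highest weight $\kfrak$-module of highest weight $\lambda$ satisfying $\dim_\Q V^1_\mu = \dim_\qfield V_\mu$ for every $\mu \in P_{2N-1}$. By Remark \ref{rem:JantzenJacobson} the category of finite-dimensional $\kfrak$-modules over $\Q$ is semisimple, so $V^1$ decomposes as a direct sum of classical simple $\kfrak$-modules of dominant integral highest weights $\mu_j \le \lambda$. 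The image of $v_\lambda$ in each summand is annihilated by the positive simple root vectors of $\kfrak$ (which, by Proposition \ref{prop:root-spec} and the identifications of Section \ref{sec:symPair}, are specialisations of $E_2,\dots,E_{N-1}$ and $B_{\beta_{N-1}} \in \cB_\bc$) and has weight $\lambda$; it is therefore zero unless $\mu_j = \lambda$, and since $V^1$ is cyclic on this image only such summands can occur. Their number equals $\dim_\Q V^1_\lambda = 1$, so $V^1$ is isomorphic to the classical simple $\kfrak$-module of highest weight $\lambda$.

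Applying the same argument to $L(\lambda)$, or appealing directly to Corollary \ref{cor:Weyl}, shows that $L(\lambda)^1$ is also isomorphic to the classical simple $\kfrak$-module of highest weight $\lambda$. Combining these two isomorphisms with the dimension identity of Proposition \ref{prop:V1} yields $\dim_\qfield V_\mu = \dim_\qfield L(\lambda)_\mu$ for every $\mu$, and so the surjection $V \twoheadrightarrow L(\lambda)$ constructed in the first paragraph is an isomorphism. The main technical ingredient is the specialisation Proposition \ref{prop:V1} together with classical complete reducibility, and I do not anticipate any new obstacle beyond those already handled in the proof of Corollary \ref{cor:Weyl}.
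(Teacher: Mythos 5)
Your proposal is correct and follows essentially the same route as the paper's proof: specialise at $q=1$ via Proposition \ref{prop:V1}, identify $V^1$ (and $L(\lambda)^1$) with the classical simple $\kfrak$-module of highest weight $\lambda$, and conclude from the matching weight-space dimensions that the canonical surjection $V\twoheadrightarrow L(\lambda)$ is an isomorphism, extending from $\Q(q)$ to general $\qfield$ as in \cite[Theorem 5.15]{b-Jantzen96}. The only difference is that you spell out the classical complete-reducibility argument that the paper leaves implicit in the step $V^1\cong L(\lambda)^1$.
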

\begin{proof}
  Again we first consider the case $\qfield=\field(q)$. In this case, by Proposition \ref{prop:V1}, the specialisation $V^1$ is a finite dimensional highest weight $\kfrak$-module of highest weight $\lambda$. Hence $V^1\cong L(\lambda)^1$. Hence Proposition \ref{prop:V1} implies that $\dim_\qfield(V)=\dim_{\qfield}(L(\lambda))$ and as $L(\lambda)$ is a quotient of $V$ we get $V\cong L(\lambda)$. Again we can extend from $\Q(q)$ to a general $\qfield$ of characteristic zero.
\end{proof}
\begin{rema}\label{rem:dual}
  Recall the definition of the dual $V^\ast$ of a finite dimensional $\cB_\bc$-module $V$ from \eqref{eq:dual-def}. As $\kfrak\cong \sofrak(2N-1)$ we have $\dim(L(\lambda)_\mu^1)=\dim(L(\lambda)_{-\mu})$ for all $\mu\in P_{2N-1}$. Hence we have $L(\lambda)\cong L(\lambda)^\ast$ for all $\lambda\in P^+_{2N-1}$ as they are simple $\cB_\bc$-modules with weight spaces of the same dimension. 
\end{rema}  
With Corollary \ref{cor:fd-hw} and Remark \ref{rem:dual} we can show that every finite dimensional $\cB_\bc$-module is semisimple. This can be proved by a word by word translation of the proof of \cite[Theorem 5.17]{b-Jantzen96}.
\begin{thm}\label{thm:semisimple}
   Assume that $\qfield$ is a field of characteristic $0$ and $q\in \qfield$ is transcendental over $\Q$. Then every finite dimensional $\cB_\bc$-module is semisimple.
\end{thm}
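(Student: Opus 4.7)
The strategy will be a direct adaptation of the proof of \cite[Theorem 5.17]{b-Jantzen96} to our coideal setting. First, by Proposition \ref{prop:cat-equiv} it will suffice to prove semisimplicity within the category of $\cB_\bc$-modules of type $\mathbf 1$. I would then proceed by induction on $\dim V$, so that the inductive step reduces to splitting an arbitrary short exact sequence
\[
0 \to L(\mu) \to V \to L(\lambda) \to 0
\]
with $\lambda,\mu \in P^+_{2N-1}$ (using Theorem \ref{thm:Llambda} to classify the simple constituents).

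The main idea is to detect the splitting after specialisation at $q=1$. Using Theorem \ref{thm:Bc-triang-A} and the weight decomposition of $V$, I would construct a $(\cB_\bc)_\bA$-stable $\bA$-lattice $V_\bA \subset V$ compatible with the filtration: $V_\bA \cap L(\mu)$ is a $(\cB_\bc)_\bA$-stable $\bA$-lattice in $L(\mu)$ and the quotient $V_\bA/(V_\bA \cap L(\mu))$ is a $(\cB_\bc)_\bA$-stable $\bA$-lattice in $L(\lambda)$, all free over $\bA$. Specialising as in Section \ref{sec:mod-special} then produces a short exact sequence
\[
0 \to L_\kfrak(\mu) \to V^1 \to L_\kfrak(\lambda) \to 0
\]
of finite dimensional $U(\kfrak)$-modules, with $\kfrak \cong \sofrak(2N-1)$. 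By Remark \ref{rem:JantzenJacobson} and Weyl's complete reducibility theorem, the classical sequence splits.

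To lift the classical splitting back, I would pick a highest weight vector $\bar w \in V^1$ of weight $\lambda$ lying outside $L_\kfrak(\mu)$ and lift it to $w \in V_{\bA,\lambda}$. The obstructions $E_i w$ for $i \in X$ and $B_{\beta_{N-1}} w$ have weights strictly greater than $\lambda$ and lie in $L(\mu)$, so by the simplicity of $L(\mu)$ and a weight-space analysis they can be corrected by subtracting a suitable element of $L(\mu)_\lambda$ from $w$, yielding a genuine $\cB_\bc$-highest weight vector $\widetilde w \in V$ of weight $\lambda$ not contained in $L(\mu)$. Corollary \ref{cor:fd-hw} would then identify $\cB_\bc \widetilde w \cong L(\lambda)$, a simple submodule intersecting $L(\mu)$ trivially, and a dimension count produces the desired decomposition $V = L(\mu) \oplus \cB_\bc \widetilde w$.

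The hard part will be the correction step producing $\widetilde w$: writing down the correction amounts to solving a finite linear system whose coefficients are rational functions in $q$, and one must check that the transcendence of $q$ over $\Q$ guarantees non-vanishing of the relevant denominators, precisely as in \cite[Theorem 5.17]{b-Jantzen96}. Dualising via $\sigma^B$ (cf.\ Remark \ref{rem:dual}) may also be useful to simplify the sub- versus quotient-symmetry in this step.
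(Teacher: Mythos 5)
Your overall skeleton (reduce to extensions $0\to L(\mu)\to V\to L(\lambda)\to 0$ of simples of type $\mathbf 1$, produce a highest weight vector in $V$ of weight $\lambda$ outside $L(\mu)$, and invoke Corollary \ref{cor:fd-hw}) is the right one, but the central step of your argument has a genuine gap. The specialisation detour does not do what you want: even granting a $(\cB_\bc)_\bA$-stable free lattice in an arbitrary extension $V$ (the paper's Section \ref{sec:mod-special} only constructs lattices in highest weight modules, so this itself needs an argument), a splitting of the classical sequence at $q=1$ does not transfer to generic $q$ --- semicontinuity of $\dim\Hom$ goes the wrong way, so knowing $\Hom_{U(\kfrak)}(L_\kfrak(\lambda),V^1)\neq 0$ gives no information about $\Hom_{\cB_\bc}(L(\lambda),V)$. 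Your ``correction step'' inherits this problem: when $\lambda<\mu$ the weight spaces $L(\mu)_{\lambda+\gamma}$ for the relevant $\gamma>0$ need not vanish, and the solvability of the linear system $E_iu=E_iw$, $B_{\beta_{N-1}}u=B_{\beta_{N-1}}w$ for some $u\in L(\mu)_\lambda$ is essentially equivalent to the splitting you are trying to prove; neither ``simplicity of $L(\mu)$'' nor transcendence of $q$ gives it to you. Moreover the appeal to ``precisely as in \cite[Theorem 5.17]{b-Jantzen96}'' is a mischaracterisation: Jantzen's proof contains no lattice, no specialisation of $V$, and no correction of a lifted vector.

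What the paper does (and states as a word-by-word translation of \cite[Theorem 5.17]{b-Jantzen96}) is purely a weight argument plus duality, with specialisation entering only through the already-proved Corollary \ref{cor:fd-hw} and Remark \ref{rem:dual}. The weights of $V$ lie in $(\lambda-\sum_j\N_0\gamma_j)\cup(\mu-\sum_j\N_0\gamma_j)$. If $\lambda\not<\mu$ (including $\lambda=\mu$), then for any lift $w\in V_\lambda$ of the highest weight vector of $L(\lambda)$ the elements $E_iw$ ($i\in X$) and $B_{\beta_{N-1}}w$ lie in weight spaces of weight strictly bigger than $\lambda$, which are automatically zero; so no correction is needed, $\cB_\bc w$ is a finite dimensional highest weight module, hence isomorphic to $L(\lambda)$ by Corollary \ref{cor:fd-hw}, and it meets $L(\mu)$ trivially, giving the splitting. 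The remaining case $\lambda<\mu$ is handled by dualising with $\sigma^B$ and using $L(\nu)\cong L(\nu)^\ast$ (Remark \ref{rem:dual}) to interchange sub and quotient, which reduces it to the previous case. So duality is not an optional simplification, as you suggest at the end, but the essential mechanism replacing your unjustified correction step; with that replacement your outline becomes the paper's proof.
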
  
\begin{rema}
  We compare Theorem \ref{thm:Llambda} and the results of the present section with Watanabe's more general approach in \cite{a-Watanabe21}. By Proposition \ref{prop:BcBd-iso} we may assume that $c_1\in\bA$ is specialisable. Watanabe works over the algebraic closure $\qfield=\overline{\C(q)}$ in order to guarantee that Letzter's Cartan subalgebra acts diagonalisably, see \cite[Section 3.1]{a-Watanabe21}. In the specific case of the QSP-coideal subalgebra of type $DII$, the Cartan subalgebra is generated by the elements $K_i^{\pm 1}$ for $i\in X$ which act diagonalisably on finite-dimensional $\cB_\bc$-modules over any field. Hence, in type $DII$, Watanabe's approach also works over $\qfield=\field(q)$ for any field $\field$ of characteristic zero.
  Set
  \begin{align*}
    \cX&=\{B_{\beta_\ell}, E_{\beta_j}\,|\,j=2N-1,\dots,N(N-1), \, \ell=1,\dots,N-1\},\\
    \cY&=\{B_{\beta_\ell},\,|\, \ell=N,\dots,N(N-1)\},\\
    \cW&=\{(K_i;0)_q, K_i^{\pm 1}\,|\, i\in X\}
  \end{align*}
  and let $\cB_{\cX,\bA}$,  $\cB_{\cY,\bA}$ and $\cB_{\cW,\bA}$ be the $\cA$-subalgebras of $\cB_{\bc,\bA}$ generated by $\cX$, $\cY$ and $\cW$, respectively. The triangular decomposition \ref{eq:Bc-triang-A} implies that \begin{align*}
    \cB_{\bc,\bA}= \cB_{\cY,\bA} \cB_{\cW,\bA} \cB_{\cX,\bA}.
  \end{align*}
  This establishes \cite[Conjecture 3.3.3.(1)]{a-Watanabe21} in the case $DII$. The first statement of \cite[Conjecture 3.3.3.(2)]{a-Watanabe21} holds by Lemma \ref{lem:root-vectors}, and the second statement holds as $\overline{(K_i;0)_q}=h_{\alpha_i}$ for all $i\in X$. Finally \cite[Conjecture 3.3.3.(3)]{a-Watanabe21} holds as $\cB_{\cW,\bA}$ is commutative, and \cite[Conjecture 3.3.3.(4)]{a-Watanabe21} is a consequence of Proposition \ref{prop:root-spec}. Hence, \cite[Conjecture 3.3.3]{a-Watanabe21} holds for the example of type $DII$ and the general machinery of \cite[Section 3.3]{a-Watanabe21} can be applied.

  In particular, \cite[Corollary 3.3.10]{a-Watanabe21} provides an alternative proof that
  \begin{align*}
    \dim\big(M(\lambda)/\widetilde{N}(\lambda)\big)<\infty
  \end{align*}
and hence an alternative proof of Theorem \ref{thm:Llambda} if $\mathrm{char}(\qfield)=0$ and $q$ is transcendental over $\Q$. This proof avoids Proposition \ref{prop:FNvl} and the filtered-graded argument of Section \ref{sec:filt-grad}. However, involving specialisation, this proof does not work for fields $\qfield$ of positive characteristic or algebraic $q\in \qfield\setminus\{0\}$ which is not a root of unity. The proofs in Section \ref{sec:mod-special} are analogous to those in \cite{a-Watanabe21}.   
\end{rema}  

\begin{thebibliography}{NDS97}

\bibitem[Ara62]{a-Araki62}
S.~Araki, \emph{On root systems and an infinitesimal classification of
  irreducible symmetric spaces}, J. Math. Osaka City Univ. \textbf{13} (1962),
  1--34.

\bibitem[BG02]{b-BG02}
K.~A. Brown and K.~G. Goodearl, \emph{Lectures on algebraic quantum groups},
  Birk{\"a}user Verlag, Basel, 2002.

\bibitem[BK15]{a-BalaKolb15}
M.~Balagovi{\'c} and S.~Kolb, \emph{The bar involution for quantum symmetric
  pairs}, Represent. Theory \textbf{19} (2015), 186--210.

\bibitem[BW18]{a-BaoWang18b}
H.~Bao and W.~Wang, \emph{Canonical bases arising from quantum symmetric
  pairs}, Invent. Math. \textbf{213} (2018), 1099--1177.

\bibitem[CK90]{a-DCoKa90}
C.~De Concini and V.G. Kac, \emph{Representations of quantum groups at roots of
  {1}}, Operator algebras, unitary representations, enveloping algebras and
  invariant theory (A.~Connes, M.~Duflo, A.~Joseph, and R.~Rentschler, eds.),
  Birkh{\"a}user, 1990, pp.~471--506.

\bibitem[CP93]{a-DCPro93}
C.~De Concini and C.~Procesi, \emph{Quantum groups}, D-Modules, Representations
  Theory, and Quantum Groups (Venezia, June 1992) (G.~Zampieri and A.~D'Agnolo,
  eds.), Lecture Notes in Math., vol. 1565, Springer Verlag, Berlin, 1993,
  pp.~31--140.

\bibitem[Dix96]{b-Dixmier96}
J.~Dixmier, \emph{Enveloping algebras. {R}evised reprint of the 1977
  translation}, Grad. Stud. Math., vol.~11, Amer. Math. Soc, Providence, RI,
  1996.

\bibitem[GK91]{a-GavKlim91}
A.M. Gavrilik and A.U. Klimyk, \emph{$q$-deformed orthogonal and
  pseudo-orthogonal algebras and their representations}, Lett. Math. Phys.
  \textbf{21} (1991), 215--220.

\bibitem[GT50a]{a-GT50a}
I.M. Gelfand and M.L. Tsetlin, \emph{Finite-dimensional representations of the group of unimodular
  matrices}, Doklady Akad. Nauk SSSR (N.S.) \textbf{71} (1950), 825--828.

\bibitem[GT50b]{a-GT50b}
I.M. Gelfand and M.L. Tsetlin, \emph{Finite-dimensional representations of the
  group of orthogonal matrices}, Doklady Akad. Nauk SSSR (N.S.) \textbf{71}
  (1950), 1017--1020.

\bibitem[HK02]{b-HongKang02}
J.~Hong and S.-J. Kang, \emph{Introduction to quantum groups and crystal
  bases}, Grad. Stud. Math., vol.~42, Amer. Math. Soc., Providence, RI, 2002.

\bibitem[IK05]{a-IK05}
N.Z. Iorgov and A.U. Klimyk, \emph{Classification theorem on irreducible
  representations of the $q$-deformed algebra ${U}_q'(so_n)$}, Int. J. Math.
  Math. Sci. \textbf{2005} (2005), no.~2, 225--262.

\bibitem[Jac62]{b-Jacobson62}
N.~Jacobson, \emph{Lie algebras}, Interscience Tracts in Pure and Applied
  Mathematics, vol.~10, Interscience/Wiley, New York-London, 1962.

\bibitem[Jan96]{b-Jantzen96}
J.C. Jantzen, \emph{Lectures on quantum groups}, Grad. Stud. Math., vol.~6,
  Amer. Math. Soc, Providence, RI, 1996.

\bibitem[Jim86]{a-Jimbo3}
M.~Jimbo, \emph{Quantum {R}-matrix related to the generalized {T}oda system: an
  algebraic approach}, Field theory, quantum gravity and strings (Meudon/Paris,
  1984/1985), Lecture Notes in Phys., vol. 246, Springer Verlag, Berlin, 1986,
  pp.~335--361.

\bibitem[KL00]{b-KrauseLenagan2000}
G.~H. Krause and T.~H. Lenagan, \emph{Growth of algebras and
  {G}elfand-{K}irillov dimension}, Grad. Stud. Math., vol.~22, Amer. Math. Soc,
  Providence, RI, 2000.

\bibitem[Kol14]{a-Kolb14}
S.~Kolb, \emph{Quantum symmetric {K}ac-{M}oody pairs}, Adv. Math. \textbf{267}
  (2014), 395--469.

\bibitem[Let99]{a-Letzter99a}
G.~Letzter, \emph{Symmetric pairs for quantized enveloping algebras}, J.
  Algebra \textbf{220} (1999), 729--767.

\bibitem[Let02]{MSRI-Letzter}
G.~Letzter, \emph{Coideal subalgebras and quantum symmetric pairs}, New directions
  in {H}opf algebras (Cambridge), MSRI publications, vol.~43, Cambridge Univ.
  Press, 2002, pp.~117--166.

\bibitem[Let03]{a-Letzter03}
G.~Letzter, \emph{Quantum symmetric pairs and their zonal spherical functions},
  Transformation Groups \textbf{8} (2003), 261--292.

\bibitem[Lus94]{b-Lusztig94}
G.~Lusztig, \emph{Introduction to quantum groups}, Birkh{\"a}user, Boston,
  1994.

\bibitem[LYZ24]{a-LuYangZhang24p}
M.~Lu, R.~Yang, and W.~Zhang, \emph{{P}{B}{W} bases for $\imath$quantum
  groups}, preprint, {\ttfamily arXiv:2407.13127v1} (2024), 33 pp.

\bibitem[Mol06]{a-Molev06}
A.~Molev, \emph{Representations of the twisted quantized enveloping algebra of
  type ${C}_n$}, Moscow Math. J. \textbf{6} (2006), no.~3, 531--551.

\bibitem[NDS97]{a-NDS97}
M.~Noumi, M.S. Dijkhuizen, and T.~Sugitani, \emph{Multivariable
  {A}skey-{W}ilson polynomials and quantum complex {G}rassmannians}, AMS Fields
  Inst. Commun. \textbf{14} (1997), 167--177.

\bibitem[Nou96]{a-Noumi96}
M.~Noumi, \emph{Macdonald's symmetric polynomials as zonal spherical functions
  on some quantum homogeneous spaces}, Adv. Math. \textbf{123} (1996), 16--77.

\bibitem[NS95]{a-NS95}
M.~Noumi and T.~Sugitani, \emph{Quantum symmetric spaces and related
  $q$-orthogonal polynomials}, Group theoretical methods in physics (Singapore)
  (A.~Arima et. al., ed.), World Scientific, 1995, pp.~28--40.

\bibitem[Ste23]{phd-Stephens23}
J.H. Stephens, \emph{Finite-dimensional irreducible representations of the very
  non-standard quantum ${\mathfrak{so}}_{2N-1}$-algebra}, Ph.D. thesis, School
  of Mathematics, Statistics and Physics, Newcastle University, 2023.

\bibitem[UTS90]{a-UST90}
K.~Ueno, T.~Takebayashi, and Y.~Shibukawa, \emph{Construction of
  {G}elfand-{T}setlin basis for ${U}_q(gl(n+1))$-modules}, Publ. Res. Inst.
  Math. Sci. \textbf{26} (1990), no.~4, 667--679.

\bibitem[Wat21]{a-Watanabe21}
H.~Watanabe, \emph{Classical weight modules over $\imath$quantum groups}, J.
  Algebra \textbf{578} (2021), 241--302.

\bibitem[Wat24]{a-Watanabe24p}
H.~Watanabe, \emph{Integrable modules over quantum symmetric pair coideal
  subalgebras}, preprint, {\ttfamily arXiv:2407.07280v1} (2024), 20 pp.

\bibitem[Wen20]{a-Wenzl20}
H.~Wenzl, \emph{On representations of ${U}'_q\mathfrak{so}_n$}, Trans. Am.
  Math. Soc. \textbf{373} (2020), no.~5, 3295--3322.

\end{thebibliography}

\providecommand{\bysame}{\leavevmode\hbox to3em{\hrulefill}\thinspace}
\providecommand{\MR}{\relax\ifhmode\unskip\space\fi MR }
\providecommand{\MRhref}[2]{%
  \href{http://www.ams.org/mathscinet-getitem?mr=#1}{#2}
}
\providecommand{\href}[2]{#2}

\end{document}